\newtheorem{theorem}{Theorem}[section]
\newtheorem{conjecture}[theorem]{Conjecture}
\newtheorem{problem}[theorem]{Open problem}
\newtheorem*{metaconjecture}{Meta-conjecture}
\newtheorem{proposition}[theorem]{Proposition}
\newtheorem{lemma}[theorem]{Lemma}
\newtheorem {corollary}[theorem]{Corollary}
\theoremstyle{definition}
\newtheorem{definition}[theorem]{Definition}
\newtheorem{example}[theorem]{Example}
\newtheorem{remark}[theorem]{Remark}
\newtheorem{notation}[theorem]{Notation}
\def\N{\mathbb {N}}
\renewcommand{\skew}{\lambda/\mu}
\newcommand{\p}[1]{\mathcal #1} 
\newcommand{\set}[1]{\{ #1 \}}
\newcommand{\mf}[1]{\mathfrak{#1}}
\newcommand{\smallbinom}[2]{{\left(\begin{smallmatrix}#1 \\ #2\end{smallmatrix}\right)}}
\newcommand{\three}[3]{\genfrac{}{}{0pt}{3}{\genfrac{}{}{0pt}{3}{#1}{#2}}{#3}}
\DeclareMathOperator{\md}{mod}
\DeclareMathOperator{\dv}{div}
\DeclareMathOperator{\hgt}{ht}
\DeclareMathOperator{\Par}{Par}
\author[M.\ Konvalinka]{Matja\v z Konvalinka}\thanks{The author was partially supported by Research Program L1-­069 of the Slovenian Research Agency}
\title[Residue and quotient tables]{The role of residue and quotient tables in the theory of $k$-Schur functions}
\address{Department of Mathematics, University of
Ljubljana, Slovenia}
\keywords{$k$-Schur functions, residue tables, quotient tables, $k$-bounded partitions, cores, strong covers, weak strips, Murnaghan-Nakayama rule, Littlewood-Richardson rule}
\begin{document}

\begin{abstract}
 Recently, residue and quotient tables were defined by Fishel and the author, and were used to describe strong covers in the lattice of $k$-bounded partitions. In this paper, we show or conjecture that residue and quotient tables can be used to describe many other results in the theory of $k$-bounded partitions and $k$-Schur functions, including $k$-conjugates, weak horizontal and vertical strips, and the Murnaghan-Nakayama rule. Evidence is presented for the claim that one of the most important open questions in the theory of $k$-Schur functions, a general rule that would describe their product, can be also concisely stated in terms of residue tables.
\end{abstract}

\maketitle

\section{Introduction} \label{intro}

In 1988, Macdonald \cite{Mac95} introduced a new class of polynomials, now called \emph{Macdonald polynomials}, and conjectured that they expand positively in terms of Schur functions. This conjecture, verified in \cite{haiman}, has led to an enormous amount of work, including the development of $k$-Schur functions, defined first in \cite{llm03}; Lapointe, Lascoux, and Morse conjectured that they form a basis of a certain subspace of the space of symmetric functions and that the Macdonald polynomials indexed by partitions whose first part is not larger than $k$ expand positively in terms of $k$-Schur functions, leading to a refinement of the Macdonald conjecture. Since then, $k$-Schur functions have been found to arise in other contexts; for example, as Schubert classes in the quantum cohomology of the Grassmannian \cite{lm08}, and, more generally, in the cohomology of the affine Grassmannian \cite{lam06}.

\medskip

It turns out that $k$-Schur functions are, both technically and conceptually, a very difficult generalization of Schur functions, with many major questions either unanswered, or only conjecturally resolved; for example, there are several different and only conjecturally equivalent definitions of $k$-Schur functions (\textit{e.g.}\ the definition via atoms from \cite{llm03} and the definition via strong marked tableaux from \cite{LLMS10}). Probably the most important open problem is to find a Littlewood-Richardson rule for $k$-Schur functions, \textit{i.e.}\ a general rule for the expansion of the product of two $k$-Schur functions in terms of $k$-Schur functions.

\medskip

It is known that $k$-Schur functions (at $t = 1$) and Fomin-Gelfand-Postnikov quantum Schubert polynomials can be obtained from each other by a rational substitution (see \cite{ls}). Therefore a multiplication rule for $k$-Schur functions would also imply a multiplication rule for quantum Schubert polynomials. See also \cite[\S 2.2.5]{llmssz}.

\medskip

Recently, new tools, \emph{residue} and \emph{quotient tables}, were introduced, and it was shown that strong marked covers can be elegantly expressed in terms of them. See \cite[Theorem 5.2]{fk} and Theorem \ref{strong}. This paper hopes to convince the reader that residue and quotient tables (defined in Section \ref{residue} in a way that is slightly different than in \cite{fk}) are extremely useful in the theory of cores, $k$-bounded partitions, and $k$-Schur functions, and that they have the potential to solve many open questions, including the $k$-Littlewood-Richardson rule in full generality.

\medskip

As motivation, let us present two examples that illustrate the power of these tables.

\begin{example}
 Say that we are given the $4$-bounded partition $\lambda = 44432211111$ and we want to find all $4$-bounded partitions covered by $\lambda$ (the definitions are given in the next section). The residue table for this partition (the definition is given in Section \ref{residue}) is
 $$\begin{matrix}
 1 & 0 & 1 & 0 \\
   & 2 & 1 & 0 \\
   &   & 1 & 0 \\
   &   &   & 0
\end{matrix}$$
 According to Theorem \ref{strong}, each entry in the residue table that is strictly smaller than all the entries to its left gives us a strong cover (possibly with multiplicity $0$). Such an entry is, for example, $1$ in position $(2,3)$. This tells us that $44432211111$ covers $44441111111$ in the strong order (with multiplicity $1$, which can be computed from the quotient table).
\end{example}

This is a stunningly simple way to compute strong covers. The power of this description was recently illustrated by Lapointe and Morse, who needed it to reprove the Monk's formula for quantum Schubert polynomials; see Subsection \ref{almost}.

\medskip

The following example hints that there could be a $k$-Littlewood-Richardson rule expressible in terms of residue tables.

\begin{example} \label{exintro}
 Say that we want to compute the coefficient of $s_{\lambda \cup n}^{(k)}$ (here $\lambda \cup n$ means that we add the part $n$ to $\lambda$) in the product $s_\lambda^{(k)} s_{n-2,2}^{(k)}$ (for $n \leq k$) with $\lambda$ a $k$-bounded partition. For example, take $n = 6$. Then, according to Section \ref{kLR}, we have the following $9$ sets of conditions:
 \begin{description}
  \item[C1] $13,14,15,16,23,24,25,26$
  \item[C2] $12,14,15,16,34,35,36$
  \item[C3] $12,13,15,16,45,46$
  \item[C4] $12,13,14,56$
  \item[C5] $24,25,26,34,35,36$
  \item[C6] $23,25,26,45,46$
  \item[C7] $23,24,56$
  \item[C8] $35,36,45,46$
  \item[C9] $34,56$
 \end{description}
 The coefficient of $s_{\lambda \cup 6}^{(k)}$ in the product $s_\lambda^{(k)} s_{4,2}^{(k)}$ is (conjecturally) equal to the number of $i$, $1 \leq i \leq 9$ for which \textbf{Ci} is satisfied for the residue table $R = (r_{ij})$ of $\lambda$. Here, a condition $IJ$ for $R$ is interpreted as $r_{I6}\neq r_{J6}$. So written out in full, \textbf{C6} is
 $$r_{26} \neq r_{36} \mbox{ and } r_{26} \neq r_{56} \mbox{ and } r_{26} \neq r_{66} \mbox{ and } r_{46} \neq r_{56} \mbox{ and } r_{46} \neq r_{66}.$$
 The reader can check that the coefficient of $s_{655554442}^{(10)}$ in $s_{55554442}^{(10)}s_{42}^{(10)}$ is $4$, owing to the fact that precisely the conditions \textbf{C1}, \textbf{C6}, \textbf{C7}, \textbf{C8} are satisfied for the residue table
 $$\begin{smallmatrix}
 0 & 1 & 1 & 4 & 2 & 2 & 2 & 2 & 0 & 0 \\
   & 1 & 1 & 4 & 2 & 2 & 2 & 2 & 0 & 0 \\
   &   & 0 & 3 & 1 & 1 & 1 & 1 & 1 & 0 \\
   &   &   & 3 & 1 & 1 & 1 & 1 & 1 & 0 \\
   &   &   &   & 4 & 4 & 0 & 0 & 0 & 0 \\
   &   &   &   &   & 0 & 0 & 0 & 0 & 0 \\
   &   &   &   &   &   & 0 & 0 & 0 & 0 \\
   &   &   &   &   &   &   & 0 & 0 & 0 \\
   &   &   &   &   &   &   &   & 0 & 0 \\
   &   &   &   &   &   &   &   &   & 0 \\
\end{smallmatrix}
$$
of the $10$-bounded partition $\lambda = 55554442$.
\end{example}
Such conjectures were checked for all $k$-bounded partitions $\lambda$ for several $n$ and for many $k$. 

\medskip

Anybody who has studied $k$-Schur functions (and quantum Schubert polynomials) will agree that it is quite amazing that such simple conditions exist. Note that the conditions do not even contain $k$ explicitly (the parameter is, of course, implicit in the definition of the residue table and its flip).

\medskip

This paper is organized as follows. In Section \ref{cores}, we present one of the possible definitions of $k$-Schur functions (with parameter $t$ equal to $1$), known to be equivalent to the definition via strong marked tableaux from \cite{LLMS10}. In Section \ref{residue}, we define the {residue} and {quotient tables} of a $k$-bounded partition and list their (possible) applications. In Section \ref{s:kconj}, we present their geometric meaning, show how to compute the $k$-conjugate of a $k$-bounded partition directly in terms of the quotient table (without resorting to cores), and how to compute the size of the corresponding core. In Section \ref{strips}, we describe strong and weak covers, weak horizontal strips, and weak vertical strips in terms of residue tables. In Section \ref{application}, we show how to use this new description to prove a known multiplication result in an easier way. In Section \ref{mn}, we present a restatement of a (special case of a) known result, the Murnaghan-Nakayama rule for $k$-Schur functions, first proved in \cite{bsz}. Our version (in terms of residue tables, of course) is simpler and should serve as one of the most convincing proof of the power of the new tools. We continue with Section \ref{further}, in which we explore several other directions where residue and quotient tables could be useful. In Section \ref{kLR}, we present some conjectures about the multiplication of $k$-Schur functions. The conjectures indicate that there could be a general Littlewood-Richardson rule for $k$-Schur functions involving residue tables. Some of the technical proofs are deferred to Section \ref{proofs}.

\medskip

A reader who wishes to get a basic idea of the paper (and already knows some $k$-Schur theory) should:
\begin{itemize}
 \item read and absorb Notation \ref{notation} and Remark \ref{remark} on \ifthenelse{\equal{\pageref{notation}}{\pageref{remark}}}{page \pageref{notation}}{pages \pageref{notation} and \pageref{remark}};
 \item read Section \ref{residue};
 \item skim through Corollary \ref{kconj}, Theorem \ref{size}, Theorem \ref{strong}, Theorem \ref{whs}, Theorem \ref{wvs}, Conjecture \ref{mn1}, Conjecture \ref{mn2}, Proposition \ref{jenn}, and the corresponding examples.
\end{itemize}

\section{Cores, $k$-bounded partitions, Schur and $k$-Schur functions} \label{cores}

\subsection{Basic terminology}

A \emph{partition} is a sequence $\lambda=(\lambda_1,\ldots,\lambda_\ell)$ of weakly decreasing positive integers, called the \emph{parts} of $\lambda$. The \emph{length} of $\lambda$, $\ell(\lambda)$, is the number of parts, and the \emph{size} of $\lambda$, $|\lambda|$, is the sum of parts; write $\lambda \vdash n$ if $|\lambda| = n$, and denote by $\Par(n)$ the set of all partitions of size $n$. Denote by $m_j(\lambda)$ the number of parts of $\lambda$ equal to $j$. The \emph{Young diagram} of a partition $\lambda$ is the left-justified array of cells with $\ell(\lambda)$ rows and $\lambda_i$ cells in row $i$. (Note that we are using the English convention for drawing diagrams.) We will often refer to both the partition and the diagram of the partition by $\lambda$.  If $\lambda$ and $\mu$ are partitions, we write $\lambda \cup \mu$ for the partition satisfying $m_j(\lambda \cup \mu) = m_j(\lambda) + m_j(\mu)$ for all $j$. We write $\mu \subseteq \lambda$ if the diagram of $\mu$ is contained in the diagram of $\lambda$, \textit{i.e.}\ if $\ell(\mu)\leq\ell(\lambda)$ and $\mu_i\leq\lambda_i$ for $1\leq i\leq\ell(\mu)$. If $\mu \subseteq \lambda$, we can define the \emph{skew diagram} $\skew$ as the cells which are in the diagram of $\lambda$ but not in the diagram of $\mu$. If $\lambda$ and $\mu$ are partitions of the same size, we say that $\mu \leq \lambda$ in the \emph{dominance order} (or that $\lambda$ \emph{dominates} $\mu$) if $\mu_1 + \cdots + \mu_i \leq \lambda_1 + \cdots + \lambda_i$ for all $i$. If no two cells of $\skew$ are in the same column (resp., row), we say that $\skew$ is a \emph{horizontal} (resp., \emph{vertical}) \emph{strip}.  If the skew shape $\lambda/\mu$ is connected and contains no $2\times 2$ block, we call it a \emph{ribbon}; if it contains no $2 \times 2$ block (and is not necessarily connected), it is a \emph{broken ribbon}. (Note that in \cite{bsz}, broken ribbons are called ribbons, and ribbons are called connected ribbons.) The \emph{height} $\hgt(\lambda/\mu)$ of a ribbon is the number of rows it occupies, minus $1$, and the height of a broken ribbon is the sum of the heights of its components.

\medskip

For $1\leq i\leq \ell(\lambda)$ and $1\leq j\leq \lambda_i$, cell $(i,j)$ refers to the cell in row $i$, column $j$ of $\lambda$. The \emph{conjugate} of $\lambda$ is the partition $\lambda'$ whose diagram is obtained by reflecting the diagram of $\lambda$ about the diagonal. The \emph{$(i,j)$-hook} of a partition $\lambda$ consists of the cell $(i,j)$ of $\lambda$, all the cells to the right of it in row $i$, together with all the cells below it in column $j$. The \emph{hook length} $h_{ij}^\lambda$ is the number of cells in the $(i,j)$-hook, $h_{ij}^\lambda = \lambda_i + \lambda'_j-i-j+1$.

\medskip

Let $n$ be a positive integer. A partition $\pi$ is an \emph{$n$-core} if $h_{ij}^\pi \neq n$ for all $(i,j) \in \pi$. There is a close connection between $(k+1)$-cores and \emph{$k$-bounded partitions}, which are partitions whose parts are at most $k$ (equivalently, $\lambda = \emptyset$ or $\lambda_1 \leq k$). Indeed, in \cite{LLM05}, a simple bijection between $(k+1)$-cores and $k$-bounded partitions is presented. Given a $(k+1)$-core $\pi$, let $\lambda_i$ be the number of cells in row $i$ of $\pi$ with hook-length $\leq k$. The resulting $\lambda = (\lambda_1,\ldots,\lambda_\ell)$ is a $k$-bounded partition, we denote it by $\mf b^{(k)}(\pi)$. Conversely, given a $k$-bounded partition $\lambda$, move from the last row of $\lambda$ upwards, and in row $i$, shift the $\lambda_i$ cells of the diagram of $\lambda$ to the right until their hook-lengths are at most $k$. The resulting $(k+1)$-core is denoted by $\mf c^{(k)}(\lambda)$. For a $k$-bounded partition $\lambda$, call $\mf b^{(k)}(\mf c^{(k)}(\lambda)')$ the \emph{$k$-conjugate of $\lambda$} and denote it by $\lambda^{(k)}$. Denote the set of all $k$-bounded partitions of size $n$ by $\Par(n,k)$.

\begin{example} \label{ex:core}
 On the left-hand side of Figure \ref{fig1}, the hook-lengths of the cells of the $5$-core $\pi = 9 5 3 2 1 1$ are shown, with the ones that are $\leq 4$ underlined. That means that $\mf b^{(4)}(\pi) = 432211$.
 \ytableausetup{centertableaux}

\begin{figure}[!ht] 
\begin{center}
 \ytableausetup{boxsize=1.2em}
 \begin{ytableau}
14 & 11 & 9 & 7 &  6 & \underline 4 & \underline 3 & \underline 2 &
\underline 1 & \none & \none & \none & \none & \none & & & & & \\
9 & 6 & \underline 4 & \underline 2 & \underline 1 & \none & \none & \none &
\none & \none & \none & \none & \none  & & & & \\
6 & \underline 3 & \underline 1 & \none & \none & \none & \none& \none & \none
& \none & \none & \none & & \\
\underline 4 & \underline 1  & \none & \none & \none & \none& \none & \none &
\none & \none & \none & \none & &\\
\underline 2  & \none & \none & \none & \none & \none & \none& \none & \none &
\none & \none & \none &\\
\underline 1
\end{ytableau}

\end{center}
\caption{Bijections $\mf b^{(k)}$ and $\mf c^{(k)}$.}\label{fig1}
\end{figure}

 The right-hand side shows the construction of $\mf c^{(6)}(\lambda) = 7 5 2 2 1$ for the $6$-bounded partition $\lambda = 54221$. It follows that $54221^{(6)} = 3322211$.
\end{example}

Of particular importance are $k$-bounded partitions $\lambda$ that satisfy $m_j(\lambda) \leq k-j$ for all $j=1,\ldots,k$. We call such partitions \emph{$k$-irreducible partitions}, see \cite{llm03}. The number of $k$-irreducible partitions is clearly $k!$.

\subsection{Schur functions}

A \emph{weak composition} $\alpha = (\alpha_1,\alpha_2,\ldots)$ is a sequence of nonnegative integers, all but finitely many of them $0$; we let $|\alpha| = \sum_i \alpha_i$ denote its \emph{size}. For commutative variables $x_1,x_2,\ldots$ and a weak composition $\alpha = (\alpha_1,\alpha_2,\ldots)$, write $x^\alpha$ for $x_1^{\alpha_1}x_2^{\alpha_2} \cdots$. A \emph{homogeneous symmetric function of degree $n$} over a commutative ring $R$ with identity is a formal power series $\sum_\alpha c_\alpha x^\alpha$, where the sum ranges over all weak compositions $\alpha$ of size $n$, $c_\alpha$ is an element of $R$ for every $\alpha$, and $c_\alpha = c_\beta$ if $\beta$ is a permutation of $\alpha$. A symmetric function is a finite sum of homogeneous symmetric functions (of arbitrary degrees). Let $\Lambda^n$ denote the (finite-dimensional) vector space of symmetric functions of degree $n$ and let $\Lambda$ denote the algebra of symmetric functions (with natural operations).

\medskip

The vector space $\Lambda^n$ has several natural bases. For a partition $\lambda$, define the \emph{monomial symmetric function} $m_\lambda$ by $\sum_\alpha x^\alpha$, where the sum is over all distinct permutations $\alpha$ of $\lambda$. Define the \emph{elementary symmetric function} $e_\lambda$ as $e_{\lambda_1}\cdots e_{\lambda_\ell}$, where $e_n = m_{1^n}$. Define the \emph{complete homogeneous symmetric function} $h_\lambda$ as $h_{\lambda_1} \cdots h_{\lambda_\ell}$, where $h_n = \sum_{\tau \vdash n} m_\tau$. Define the \emph{power sum symmetric function} $p_\lambda$ as $p_{\lambda_1} \cdots p_{\lambda_\ell}$, where $p_n = m_n$. The earliest results in the theory of symmetric functions show that $\{b_\lambda: \lambda \vdash n\}$ is a basis of $\Lambda^n$, where $b$ stands for either $m$, $e$, $h$ or $p$.

\medskip

Define a \emph{semistandard Young tableau} $T$ of \emph{shape} $\lambda$ as a filling of the Young diagram of $\lambda$ with positive integers such that the entries are weakly increasing in each row and strictly increasing in each column. If the tableau $T$ has $\mu_j$ copies of the integer $j$, we call $\mu$ the \emph{weight} of $T$. In other words, a semistandard Young tableau of shape $\lambda$ and weight $\mu$ is a sequence of partitions $\lambda^{0} \subseteq \lambda^{1} \subseteq \ldots \subseteq \lambda^{m}$ such that $\lambda^{0} = \emptyset$, $\lambda^{m} = \lambda$, and $\lambda^{i}/\lambda^{i-1}$ is a horizontal strip of size $\mu_i$. For partitions $\lambda$ and $\mu$ (of the same size), define the \emph{Kostka number} $K_{\lambda\mu}$ as the number of semistandard Young tableaux of shape $\lambda$ and weight $\mu$. It is easy to see that $K_{\lambda \lambda} = 1$ and $K_{\lambda \mu} = 0$ unless $\lambda \geq \mu$. In other words, the matrix $(K_{\lambda\mu})_{\lambda,\mu \in Par(n)}$ is upper-triangular with $1$'s on the diagonal (in any linear extension of the dominance order) and hence invertible. Therefore we can define \emph{Schur functions} by
$$h_\mu = \sum_\lambda K_{\lambda\mu} s_\lambda.$$
The set $\set{s_\lambda \colon \lambda \vdash n}$ forms the most important basis of $\Lambda^n$. If $\mu \subseteq \lambda$, we can analogously define a semistandard Young tableau of shape $\skew$ and the \emph{skew Schur function} $s_{\skew}$.

\medskip

The \emph{Pieri rule} and the \emph{conjugate Pieri rule} state that
$$s_\lambda s_n = s_\lambda h_n = \sum_\nu s_\nu, \qquad s_\lambda s_{1^n} = s_\lambda e_n = \sum_\nu s_\nu$$
where the first (resp., second) sum is over all $\nu$ for which $\nu/\lambda$ is a horizontal (resp., vertical) strip of size $n$.

\medskip

The \emph{Murnaghan-Nakayama rule} states that
$$s_\lambda p_n = \sum_\nu (-1)^{\hgt(\nu/\lambda)} s_\nu,$$
where the sum is over all $\nu$ for which $\nu/\lambda$ is a ribbon of size $n$.

\begin{notation} 
 For a set $S \subseteq \set{1,2,\ldots}$ and a partition $\lambda$, denote by $\lambda^S$ the result of adding a cell to $\lambda$ in columns determined by $S$. In other words, 
$$m_j(\lambda^S) = \left\{ 
\begin{array}{ccl}
 m_j(\lambda) + 1 & \colon j \in S, j+1 \notin S \\
 m_j(\lambda) - 1 & \colon j \notin S, j+1 \in S \\
 m_j(\lambda) & \colon j \in S, j+1 \in S \\
 m_j(\lambda) & \colon j \notin S, j+1 \notin S
\end{array}\right..$$
For example, for $k = 4$, $\lambda = 44211$ and $S = \set{1,3}$, we have $\lambda^S = 443111$. Note that $\lambda^S$ is not necessarily a partition, for example when $k = 4$, $\lambda = 44211$ and $S = \set{1,4}$, we have $m_3(\lambda^S) = m_3(\lambda) -1 < 0$. See the drawings on the left in Figure \ref{fig2}. We can also extend this definition to when $S$ is multiset with $\sigma_j$ copies of $j$: then let $\lambda^S$ denote the result of adding $\sigma_j$ cells in column $j$ to $\lambda$; in other words
$$m_j(\lambda^S) = m_j(\lambda) + \sigma_j - \sigma_{j+1}.$$
For example, for $\lambda = 44211$ and $S = \set{2^2,3}$, $\lambda^S = 44322$, but when $\lambda = 44211$ and $S = \set{2^2,4}$, $\lambda^S$ is not well defined since $m_3(\lambda^S) = m_3(\lambda) + \sigma_3 - \sigma_4 = 0 + 0 - 1 < 0$. We also extend this definition to a \emph{generalized multiset} $S$, where we allow $\sigma_j < 0$; this corresponds to the case of adding cells in some columns and removing cells in others. For example, for $\lambda = 44211$ and $S = \set{2^2,4^{-1}}$, we have $\lambda^S = 43222$. See the drawings on the right in Figure \ref{fig2}. \label{notation}  
\end{notation}

\begin{figure}[!ht]
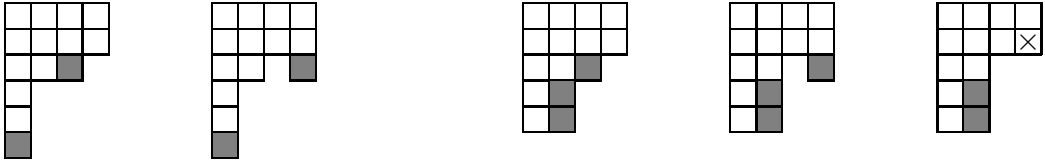
 
\begin{center}
 \ytableausetup{boxsize=0.8em}
\begin{ytableau}
{} & {} & {} & {} & \none & \none & \none & \none & & & & & \none & \none & \none & \none & \none & \none & \none & \none & {} & {} & {} & {} & \none & \none & \none & \none & & & &  & \none & \none & \none & \none & & & & \\
{} & {} & {} & {} & \none & \none & \none & \none & & & & & \none & \none & \none & \none & \none & \none & \none & \none & {} & {} & {} & {} & \none & \none & \none & \none & & & &  & \none & \none & \none & \none & & & & \times \\
{} & {} & *(gray) {} & \none & \none & \none & \none & \none & & & \none & *(gray)& \none & \none & \none & \none & \none & \none & \none & \none & {} & {} & *(gray) {} & \none & \none & \none & \none & \none & & & \none & *(gray) & \none & \none & \none & \none & & \\
{} & \none & \none & \none & \none & \none & \none & \none & & \none & \none & \none & \none & \none & \none & \none & \none & \none & \none & \none& {} & *(gray) & \none & \none & \none & \none & \none & \none & & *(gray) & \none & \none & \none & \none & \none & \none & & *(gray)\\
{} & \none & \none & \none & \none & \none & \none & \none & & \none & \none & \none & \none & \none & \none & \none & \none & \none & \none & \none&{} & *(gray) & \none & \none & \none & \none & \none & \none & & *(gray) & \none & \none & \none & \none & \none & \none & & *(gray)\\
*(gray) & \none & \none & \none & \none & \none & \none & \none & *(gray) & \none & \none & \none & \none & \none & \none & \none & \none & \none & \none & \none 
\end{ytableau}
\caption{Computing $\lambda^S$ for a set $S$. Computing $\lambda^S$ for a (generalized) multiset $S$.}\label{fig2}
\end{center}
\end{figure}

 When $\mu \subseteq \lambda$, it makes sense to define a multiset $S = \lambda - \mu$ by $\sigma_j = \lambda'_j - \mu'_j$; in particular, $\lambda^S - \lambda = S$ (when $\lambda^S$ is a partition). Even if $\mu \not\subseteq \lambda$, we can still define $S = \lambda - \mu$ by $\sigma_i = \lambda'_j - \mu'_j$ if we allow negative multiplicities in multisets.

\begin{remark}
The results mentioned above (Pieri rule, conjugate Pieri rule and Murnaghan-Nakayama rule) have something in common. For a fixed $n$, they give the expansion of the product of $s_\lambda$ with another symmetric function ($h_n$, $e_n$, or $p_n$). Furthermore, they are all stated so that they answer the following question: given a partition $\lambda$, for which $\nu \supseteq \lambda$ is the coefficient of $s_\nu$ in the product non-zero (and how to compute it)? It is trivial, but important for our purposes, to restate these results so that they answer the question: given a multiset $S$, for which $\lambda$ does $s_{\lambda^S}$ appear on the right-hand side (and with what coefficient)? The reader will easily check that for a multiset $S$ of size $n$, we have the following:
\begin{itemize}
 \item Pieri rule (horizontal strips): $s_{\lambda^S}$ appears in $s_\lambda h_n$ (equivalently: $\lambda^S/\lambda$ is a horizontal strip) if and only if $S$ is a set (\textit{i.e.}, $\sigma_j \leq 1$ for all $j$), and $m_j(\lambda) \geq \sigma_{j+1} - \sigma_j$ for $j \geq 1$;
 \item conjugate Pieri rule (vertical strips): $s_{\lambda^S}$ appears in $s_\lambda e_n$ (equivalently: $\lambda^S/\lambda$ is a vertical strip) if and only if and $m_j(\lambda) \geq \sigma_{j+1}$ for $j \geq 1$;
 \item Murnaghan-Nakayama rule (ribbons): $s_{\lambda^S}$ appears in $s_\lambda p_n$ (equivalently, $\lambda^S/\lambda$ is a ribbon) if and only if $\set{j \colon \sigma_j > 0}$ is an interval and $m_j(\lambda) = \sigma_{j+1} - 1$ if $\sigma_j > 0$, $\sigma_{j+1} > 0$, and $m_j(\lambda) > \sigma_{j+1} - 1$ if $\sigma_j = 0$, $\sigma_{j+1} > 0$; furthermore, when this is satisfied, the coefficient with which $s_{\lambda^S}$ appears in $s_\lambda p_n$ is independent of $\lambda$ and equals $(-1)^{\sum_j (\sigma_j-1)}$, where the sum is over $j$ with $\sigma_j > 0$.
\end{itemize} \label{remark}
\end{remark}

\subsection{$k$-Schur functions}

For $k$-bounded partitions $\lambda,\mu$, we say that $\lambda/\mu$ is a \emph{($k$-)weak horizontal strip} if $\lambda/\mu$ is a horizontal strip and $\lambda^{(k)}/\mu^{(k)}$ is a vertical strip. We say that \emph{$\lambda$ covers $\mu$ in the weak order} if $\lambda/\mu$ is a weak horizontal strip of size $1$. A \emph{($k$-)weak semistandard Young tableau} of shape $\lambda$ and weight $\mu$ is a sequence of partitions $\lambda^{0} \subseteq \lambda^{1} \subseteq \ldots \subseteq \lambda^{m}$ such that $\lambda^{0} = \emptyset$, $\lambda^{m} = \lambda$, and $\lambda^{i}/\lambda^{i-1}$ is a weak horizontal strip of size $\mu_i$. Define the \emph{($k$-)weak Kostka number} $K_{\lambda\mu}^{(k)}$ as the number of $k$-weak semistandard Young tableaux of shape $\lambda$ and weight $\mu$. Again, $K_{\lambda \lambda}^{(k)} = 1$ and $K_{\lambda \mu}^{(k)} = 0$ unless $\lambda \geq \mu$. In other words, the matrix $(K_{\lambda\mu}^{(k)})_{\lambda,\mu \in \Par(n,k)}$ is upper-triangular with $1$'s on the diagonal (in any linear extension of the dominance order) and hence invertible. Therefore we can define \emph{$k$-Schur functions} by

$$h_\mu = \sum_\lambda K_{\lambda\mu}^{(k)} s_\lambda^{(k)}.$$

Denote by $\Lambda^n_k$ the vector space spanned by $\set{h_\lambda \colon \lambda \in \Par(n,k)}$, and the algebra $\Lambda^0_k \oplus \Lambda^1_k \oplus \cdots$ by $\Lambda_k$. By construction, $\set{s_\lambda^{(k)} \colon \lambda \in \Par(n,k)}$ is a basis of $\Lambda^n_k$.

\medskip

By definition, $k$-Schur functions satisfy the \emph{Pieri rule}: for a $k$-bounded partition $\lambda$ and $n \leq k$, we have
\begin{equation} \label{kpieri}
s_\lambda^{(k)} h_n = \sum_\nu s_\nu^{(k)},
\end{equation}

where the sum is over $k$-bounded partitions $\nu$ for which $\nu/\lambda$ is a weak horizontal strip of size $n$.

\medskip

For $k$-bounded partitions $\lambda,\mu$, we say that $\lambda/\mu$ is a \emph{($k$-)weak vertical strip} if $\lambda/\mu$ is a vertical strip and $\lambda^{(k)}/\mu^{(k)}$ is a horizontal strip. In \cite[Theorem 33]{LM07}, the \emph{conjugate Pieri rule} is proved: for a $k$-bounded partition $\lambda$ and $n \leq k$, we have
\begin{equation} \label{kconjpieri}
s_\lambda^{(k)} e_n = \sum_\nu s_\nu^{(k)},
\end{equation}
where the sum is over $k$-bounded partitions $\nu$ for which $\nu/\lambda$ is a weak vertical strip of size $n$.

\medskip

If $k$ is large enough, then $k$-conjugates are the same as conjugates, $k$-weak horizontal strips are just horizontal strips, and $k$-weak semistandard Young tableaux are just semistandard Young tableaux. Therefore $k$-Schur functions converge to the usual Schur functions as $k$ increases. More specifically, if $\lambda$ is a $k$-bounded partition that is also a $(k+1)$-core, then $s_\lambda^{(k)} = s_\lambda$. The theory of $k$-Schur functions has been the focus of much research in the last decade. The properties of $k$-Schur functions are usually analogous to (but more complicated than) the properties of Schur functions, but they exhibit an interesting multiplicativity property that is absent in the theory of Schur functions. Namely, we have
\begin{equation} \label{eqn}
s_\lambda^{(k)} s_{l^{k+1-l}}^{(k)} = s^{(k)}_{\lambda^{(k)} \cup l^{k+1-l}}
\end{equation}
for all $l$, $1 \leq l \leq k$, a statement we give a new proof of in Section \ref{application}. Note that the partition $l^{k+1-l}$ is a $(k+1)$-core and that therefore $s_{l^{k+1-l}}^{(k)} = s_{l^{k+1-l}}$. This property enables us to write any $k$-Schur functions in terms of $k$-Schur functions corresponding to $k$-irreducible partitions. For example, we have $s_{44333222211111111111}^{(4)} = s_4^2 s_{33} s_{222} s_{1111}^2 s_{32111}^{(4)}$. 

\medskip

Finally, let us mention that $k$-Schur functions in full generality possess a parameter $t$, and our $k$-Schur functions are the result of specializing $t \to 1$ (compare with Hall-Littlewood polynomials). See \cite[\S 2]{llmssz} for more information about $k$-Schur functions.

\section{Residue and quotient tables and the meta-conjecture} \label{residue}

For a $k$-bounded partition $\lambda$, the \emph{residue table of $\lambda$} is the upper-triangular $k \times k$ matrix $R = R(\lambda) = (r_{ij})_{1 \leq i \leq j \leq k}$ defined as follows:
\begin{itemize}
 \item $r_{ii} = m_i(\lambda) \md (k+1-i)$
 \item $r_{ij} = (m_j(\lambda) + r_{i,j-1}) \md (k+1-j)$
\end{itemize}
The \emph{quotient table of $\lambda$} is the upper-triangular $k \times k$ matrix $Q = Q(\lambda) = (q_{ij})_{1 \leq i \leq j \leq k}$ defined as follows:
\begin{itemize}
 \item $q_{ii} = m_i(\lambda) \dv (k+1-i)$
 \item $q_{ij} = (m_j(\lambda) + r_{i,j-1}) \dv (k+1-j)$
\end{itemize}

\medskip

It is obvious from the construction of $R$ and $Q$ that 
\begin{align*}
 (k+1-i)q_{ii}+r_{ii} &= m_i(\lambda), \\
 (k+1-j)q_{ij} + r_{ij} &= m_j(\lambda) + r_{i,j-1}.
\end{align*}
Note that it does \emph{not} hold that $r_{ij} = (m_i(\lambda) + \cdots + m_j(\lambda)) \md (k+1-j)$ in general. 

\begin{example}
 For $k = 4$, $\lambda = 44432211111$, the residue and quotient tables are 
$$\begin{matrix}
 1 & 0 & 1 & 0 \\
   & 2 & 1 & 0 \\
   &   & 1 & 0 \\
   &   &   & 0
\end{matrix} \qquad  \mbox{and} \qquad\begin{matrix}
  1 & 1 & 0 & 4 \\
   & 0 & 1 & 4 \\
   &   & 0 & 4 \\
   &   &   & 3
\end{matrix}$$
 respectively.
\end{example}

These tables were introduced in \cite{fk} (with the role of rows and columns reversed) to describe strong covers (see \cite[Theorem 5.2]{fk} and Theorem \ref{strong}) and seem to play a very important role in the theory of $k$-bounded partitions and $k$-Schur functions.

\medskip

Indeed, the aim of this paper is to provide (further) supporting evidence for the following (admittedly vague) statement.

\begin{metaconjecture}
 (Almost) everything in the theory of $k$-bounded partitions and $k$-Schur functions can be expressed in an elegant way in terms of residue tables.
\end{metaconjecture}

In particular, we prove or conjecture the following.
\begin{itemize}
 \item In the following section, we show how to describe the $k$-conjugate of a partition via quotient tables and the size of the corresponding $(k+1)$-core in terms of residue and quotient tables.
 \item In Section \ref{strips}, we describe strong and weak covers, as well as weak horizontal and vertical strips, in terms of residue and quotient tables, and use this to restate the Pieri rule for $k$-Schur functions.
 \item In Section \ref{application}, we use the description of weak horizontal strips to reprove equation \eqref{eqn} in a simpler way.
 \item It seems that the (very complicated) definition of a \emph{$k$-ribbon} from \cite{bsz} has a better description in terms of residue tables, see Section \ref{mn}.
 \item Multiplication of $s_\lambda^{(k)}$ with $s_{l^{k-l},l-1}$  yields a sum of the form $\sum_\nu s_{\nu}^{(k)}$, with a simple condition on the residue table of $\lambda$ determining which $\nu$'s appear in the sum; this can be used to reprove the Monk's formula for quantum Schubert polynomials. See Subsection \ref{almost}.
 \item The concept of \emph{splitting} of a $k$-bounded partition has a description with residue tables, and it seems plausible that this could be used to give a more elementary proof of a theorem due to Denton (\cite[Theorem 1.1]{denton}), see Subsection \ref{split}.
 \item At least one special case of LLMS insertion for standard tableaux (see \cite[\S 10.4]{LLMS10}) can be described in terms of residue tables, see Subsection \ref{llms}.
 \item Finally, there is ample evidence that one of the major unsolved problems in the theory of $k$-Schur functions, a description of $k$-Littlewood-Richardson coefficients, is possible via residue tables; see Section \ref{kLR}.
\end{itemize}

The author was unable to (conjecturally) describe the expansion of $k$-Schur functions in terms of Schur functions, or the expansion of a $k$-Schur function in terms of $(k+1)$-Schur functions (which we call \emph{$k$-branching}), via the residue and quotient tables, and these could be putting the ``almost'' into the Meta-conjecture. Note that there is a conjectural expansion of $k$-Schur functions in terms of Schur functions using atoms, see \cite{llm03} (alternatively, if one takes the definition of $k$-Schur functions via atoms, then the definition above is a conjecture), and $k$-branching in solved by \cite[Theorem 2]{kbranching}.

\section{Computing the $k$-conjugate and the size of the $(k+1)$-core} \label{s:kconj}

The reader might be wondering whether the entries in the residue and quotient tables have a specific meaning. The following should answer that question.

\begin{proposition} \label{meaning}
 For a $k$-bounded partition $\lambda$ and $j$, $1 \leq j \leq k$, denote the partition $\langle 1^{m_1(\lambda)},\ldots,j^{m_j(\lambda)} \rangle$ by $\lambda_{(j)}$, and denote the residue and quotient tables of $\lambda$ by $R$ and $Q$, respectively. Then for $i \leq j$, the number of parts of $\lambda_{(j)}^{(k)}$ equal to $k + 1 - i$ is the sum of the entries in column $i$ of $Q$, and the parts of $\lambda_{(j)}^{(k)}$ that are at most $k - j$ are precisely the non-zero entries in column $j$ of $R$.
\end{proposition}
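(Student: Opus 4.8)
The plan is to induct on $j$, tracking how the core $\mf c^{(k)}(\lambda_{(j)})$, equivalently its conjugate, evolves as we increase $j$ from $0$ to $k$. For $j = 0$ the partition $\lambda_{(0)}$ is empty, all tables are empty, and the statement is vacuous. Passing from $\lambda_{(j-1)}$ to $\lambda_{(j)}$ means adjoining $m_j(\lambda)$ copies of the part $j$. On the $k$-conjugate side, one copy of the part $j$ corresponds (via the bijections $\mf b^{(k)}, \mf c^{(k)}$) to a weak vertical strip of size $j$ in the $k$-conjugate — this is exactly the conjugate Pieri rule \eqref{kconjpieri} read backwards, together with equation \eqref{eqn} for the "saturated" part. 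The first thing I would do is make precise the following local rule: if $\mu$ is a $k$-bounded partition and we know which parts of $\mu^{(k)}$ are $\le k-(j-1)$, then adding a single part $j$ adds one cell to column $j$ of the $k$-conjugate, which either creates a new part equal to $1$ (if no part of $\mu^{(k)}$ is currently $< k+1-j$, i.e. all the "small" columns are full) or increments the smallest part that is $\le k-j$. Adding $(k+1-j)$ copies of $j$ cycles this process once completely and, by \eqref{eqn}, contributes a full column $l^{k+1-l}$ with $l = k+1-j$, i.e. it adds one part equal to $k+1-j$ and resets the small parts.

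With that local rule in hand, the induction is essentially bookkeeping matching the Euclidean-division recursion defining $R$ and $Q$. I would argue by downward induction on $i$ (from $i = j$ down to $i = 1$): the entry $r_{i,j-1}$ records the "carry" — the number of parts of $\lambda_{(j-1)}^{(k)}$ equal to $k+1-i$ that are still below the saturation threshold relevant to column $i$ — and the defining relation $(k+1-j)q_{ij} + r_{ij} = m_j(\lambda) + r_{i,j-1}$ says exactly that, after absorbing $m_j(\lambda)$ new cells together with the carry $r_{i,j-1}$ from column $j-1$, we complete $q_{ij}$ full cycles (each contributing one part $k+1-i$ via \eqref{eqn}) and leave a residual $r_{ij} < k+1-j$. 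Summing $q_{ij}$ over all $j \ge i$ telescopes to the total number of parts of $\lambda_{(k)}^{(k)} = \lambda^{(k)}$ equal to $k+1-i$ restricted appropriately — but stopping the sum at a fixed $j$ gives precisely column $i$ of $Q$ truncated, which is the count for $\lambda_{(j)}^{(k)}$. Simultaneously, the residuals $r_{ij}$ for varying $i$ (at fixed $j$) are exactly the multiplicities, strictly below $k-j$, of the parts of $\lambda_{(j)}^{(k)}$ that have not yet saturated — i.e. the nonzero entries of column $j$ of $R$ list those small parts, matching the second claim.

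The main obstacle — and the step I would spend the most care on — is the precise form of the local rule, namely verifying that adjoining copies of the part $j$ really does act on the $k$-conjugate columnwise in the claimed cyclic fashion, without interference between columns of different heights. This is where \eqref{eqn} and the conjugate Pieri rule \eqref{kconjpieri} must be combined carefully: \eqref{eqn} handles the "saturated" blocks $(k+1-j)^{j}$ cleanly, and what remains is a single weak vertical strip that one must show lands in column $j$ of the $k$-conjugate and affects only the parts $\le k-j$. I expect this to follow from the explicit description of $\mf c^{(k)}$ on a partition all of whose parts are $\le j$ (its core has a predictable shape), but pinning down that the residues $r_{i,j-1}$ transfer correctly as carries — rather than, say, the raw partial sums $m_i + \cdots + m_{j-1}$, which the paper explicitly warns are \emph{not} equal to $r_{i,j-1}$ — is the subtle point, and I would isolate it as a separate lemma before running the induction.
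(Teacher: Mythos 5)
Your outline — induct on $j$, then on $m_j(\lambda)$, and track how the core $\mf c^{(k)}(\lambda_{(j)})$ (equivalently, the $k$-conjugate) evolves one added part at a time — is exactly the structure of the paper's proof. But you have not actually proved the proposition: you explicitly defer the entire substance to an un-proved ``local rule,'' and that local rule \emph{is} the proof. The paper's argument consists of (i) a short geometric lemma showing that when all parts of $\mu$ are at most $j$, the columns of $\mf c^{(k)}(\mu)$ of length $\leq k+1-j$ are top-justified (this is a one-line hook-length computation: a cell of hook length $\geq k+2$ sitting above $i'$ cells and left of $i''$ cells of hook length $\leq k$ forces $i'+i'' \geq k+1$), and (ii) the immediate consequence that appending a new row of length $j$ leaves columns of length $\geq k+1-j$ untouched, pushes columns of length $k-j$ up to $k+1-j$, and lengthens all shorter columns by one cell — which is precisely the mod-$(k+1-j)$ carry in the definition of $R$ and $Q$. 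You gesture at this cyclic behavior but never establish it.

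Worse, the route you propose for establishing it — reading off the conjugate Pieri rule \eqref{kconjpieri} ``backwards'' and invoking the $k$-rectangle formula \eqref{eqn} — is both indirect and, in the logical architecture of this paper, circular. The conjugate Pieri rule and \eqref{eqn} are statements about products of $k$-Schur functions; they tell you that a weak vertical strip is added to $\lambda^{(k)}$, but they do not tell you \emph{which columns} the strip occupies or that the effect is the cyclic ``increment the smallest short column, or create a new column'' rule you need. Extracting that columnwise description from a symmetric-function identity would require essentially the same geometric work as the direct argument. And within this paper, Theorem \ref{whs}, Corollary \ref{pieri}, and the new proof of \eqref{eqn} all rest on Corollary \ref{kconj}, which is a corollary of Proposition \ref{meaning} — so appealing to them here would be circular. (You could cite \eqref{eqn} from the literature, but the circularity concern for the Pieri-type statements and the insufficiency of a product formula to pin down the columnwise dynamics both remain.) The fix is to drop the symmetric-function machinery entirely and prove the local rule directly from the definition of $\mf c^{(k)}$, as the paper does.
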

\begin{proof}

  We prove the statement by induction on $j$. There is only one entry in column $1$ of $R$, $m_1(\lambda) \md k$, and only one entry in column $1$ of $Q$, $m_1(\lambda) \dv k$. On the other hand, $\lambda_{(1)} = \langle 1^{m_1(\lambda)}\rangle$, and the $(k+1)$-core corresponding to this $k$-bounded partition has $k$ copies of part $i$ for $i = 1,\ldots,m_1(\lambda) \dv k$, and $m_1(\lambda) \md k$ copies of part $(m_1(\lambda) \dv k) + 1$. For example, for $k = 3$ and $m_1(\lambda) = 8$, we get
 $\: \:\ytableausetup{boxsize=0.3em}
\begin{ytableau}
  \none & \none &  \\
\none & \none & \\
 \none & \\
 \none & \\ 
 \none & \\
 {} \\
  {}\\
  {}
 \end{ytableau}$.\\

That means that $\lambda_{(1)}^{(k)}$ contains $m_1(\lambda) \dv k$ copies of $k$, and one copy of $m_1(\lambda) \md k$. This proves the statement for $j = 1$.\\
 Assume that the statement holds for $j-1$; \textit{i.e.}, the non-zero entries of column $j-1$ of $R$ are the parts of $\lambda_{(j-1)}^{(k)}$ that are at most $k + 1 - j$, and for $i \leq j-1$, the number of parts of $\lambda_{(j-1)}^{(k)}$ equal to $k + 1 - i$ is the sum of the entries in column $i$ of $Q$. By the construction of $\mf c^{(k)}$, the parts of $\lambda_{(j-1)}^{(k)}$ that are at most $k + 1 - j$ are top-justified; indeed, if a cell of the diagram of $\mf c^{(k)}(\lambda_{(j-1)})$ with hook-length $\geq k+2$ lies immediately above $i'$ cells with hook-length $\leq k$ and immediately to the left of $i''$ cells with hook-length $\leq k$, then $i' + i'' \geq k + 1$, so $i'' \leq j-1$ implies that $i' \geq k + 2 - j$.\\
 Now prove the statement by induction on $m = m_j(\lambda) = m_j(\lambda_{(j)})$. If $\lambda$ contains no parts equal to $j$, then $\lambda_{(j-1)}^{(k)} = \lambda_{(j)}^{(k)}$. The parts of $\lambda_{(j)}^{(k)}$ that are at most $k - j$ are precisely the parts of $\lambda_{(j-1)}^{(k)}$ that are at most $k + 1 - j$ and that are not equal to $k + 1 - j$, by induction, these are the elements $< k + 1 - j$ in column $j-1$ of $R$. On the other hand, the elements of column $j$ of $R$ are $r_{jj} = m_j(\lambda) \md (k + 1 - j) = 0$ and $r_{ij}=r_{i,j-1} \md (k + 1 - j)$, which is $r_{i,j-1}$ if $r_{i,j-1} < k + 1 - j$ and $0$ if $r_{i,j-1} = 0$. Therefore the non-zero elements of column $j$ are precisely the non-zero elements of column $j-1$ that are not equal to $k + 1 - j$. Furthermore, the number of parts of $\lambda_{(j)}^{(k)}$ equal to $k + 1 - i$ for $i < j$ is by induction equal to the sum of the entries in column $i$ of $Q$, and the number of parts of $\lambda_{(j)}^{(k)}$ equal to $k + 1 - j$ is, again by induction, the number of entries of column $j-1$ of $R$ equal to $k+1-j$. But since $q_{jj} = m_j(\lambda) \dv (k + 1 - j) = 0$ and $q_{ij}=r_{i,j-1} \md (k + 1 - j)$, $q_{ij} = 0$ if $r_{i,j-1} < k+1-j$ and $q_{ij} = 1$ if $r_{i,j-1} = k+1-j$. Therefore the number of parts of $\lambda_{(j)}^{(k)}$ equal to $k + 1 - j$ is the sum of the column $j$ of $Q$. This concludes the proof of the base of (inner) induction.\\
 Now assume that $m > 0$ and that the statement holds for $\mu$ which has $m_j(\mu) = m - 1$, $m_i(\mu) = m_i(\lambda)$ for $i < j$, $m_i(\mu) = 0$ for $i > j$, and residue and quotient tables $R'$ and $Q'$. The first $j-1$ columns of $R'$ and $Q'$ are the same as of $R$ and $Q$. If $r'_{ij} < k-j$, then $r_{ij} = r'_{ij}+1$ and $q_{ij} = q'_{ij}$, and if $r'_{ij} = k - j$, then $r_{ij} = 0$ and $q_{ij} = q'_{ij}+1$. But when we add a new row of length $j$ to the core $\mf c^{(k)}(\mu)$, the columns of length $\geq k+1-j$ are unchanged, the columns of length $k-j$ are changed to columns of length $k + 1 -j$, and the columns of length $< k - j$ get one extra cell. This means that the columns of length $\leq k-j$ are the non-zero entries of column $j$ of $R$, and the number of columns of length $k+1-j$ is the sum of column $j$ of $Q$.
\end{proof}

\begin{example}
 Take $k = 4$ and $\lambda = 44432211111$. Then $\lambda_{(1)} = 11111$, $\lambda_{(2)} = 2211111$, $\lambda_{(3)} = 32211111$, $\lambda_{(4)} = 44432211111$, and their $k$-conjugates are $41$, $432$, $432111$, $432111111111111111$. The parts of $\lambda_{(j)}^{(4)}$ that are at most $4-j$ are $1$, $2$, $111$, none for $j = 1,2,3,4$, and these are precisely the non-zero entries of the columns of $\begin{smallmatrix}
 1 & 0 & 1 & 0 \\
   & 2 & 1 & 0 \\
   &   & 1 & 0 \\
   &   &   & 0
\end{smallmatrix}$. Furthermore, the sums of columns of $\begin{smallmatrix}
  1 & 1 & 0 & 4 \\
   & 0 & 1 & 4 \\
   &   & 0 & 4 \\
   &   &   & 3
\end{smallmatrix}$ are $1$, $1$, $1$, $15$, and indeed $\lambda_{(j)}^{(4)}$ has $1$ part equal to $4$ for $j=1,2,3,4$, $\lambda_{(j)}^{(4)}$ has $1$ part equal to $3$ for $j=2,3,4$, $\lambda_{(j)}^{(4)}$ has $1$ part equal to $2$ for $j=3,4$, and $\lambda_{(j)}^{(4)}$ has $15$ parts equal to $1$ for $j=4$.
\end{example}

\begin{remark}
 The proposition tells us that the residue and quotient tables essentially describe the bijection $\mf c^{(k)}$, the process of turning the given $k$-bounded partition into a $(k+1)$-core. The first column of the residue and quotient tables describe what happens after we add the $1$'s, the second column of the residue table and the first two columns of the quotient table describe what happens when we add the $1$'s and the $2$'s, etc. \\
 However, the residue and quotient tables give \emph{more} information than that. Namely, instead of just giving the sizes of the columns of $\lambda_{(j)}^{(k)}$, they also give a special ordering of the sizes. Indeed, one could say that cores are ``incomplete'' descriptions of $k$-bounded partitions since the order of the entries of the columns is lost. In all descriptions of results on $k$-bounded partitions and $k$-Schur functions to follow, the exact position in a column of the residue or quotient table plays a crucial role.
\end{remark}

\medskip

Since $\lambda_{(k)} = \lambda$, we have the following result.

\begin{corollary} \label{kconj}
 For a $k$-bounded partition $\lambda$ with quotient table $Q$, we have 
 $$m_j(\lambda^{(k)}) = \sum_{i=1}^{k+1-j} q_{i,k+1-j}. \eqno \qed$$
\end{corollary}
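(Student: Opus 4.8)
The plan is to obtain Corollary \ref{kconj} as the special case $j' = k$ of Proposition \ref{meaning}, after a harmless re-indexing. Proposition \ref{meaning} is stated with the convention that column $i$ of the quotient table records parts of size $k+1-i$; Corollary \ref{kconj} is stated in the complementary indexing, recording $m_j$ of the $k$-conjugate. So the first step is simply to match the two indexings: a part of $\lambda^{(k)}$ equal to $j$ is, in the language of Proposition \ref{meaning}, a part equal to $k+1-i$ with $i = k+1-j$. Thus $m_j(\lambda^{(k)})$ equals the number of parts of $\lambda^{(k)}$ equal to $k+1-i$ for this value of $i$.

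Next I would invoke the observation, recorded in the excerpt just before the statement, that $\lambda_{(k)} = \lambda$: since $\lambda$ is $k$-bounded, all its parts lie in $\{1,\dots,k\}$, so the truncation $\lambda_{(k)} = \langle 1^{m_1(\lambda)}, \dots, k^{m_k(\lambda)}\rangle$ is just $\lambda$ itself, and hence $\lambda_{(k)}^{(k)} = \lambda^{(k)}$. Applying Proposition \ref{meaning} with $j = k$ (so that the hypothesis $i \le j$ becomes $i \le k$, i.e.\ no restriction beyond $i$ indexing a column of $Q$), the number of parts of $\lambda_{(k)}^{(k)} = \lambda^{(k)}$ equal to $k+1-i$ is the sum of the entries in column $i$ of $Q$, namely $\sum_{t=1}^{i} q_{t,i}$, where the sum runs over the rows $t$ with $t \le i$ that actually contain an entry in column $i$ of the upper-triangular matrix $Q$.

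Finally I would substitute $i = k+1-j$ into this formula to conclude
$$m_j(\lambda^{(k)}) = \sum_{t=1}^{k+1-j} q_{t,k+1-j},$$
which is exactly the claimed identity (with the summation index renamed $i$). Since every ingredient — the indexing translation, the identity $\lambda_{(k)} = \lambda$, and the column-sum statement — is either immediate or already proved, there is really no obstacle here; the only thing to be careful about is the direction of the indexing convention (column $i \leftrightarrow$ part $k+1-i$), which is precisely where a reader could get confused, so the write-up should make that correspondence explicit. In fact the corollary is little more than a restatement of Proposition \ref{meaning} in the coordinates that are most convenient for the applications in Section \ref{strips}, which is why the excerpt presents it as an immediate consequence.
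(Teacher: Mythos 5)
Your proposal is correct and matches the paper's approach exactly: the paper marks the corollary with an immediate $\qed$ after remarking "Since $\lambda_{(k)} = \lambda$, we have the following result," i.e.\ it is obtained by taking $j = k$ in Proposition \ref{meaning} and re-indexing column $i \leftrightarrow$ part $k+1-i$, which is precisely what you do.
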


\begin{example}
 Take $k = 4$ and $\lambda = 44432211111$ as in the previous example. Then the sums of columns of $\begin{smallmatrix}
  1 & 1 & 0 & 4 \\
   & 0 & 1 & 4 \\
   &   & 0 & 4 \\
   &   &   & 3
\end{smallmatrix}$ are $1$, $1$, $1$, $15$, \textit{i.e.}\ $44432211111^{(4)} = 432111111111111111$.
\end{example}

When we add a new row of length $j$ in the construction of $\mf c^{(k)}(\lambda)$, the number of new cells with hook-length $> k$ is equal to the number of columns of length $>k-j$, and we know that these are enumerated by the sum of the first $j$ columns of the quotient table. Therefore it should not come as a surprise that the residue and quotient tables can also be used to compute the number of cells of $\mf c^{(k)}(\lambda)$ with hook-length $>k$. Let us remark that since $|\lambda| = \sum_{j=1}^k j m_j(\lambda) = \sum_{j=1}^k j(r_{jj} + (k+1-j) q_{jj})$, we could instead give a formula for size of the $(k+1)$-core corresponding to a $k$-bounded partition.

\begin{theorem} \label{size}
 For a $k$-bounded partition $\lambda$ with residue table $R$ and quotient table $Q$, we have
 $$|\mf c^{(k)}(\lambda)| - |\lambda| = \!\!\!\! \sum_{1 \leq i \leq j \leq k}\!\!\!\! r_{ij} q_{ij}+ \!\!\!\!\sum_{1 \leq i \leq j < h \leq k}\!\!\!\! r_{hh}q_{ij} + \!\!\!\!\sum_{1 \leq i \leq j \leq h \leq k}\!\!\!\! (k+1-h)q_{hh}q_{ij} - \!\!\sum_{1 \leq i \leq k}\!\! i (k+1-i) \left(\begin{smallmatrix}q_{ii}+1 \\ 2\end{smallmatrix}\right) .$$
\end{theorem}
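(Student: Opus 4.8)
\medskip

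The plan is to track the quantity $|\mf c^{(k)}(\lambda)| - |\lambda|$, which counts exactly the cells of the $(k+1)$-core $\mf c^{(k)}(\lambda)$ whose hook length exceeds $k$. I would build up $\lambda$ one part at a time, in the order dictated by the construction of $\lambda_{(j)}$ from Proposition \ref{meaning}: first all the $1$'s, then all the $2$'s, and so on, and within the block of $j$'s, add rows one at a time. So the master identity to prove is a telescoping sum: if $N(\mu) := |\mf c^{(k)}(\mu)| - |\mu|$, then $N(\lambda) = \sum_{j=1}^k \sum_{t=1}^{m_j(\lambda)} \bigl( N(\mu_{j,t}) - N(\mu_{j,t-1}) \bigr)$, where $\mu_{j,t}$ has $m_i = m_i(\lambda)$ for $i<j$, $m_j = t$, and $m_i = 0$ for $i>j$. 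The key is that adding one row of length $j$ to a core, by the argument already given in the proof of Proposition \ref{meaning}, produces new cells with hook length $>k$ precisely one for each column of the current core of length $>k-j$ — and that number of columns is, by Proposition \ref{meaning}, the sum of the first $j$ columns of the quotient table of the partition built so far. Thus each single-row step contributes an amount I can read off the quotient and residue tables.

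\medskip

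Concretely, I would fix $j$ and analyze the passage from $m_j = t-1$ to $m_j = t$. The first $j-1$ columns of $R$ and $Q$ are already frozen at their final values (they depend only on $m_1,\dots,m_{j-1}$); within column $j$, as $t$ runs from $0$ to $m_j(\lambda)$, the entry $r_{ij}$ cycles through $r_{i,j-1}, r_{i,j-1}+1, \dots$ modulo $k+1-j$ and $q_{ij}$ increments by $1$ each time $r_{ij}$ wraps around, exactly as in the inner induction of Proposition \ref{meaning}; similarly the diagonal pair $(r_{jj}, q_{jj})$ is governed by $m_j \bmod (k+1-j)$ and $m_j \dv (k+1-j)$. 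The number of columns of length $>k-j$ in the core of $\mu_{j,t-1}$ is $\sum_{i \le j}$ (sum of column $i$ of the quotient table of $\mu_{j,t-1}$), which — summing the single-row contributions over $t = 1,\dots,m_j(\lambda)$ and over $j$ — I would reorganize into: a "within-column-$j$" piece from the $t$ dependence of column $j$ itself, contributing roughly $j \cdot (\text{triangular-number-like term in } q_{jj}, r_{jj})$; and a "carry-in from earlier columns" piece, where each frozen entry $q_{i\ell}$ with $\ell < j$ gets multiplied by the number of new $j$-rows, i.e. by $m_j(\lambda) = (k+1-j)q_{jj} + r_{jj}$. Collecting these over all $j$ and all pairs $i \le \ell$ with $\ell < j \le k$ is what produces the two cross terms $\sum r_{hh} q_{ij}$ and $\sum (k+1-h) q_{hh} q_{ij}$, while the diagonal self-interaction of column $j$ produces both the $\sum r_{ij} q_{ij}$ term (at $i=j$) and, after summing a series $0 + (k+1-j) + 2(k+1-j) + \cdots$, the correction term $-\sum i(k+1-i)\binom{q_{ii}+1}{2}$; the leading factor $i$ (rather than $j$) in that last term and in the book-keeping appears because each new cell counted sits at the end of a row, and the hooks it is responsible for distribute across the earlier part-values in a way that the proof of Proposition \ref{meaning} already makes precise.

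\medskip

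The main obstacle, and where I expect to spend real effort, is the exact bookkeeping in the last paragraph: getting the ranges of summation and the coefficients to match the stated formula, in particular correctly accounting for why adding the $t$-th row of length $j$ interacts with the already-present columns with the weight $(k+1-h)q_{hh} + r_{hh} = m_h(\lambda)$ for each $h < j$ (equivalently, keeping straight that the "number of rows added" at stage $j$ is $m_j(\lambda)$ but the "number of columns of relevant length" is a partial sum over quotient-table columns $1,\dots,j$), and why the arithmetic-series correction comes out as $i(k+1-i)\binom{q_{ii}+1}{2}$ with the diagonal index $i$. All of the needed structural facts — that single-row insertion adds one $>k$-cell per column of length $>k-j$, that these column lengths are governed by partial column sums of $Q$, that columns of length $\le k-j$ are the nonzero entries of column $j$ of $R$ and are top-justified — are already established in the proof of Proposition \ref{meaning}, so the remaining work is purely the combinatorial algebra of summing the telescoped contributions and recognizing the four displayed terms. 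A careful induction on $k$ (or a direct double induction on $j$ and then on $m_j$, mirroring Proposition \ref{meaning}) should close it; I would present it as the latter, checking the base $j=1$ by hand against the explicit $(k+1)$-core of $\langle 1^{m_1(\lambda)} \rangle$ and then verifying that each step $t-1 \to t$ changes both sides of the claimed identity by the same amount.
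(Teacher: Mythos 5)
Your strategy is exactly the paper's: a double induction that builds $\lambda$ from $\lambda_{(0)}=\emptyset$ by adjoining rows of increasing length one at a time, and at each single-row step counts the new cells of hook length $>k$ as the number $C$ of columns of the current core of length $>k-j$, which Proposition~\ref{meaning} identifies with the sum of the first $j$ columns of the (current) quotient table. So you have converged to the paper's proof plan and have correctly named all the structural ingredients it relies on.

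The genuine gap is that the algebraic verification you describe as ``purely the combinatorial algebra'' and ``exact bookkeeping'' \emph{is} the proof, and you have not carried it out. Moreover, the heuristic you give for where the four displayed terms come from does not survive scrutiny as stated. You attribute $\sum r_{ij}q_{ij}$ to the ``diagonal self-interaction'' of column $j$, but that sum has off-diagonal entries $i<j$ which must also be produced; and indeed the off-diagonal entries $q_{ij}(\mu_{j,t-1})=(t-1+r_{i,j-1})\dv(k+1-j)$ evolve with $t$ just as the diagonal ones do, only with a shift by $r_{i,j-1}$, so the ``within-column-$j$'' piece is not a single arithmetic series but $j$ shifted ones whose contributions distribute nontrivially among $\sum r_{ij}q_{ij}$, the $j=h$ part of $\sum(k+1-h)q_{hh}q_{ij}$, and $-\sum i(k+1-i)\binom{q_{ii}+1}{2}$. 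Your ``carry-in'' heuristic correctly yields the two cross terms with $j<h$, but the remaining terms (first sum, the $j=h$ diagonal of the third, and the fourth) cannot be read off without the detailed computation. What the paper actually does at each inner step $m_J\mapsto m_J+1$ is verify an identity $B+C=A$ by splitting into cases according to the set $S$ of rows $i$ with $r_{iJ}=0$ (i.e.\ whether or not the new row causes a wrap-around in position $(i,J)$), with $J\in S$ and $J\notin S$ treated separately; your proposal does not engage with this case analysis at all. Until that verification is executed, the proof is a promissory note with the correct plan but the central content missing.
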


\begin{example}
 For $k = 4$ and $\lambda = 44432211111$, the theorem yields $|\mf c^{(4)}(\lambda)| - |\lambda| = [1 \cdot 1 + 1 \cdot 1]+[2 \cdot 1 + 1 \cdot 2] + [4 \cdot 1 \cdot 1+1 \cdot 3 \cdot 18] - [1 \cdot 4 \cdot \binom 22 + 4 \cdot 1 \cdot \binom 4 2] = 36$, as confirmed by the computation $\mf c^{(4)}(\lambda) = (18)(14)(10)63321111$, $|\mf c^{(4)}(\lambda)| - |\lambda| = 60 - 24 = 36$.\\
 As a more general example, the theorem is saying that for a $3$-bounded partition $\lambda$ with residue table $R$ and quotient table $Q$,
 \begin{align*}
 |\mf c^{(k)}(\lambda)| \! - \!|\lambda| &= \! \left[ r_{11}q_{11} + r_{12}q_{12} + r_{13}q_{13} + r_{22}q_{22} + r_{23}q_{23} + r_{33}q_{33} \right] \! + \! [r_{22}q_{11} + r_{33}(q_{11}+q_{12}+q_{22})]\\
&+ [3q_{11}q_{11} + 2q_{22}(q_{11}+q_{12}+q_{22}) + q_{33}(q_{11}+q_{12}+q_{13}+q_{22}+q_{23}+q_{33})] \\
&- \left[ 3 \left(\begin{smallmatrix}q_{11}+1 \\ 2\end{smallmatrix}\right)+ 4 \left(\begin{smallmatrix}q_{22}+1 \\ 2\end{smallmatrix}\right)+ 3 \left(\begin{smallmatrix}q_{33}+1 \\ 2\end{smallmatrix}\right) \right].\end{align*}
 It is also clear from the theorem that for a $k$-irreducible partition $\lambda$, with $q_{ii} = 0$ for all $i$, we have
 $$|\mf c^{(k)}(\lambda)| - |\lambda| = \!\!\!\! \sum_{1 \leq i < j \leq k}\!\!\!\! r_{ij} q_{ij}+ \!\!\!\!\sum_{1 \leq i < j < h \leq k}\!\!\!\! r_{hh}q_{ij}  .$$
\end{example}

The theorem is proved in Section \ref{proofs}.

\section{Strong and weak covers, weak horizontal strips, and weak vertical strips} \label{strips}

As mentioned in Section \ref{residue}, residue and quotient tables were introduced in \cite{fk} to describe strong covers. We restate the description since the definitions of residue and quotient tables used here are a bit different, and because the $\lambda^S$-notation makes the description slightly more elegant.

\begin{definition}
 For $k$-bounded partitions $\lambda$ and $\mu$ satisfying $|\lambda| = |\mu| + 1$, we say that \emph{$\lambda$ covers $\mu$ in the strong order with multiplicity $d$} if ${\mf c^{(k)}}(\mu) \subseteq {\mf c^{(k)}}(\lambda)$, and ${\mf c^{(k)}}(\lambda)/{\mf c^{(k)}}(\mu)$ has $d \geq 1$ connected components (which are necessarily ribbons and translates of each other).
\end{definition}

\begin{theorem}{\cite[Theorem 5.2]{fk}} \label{strong}
 For a generalized multiset $S$ of size $-1$, $\lambda$ covers $\lambda^S$ in the strong order if and only if, for some $1 \leq I < J \leq k+1$:
 \begin{itemize}
  \item $\sigma_i = 0$ if $i \neq I,J$,
  \item $r_{I,j} > \sigma_J$ for $j = I,\ldots,J-2$,
  \item $r_{I,J-1} = \sigma_J$,
 \end{itemize}
 where $R = (r_{ij})_{1 \leq i \leq j \leq k}$ is the residue table of $\lambda$. Furthermore, the multiplicity of this cover relation is $q_{I,I} + \cdots + q_{I,J-1}$, where $Q = (q_{ij})_{1 \leq i \leq j \leq k}$ is the quotient table of $\lambda$ (if this sum is $0$, $\lambda$ does not cover $\lambda^S$ in the strong order). \qed
\end{theorem}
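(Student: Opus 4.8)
The plan is to deduce Theorem~\ref{strong} from \cite[Theorem~5.2]{fk}, which it merely restates, the only real work being bookkeeping. First I would record the precise dictionary between the residue and quotient tables of \cite{fk} (indexed with rows and columns interchanged relative to ours, and built by a cosmetically different recursion) and the ones of Section~\ref{residue}, and check how an entry, the ``run of entries to its left'' (recall the form of the $\cite{fk}$ condition used in the first example of the introduction: \emph{an entry of the residue table strictly smaller than all entries to its left}), and the associated partial sum of the quotient table transfer to the corresponding data on our side. A strong cover is then recorded by the generalized multiset $S=\mu-\lambda$ of size $-1$, where $\mu$ is the partition covered; transcribing the \cite{fk} condition (``such an entry occurs at the end of the run in row $I$ from column $I$ through column $J-1$'', with multiplicity the matching partial sum of the quotient table) into the $\lambda^S$-language produces exactly the three bulleted conditions $\sigma_i=0$ for $i\ne I,J$, $r_{I,j}>\sigma_J$ for $I\le j\le J-2$, $r_{I,J-1}=\sigma_J$, and the multiplicity $q_{I,I}+\cdots+q_{I,J-1}$.

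I would also want a derivation that does not treat \cite{fk} as a black box, and here Proposition~\ref{meaning} is the natural tool. Put $\mu=\lambda^S$, $\tau=\mf c^{(k)}(\lambda)$, $\pi=\mf c^{(k)}(\mu)$. Since $\sigma_i=0$ for $i\ne I,J$, we have $m_i(\mu)=m_i(\lambda)$ for $i<I$, so the first $I-1$ columns of the residue and quotient tables of $\lambda$ and $\mu$ agree, and by the column-by-column construction of $\mf c^{(k)}$ that is built into the proof of Proposition~\ref{meaning}, $\tau$ and $\pi$ already agree on all columns of length $\ge k+2-I$. The effect of $S$ then propagates as a chain of ``carries'' through columns $I,\ldots,J-1$ of the residue table (subtracting from $r_{I,\bullet}$ and pushing the deficit rightward, the increments $m_j$ themselves being unchanged for $j>J$), and the conditions $r_{I,j}>\sigma_J$ for $j\le J-2$ together with $r_{I,J-1}=\sigma_J$ are precisely what makes this deficit die off exactly at column $J$, so that the net change to the $(k+1)$-core is the removal of a disjoint union of mutually translated ribbons rather than a more general skew shape; this gives $\pi\subseteq\tau$ with $\tau/\pi$ a union of $d$ translated ribbons. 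For the multiplicity, each column $j\in\{I,\ldots,J-1\}$ at which a carry occurs contributes $q_{I,j}$ extra cells of hook length $>k$ when the rows of length $j$ are inserted in forming $\mf c^{(k)}$, and these are exactly the $d$ translated copies of the ribbon, so $d=q_{I,I}+\cdots+q_{I,J-1}$, the relation being a genuine cover precisely when this sum is at least $1$.

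The main obstacle is the core surgery in the second approach: proving that, when the residue conditions hold, the skew $\mf c^{(k)}(\lambda)/\mf c^{(k)}(\lambda^S)$ is \emph{exactly} a disjoint union of equal translated ribbons, and that the number of components equals $q_{I,I}+\cdots+q_{I,J-1}$; this needs a careful analysis of how a run of carries in the residue table is reflected in the shape added to the core. Two further points require care: the converse (if $\lambda$ covers $\lambda^S$ in the strong order, then $S$ has the stated two-column form with the stated inequalities), and the fact — implicit in the statement — that whenever the three conditions hold, $\lambda^S$ is automatically a bona fide $k$-bounded partition, so that ``$\lambda^S$ is a partition'' need not be hypothesized separately. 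If one instead follows the dictionary route, the transposition bookkeeping between the two conventions is the error-prone step, and the degenerate case where the relevant quotient-table partial sum vanishes must still be reconciled by hand.
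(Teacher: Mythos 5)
The paper offers no proof of Theorem~\ref{strong}: the statement is marked with a \qed and is simply a transcription, into the conventions of Section~\ref{residue} and the $\lambda^S$-notation, of \cite[Theorem~5.2]{fk}. The surrounding prose says only that ``we restate the description since the definitions of residue and quotient tables used here are a bit different,'' with no dictionary written out. Your first route---set up the transposition dictionary between the two conventions and transfer the statement, the remaining work being bookkeeping---is therefore exactly what the paper does (or rather, asserts without doing). As a proof it is as complete as the paper's own treatment, which is to say, a citation.

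Your second, self-contained route via Proposition~\ref{meaning} and ``core surgery'' is a genuinely different plan that does not appear in the paper, and the idea is reasonable: trace how a single-carry chain through columns $I,\ldots,J-1$ of the residue table is reflected in the column heights of the $(k+1)$-core, and count the $q_{I,j}$ contributions. However, as written it contains a small error and a large acknowledged gap. The small error: you claim $m_i(\mu)=m_i(\lambda)$ for $i<I$ and hence that the first $I-1$ columns of $R,Q$ agree; but $m_{I-1}(\mu)=m_{I-1}(\lambda)-\sigma_I\neq m_{I-1}(\lambda)$ (since $\sigma_I=-1-\sigma_J\neq0$), so only the first $I-2$ columns agree, and the statement about which core columns are already fixed needs to be shifted accordingly. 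The large gap you name yourself---showing that under the three residue-table conditions the skew shape $\mf c^{(k)}(\lambda)/\mf c^{(k)}(\lambda^S)$ is exactly a disjoint union of translated ribbons, no more and no less, and that the number of components is the partial row sum of $Q$---is precisely the content of \cite[Theorem~5.2]{fk} and is not resolved by anything in this paper. So the second route is a reasonable program but not a proof; to make it one you would need to carry out the core analysis (roughly the content of \cite{fk}, Sections~4--5), and at that point you would have reproved the cited theorem rather than deduced the restatement from it.
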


In other words, to find all partitions that $\lambda$ covers, find entries $r_{I,J-1}$ in the residue table that are strictly smaller than the entries to its left, and for every such $I,J$, add $r_{I,J-1}$ cells in column $J$ of $\lambda$, and remove $r_{I,J-1}+1$ cells in column $I$ of $\lambda$; the corresponding multiplicity is the sum of the entries in row $I$ up to column $J-1$ of the quotient table. Since the $k$-column of the residue table contains only zeros, we never have to add cells in column $k+1$, even when $J = k + 1$. 

\begin{example} 
 For $k = 4$ and $\lambda = 44432211111$ from the previous example, there are eight entries in the residue table of $\lambda$ that are strictly smaller than all the entries to its left: $r_{11} = 1,r_{12} = 0,r_{22}=2,r_{23}=1,r_{24}=0,r_{33}=1,r_{34}=0,r_{44}=0$. Therefore $\lambda$ covers:
 \begin{itemize}
  \item $\lambda^{\set{1^{-2},2^1}} = 444322211$ with multiplicity $1$;
  \item $\lambda^{\set{1^{-1},3^0}} = 4443221111$ with multiplicity $1 + 1 = 2$;
  \item $\lambda^{\set{2^{-3},3^{2}}}$ (not a valid partition) with multiplicity $0$;
  \item $\lambda^{\set{2^{-2},4^{1}}} = 44441111111$ with multiplicity $0 + 1 = 1$;
  \item $\lambda^{\set{2^{-1},5^{0}}} = 44432111111$ with multiplicity $0 + 1 + 4 = 5$;
  \item $\lambda^{\set{3^{-2},4^{1}}}$ (not a valid partition) with multiplicity $0$;
  \item $\lambda^{\set{3^{-1},5^{0}}} = 44422211111$ with multiplicity $0 + 4 = 4$;
  \item $\lambda^{\set{4^{-1},5^{0}}} = 44332211111$ with multiplicity $3$;
 \end{itemize}
\end{example}

Residue tables also enable us to give a truly concise description of weak horizontal strips (and therefore of weak covers, which are weak horizontal strips of size $1$). 

\begin{theorem} \label{whs}
 For a $k$-bounded partition $\lambda$ and $S \subseteq [k] = \set{1,\ldots,k}$, $\lambda^S/\lambda$ is a weak horizontal strip if and only if $r_{i,j-1} > 0$ for $i \notin S$, $j \in S$, $i < j$, where $R = (r_{ij})_{1 \leq i \leq j \leq k}$ is the residue table of $\lambda$. In particular, $\lambda^{\set j}$ covers $\lambda$ in the weak order if and only if $r_{1,j-1},\ldots,r_{j-1,j-1} > 0$.
\end{theorem}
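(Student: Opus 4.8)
The plan is to peel off the horizontal-strip part of the definition, reduce the rest to a vertical-strip condition on the $k$-conjugates, and then convert that into inequalities for partial column-sums of the quotient table via Corollary \ref{kconj}. First I would dispose of the horizontal-strip requirement. Since $S$ is an ordinary set one has $(\lambda^S)'_j=\lambda'_j+[j\in S]$ for every $j$, so $\lambda^S$ is a $k$-bounded partition containing $\lambda$ exactly when $m_j(\lambda)\ge 1$ for each $j$ with $j\notin S$ and $j+1\in S$; and when this holds, $\lambda^S/\lambda$ has one cell in each column indexed by $S$ and is therefore automatically a horizontal strip. A one-line check shows the residue-table condition already subsumes this validity: if $j\notin S$, $j+1\in S$ and $m_j(\lambda)=0$, then the instance of the stated condition in which the pair of indices is $(j,j+1)$ requires $r_{jj}>0$, whereas $r_{jj}=m_j(\lambda)\md(k+1-j)=0$. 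So in either direction we may assume that $\lambda^S$ is a $k$-bounded partition with $\lambda\subseteq\lambda^S$, and the whole content of the theorem is the equivalence between the residue-table condition and the assertion that $(\lambda^S)^{(k)}/\lambda^{(k)}$ is a vertical strip.

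Next I would rewrite that vertical-strip condition using Corollary \ref{kconj}. For a $k$-bounded partition $\nu$ let $T_c(\nu)$ be the sum of all entries in the first $c$ columns of the quotient table of $\nu$; then Corollary \ref{kconj} gives $(\nu^{(k)})'_c=\sum_{p\ge c}m_p(\nu^{(k)})=T_{k+1-c}(\nu)$. Using that $A/B$ is a vertical strip exactly when $A'/B'$ is a horizontal strip, that is, $A'_{c+1}\le B'_c\le A'_c$ for all $c$, and applying this with $A=(\lambda^S)^{(k)}$ and $B=\lambda^{(k)}$, the statement to be proved becomes the chain of inequalities
$$T_{d-1}(\lambda^S)\le T_d(\lambda)\le T_d(\lambda^S)\qquad(1\le d\le k).$$
Everything now hinges on how the partial column-sums $T_d$ of the quotient table change when $\lambda$ is replaced by $\lambda^S$.

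To make that change explicit I would use the defining relations $(k+1-j)q_{jj}=m_j(\lambda)-r_{jj}$ and $(k+1-j)q_{ij}=m_j(\lambda)+r_{i,j-1}-r_{ij}$; summing over $i=1,\dots,j$ shows that the sum of column $j$ of $Q$ equals $\bigl(j\,m_j(\lambda)+C_{j-1}-C_j\bigr)/(k+1-j)$, where $C_j$ is the sum of column $j$ of the residue table and $C_0=0$. Passing from $\lambda$ to $\lambda^S$ replaces $m_j(\lambda)$ by $m_j(\lambda)+[j\in S]-[j+1\in S]$, and one then has to propagate this perturbation along each row of the residue table --- each $r_{ij}$ being built from $r_{i,j-1}$ by adding $m_j$ and reducing modulo $k+1-j$ --- and read off the net effect on the $C_j$'s, hence on the $T_d$'s.

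I expect this last step to be the main obstacle: following how a simultaneous $\pm1$ perturbation of several multiplicities $m_j$ ripples down the rows of the residue table, and matching the resulting discrepancies in the $T_d$'s precisely against the vanishing of an entry $r_{i,j-1}$ with $i\notin S$, $j\in S$, $i<j$. The key point to establish is that the ``carry'' produced in row $i$ by the cell added in column $j$ appears exactly when $i\notin S$, $j\in S$ and $r_{i,j-1}=0$ --- in which case it forces one extra unit of $k$-conjugate length past position $i$ and violates one of the inequalities above --- while it is absorbed harmlessly as soon as all the relevant $r_{i,j-1}$ are positive. If this direct bookkeeping gets unwieldy, I would instead organize the argument as a double induction in the style of the proof of Proposition \ref{meaning}: build $\lambda$ and $\lambda^S$ simultaneously as $\lambda_{(1)}\subseteq\lambda_{(2)}\subseteq\cdots$ and $\lambda^S_{(1)}\subseteq\lambda^S_{(2)}\subseteq\cdots$ by adjoining parts of size $1,2,\dots$, and prove by induction on $j$ that $(\lambda^S_{(j)})^{(k)}/(\lambda_{(j)})^{(k)}$ meets the interleaving condition precisely when the residue condition involving only columns up to $j$ does, using the geometric description from that proof of how the $(k+1)$-core changes when a column of prescribed length is appended.
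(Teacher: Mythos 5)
Your reduction is correct and matches the paper's strategy: the validity/horizontal-strip part is absorbed into the residue condition exactly as you say (via $r_{jj}=m_j(\lambda)\md(k+1-j)$), and your reformulation of the vertical-strip requirement on $k$-conjugates as the chain $T_{d-1}(\lambda^S)\le T_d(\lambda)\le T_d(\lambda^S)$ is a correct repackaging, using Corollary \ref{kconj}, of the standard characterization $A'_{c+1}\le B'_c\le A'_c$.

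However, the step you identify as ``the main obstacle'' is not an obstacle to be worked around --- it \emph{is} the proof, and you have not supplied it. What you are sketching is precisely the content of Lemma \ref{lemma} in the paper: a complete description of how the residue and quotient tables transform under $\lambda\mapsto\lambda^S$. The paper establishes this by a row-by-row case analysis (Cases I.1--I.4 and II.1--II.5 in Section \ref{proofs}), tracking for each row $i$ whether $i\in S$ and, if so, identifying the pivotal column $h(i)$ where a genuine carry in $Q$ occurs ($q'_{i,h(i)}=q_{i,h(i)}+1$, $q'_{i,h(i)+1}=q_{i,h(i)+1}-1$), together with a separate argument (part (b)) that a vanishing $r_{i,j-1}$ with $i\notin S$, $j\in S$ produces a \emph{deficit} in some column of $Q'$, hence strictly fewer parts $\ge k+2-J$ in $(\lambda^S)^{(k)}$, breaking containment. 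Your informal description of the carry mechanism also misattributes where the carries occur when the residue condition holds: they are produced in rows $i\in S$ at the columns $h(i)$, $h(i)+1$, not in rows $i\notin S$; the rows $i\notin S$ with a zero entry instead produce a borrow, which is a different phenomenon leading to the failure of containment rather than an ``extra unit'' of length. Until the full bookkeeping of the $\pm1$ propagation through $R$ and $Q$ (or the alternative double induction you mention) is actually carried out and verified in both directions, the argument is a plan rather than a proof.
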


\begin{example}
 The residue table of the $4$-bounded partition $\lambda = 44211$ is $\begin{smallmatrix}
 2 & 0 & 0 & 0 \\
   & 1 & 1 & 0 \\
   &   & 0 & 0 \\
   &   &   & 0
\end{smallmatrix}$. Since $r_{22} > 0$, $\lambda^{\set{1,3}}/\lambda = 443111/44211$ is a weak horizontal strip; indeed, the $4$-conjugates of $\lambda^{\set{1,3}}$ and $\lambda$ are $311111111111$ and $3111111111$. On the other hand, $r_{33} = 0$, so $\lambda^{\set{1,4}}/\lambda$ is not a weak horizontal strip (indeed, $\lambda^{\set{1,4}}$ is not even a partition). Since $r_{12} = 0$, $\lambda^{\set{2,3}}/\lambda= 44321/44211$ is also not a weak horizontal strip, even though it is a horizontal strip; indeed, the $4$-conjugate of $\lambda^{\set{2,3}}$ is $221111111111$, and $221111111111/3111111111$ is obviously not a vertical strip.\\
For $j = 1$ and $j = 2$, all the entries of column $j-1$ are non-zero (for $j = 1$, this is true vacuously and for all $k$-bounded partitions). Therefore $\lambda$ is covered by two elements in the weak order, $\lambda^{\set 1} = 442111$ and $\lambda^{\set 2} = 44221$.
\end{example}

For the proof of Theorem \ref{whs}, we need the following lemma.

\begin{lemma} \label{lemma}
 Denote the residue and quotient tables of $\lambda$ (resp., $\lambda^S$) by $R$ and $Q$ (resp., $R'$ and $Q'$), and write $m_j$ (resp., $m'_j$) for $m_j(\lambda)$ (resp., $m_j(\lambda^S)$). 
 \begin{enumerate} 
 \renewcommand{\labelenumi}{(\alph{enumi})}
  \item Suppose that $r_{i,j-1} > 0$ for all $i \notin S$, $j \in S$, $i < j$. For $i \in S$, define 
$$h(i) = \min \{ h \geq i \colon h+1 \notin S, r_{ih} = k - h \}$$ 
(since $h = k$ satisfies the condition, $h(i)$ is well defined).
 Then
 \begin{align*}
 r'_{ij} &= \left\{ 
\begin{array}{ccl}
 r_{ij}-1 & \colon & i \notin S, j + 1 \in S \\
 r_{ij} & \colon & i \notin S, j + 1 \notin S \\
 r_{ij} & \colon & i \in S, j + 1 \in S, j < h(i) \\
 r_{ij}-1 & \colon & i \in S, j + 1 \in S, j > h(i) \\
 r_{ij}+1 & \colon & i \in S, j + 1 \notin S, j < h(i) \\
 0 & \colon & i \in S, j + 1 \notin S, j = h(i) \\
 r_{ij} & \colon & i \in S, j + 1 \notin S, j > h(i)
\end{array}
\right. \\
 q'_{ij} &= \left\{ 
\begin{array}{ccl} 
q_{ij}+1 & \colon & i \in S, j = h(i) \\ 
q_{ij}-1 & \colon & i \in S, j = h(i)+1 \\ 
q_{ij} & \colon & \mbox{otherwise} 
\end{array}\right..
\end{align*}
Furthermore, if $i \in S$, $j > h(i)$, then $r_{ij} = r_{h(i)+1,j}$ (and in particular, $r_{i,j-1} > 0$ if $j \in S$) and $r'_{ij} = r'_{h(i)+1,j}$.
 \item Suppose that $r_{i,j-1} = 0$ for some $i \notin S$, $j \in S$, $i < j$. Then $\lambda^{(k)} \not\subseteq (\lambda^S)^{(k)}$.
\end{enumerate}
\end{lemma}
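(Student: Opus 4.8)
\textbf{Proof plan for Lemma \ref{lemma}.}

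The plan is to analyze both parts by tracking how the recursions defining $R$ and $Q$ propagate the single perturbation encoded by $S$ down each row. The key observation is that $S \subseteq [k]$ is a set, so by Notation \ref{notation} we have $m'_j = m_j + \chi_S(j) - \chi_S(j+1)$, which changes $m_j$ by at most $1$ in absolute value, and only at the "boundary" indices of $S$. Fix a row index $i$. If $i \notin S$, then $m'_i = m_i - \chi_S(i+1)$ and, as we move right along row $i$, the quantity feeding into position $(i,j)$ is $m'_j + r'_{i,j-1}$; I would show by induction on $j$ that $r'_{ij} - r_{ij} \in \{-1,0\}$ throughout, with the value $-1$ occurring exactly when $j+1 \in S$ (the running "deficit" is carried along and only discharged at indices $j$ with $j+1 \in S$, where $m'_j = m_j + 1$ cancels it, or passed on where $m'_j = m_j$). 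Crucially the hypothesis of part (a), namely $r_{i,j-1} > 0$ for $i \notin S$, $j \in S$, $i < j$, guarantees that subtracting $1$ never causes an "underflow" that would change the quotient $q_{ij}$, so $q'_{ij} = q_{ij}$ for $i \notin S$. This is the content of the first two lines of the $r'$-formula and accounts for the "otherwise" case of the $q'$-formula in rows $i \notin S$.

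For a row $i \in S$, the bookkeeping is more delicate. Here $m'_i = m_i + 1 - \chi_S(i+1)$, so there is a running \emph{surplus} of $+1$ entering the row. As we move right, at each position $j$ with $j+1 \in S$ we have $m'_j = m_j$ and the surplus is carried (so $r'_{ij} = r_{ij}$), while at positions $j$ with $j+1 \notin S$ we have $m'_j = m_j + 1$ and the surplus would make $r'_{ij} = r_{ij} + 1$ — \emph{unless} this overflows, i.e.\ unless $r_{ij} = k - j$, in which case $r'_{ij}$ wraps to $0$, the quotient increments ($q'_{ij} = q_{ij}+1$), and now there is a \emph{deficit} of $-1$ entering position $(i,j+1)$. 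The index $h(i)$ is precisely defined as the first $j \geq i$ with $j+1 \notin S$ and $r_{ij} = k - j$ where this overflow happens; after $h(i)$ the row behaves exactly like a row not in $S$ carrying a deficit $-1$, which explains the lines $r'_{ij} = r_{ij} - 1$ for $j+1 \in S, j > h(i)$, $r'_{ij} = r_{ij}$ for $j+1 \notin S, j > h(i)$, and $q'_{ij} = q_{ij} - 1$ exactly at $j = h(i)+1$ (where the deficit is discharged against the $m'_j = m_j + 1$, which requires checking $r_{i,h(i)+1} > 0$ — this is where the "furthermore" clause is needed). The final identity $r_{ij} = r_{h(i)+1,j}$ and $r'_{ij} = r'_{h(i)+1,j}$ for $j > h(i)$ follows because at position $(i, h(i)+1)$ the incoming quantity $m'_{h(i)+1} + r'_{i,h(i)}$ equals $m_{h(i)+1} + 0$, i.e.\ the same value that initiates row $h(i)+1$ (whose diagonal entry uses $r_{h(i)+1,h(i)} = 0$... more precisely one matches the recursions from that column on); since the recursions from column $h(i)+1$ onward depend only on that seed value and on $\{m_j\}_{j > h(i)}$, which are unchanged, the two rows agree, and in particular $r_{i,j-1} = r_{h(i)+1,j-1}$, which is $> 0$ whenever $j \in S$ by the part (a) hypothesis applied to row $h(i)+1 \notin S$.

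For part (b), suppose $r_{i,j-1} = 0$ for some $i \notin S$, $j \in S$, $i < j$. I would use Proposition \ref{meaning}: column $j-1$ of $R$ records the parts of $\lambda_{(j-1)}^{(k)}$ that are $\leq k+1-j$, and more to the point, $m_{k+1-i}(\lambda^{(k)}) = \sum_{h} q_{h,k+1-i}$ by Corollary \ref{kconj}. The cleanest route is to exhibit a column-index $c$ with $\lambda'^{(k)}_c$ strictly exceeding $(\lambda^S)'^{(k)}_c$ while no smaller column compensates, contradicting $\lambda^{(k)} \subseteq (\lambda^S)^{(k)}$; concretely, since $i \notin S$ the value $r_{i,j-1} = 0$ means that in passing to $\lambda^S$ the deficit described in part (a)'s analysis (the line $r'_{ij} = r_{ij} - 1$ for $i \notin S$, $j+1 \in S$, but now tracked from column $i$) cannot be absorbed — one of the residue entries that "should" be $r_{ij} - 1$ would become negative, which by the division algorithm forces a quotient to \emph{decrease}, i.e.\ $q'_{h,c} = q_{h,c} - 1$ for some $h,c$ with no offsetting increase in a column $\leq c$, so $m_{k+1-c}((\lambda^S)^{(k)}) < m_{k+1-c}(\lambda^{(k)})$ while parts larger than $k+1-c$ are unchanged; hence $\lambda^{(k)} \not\subseteq (\lambda^S)^{(k)}$.

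The main obstacle I anticipate is part (a): carrying out the nested induction (on $i$, and within each row on $j$, with the surplus/deficit invariant) cleanly enough that the seven cases of the $r'$-formula and three cases of the $q'$-formula all drop out without sign errors, and in particular pinning down that the division algorithm's quotient genuinely only changes at $j = h(i)$ and $j = h(i)+1$ and nowhere else — this rests entirely on the positivity hypothesis $r_{i,j-1} > 0$ being available exactly where a subtraction occurs, including in the propagated rows $h(i)+1$, which is why the "furthermore" clause must be proved simultaneously with the main claim rather than afterwards.
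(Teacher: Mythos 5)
Your proposal takes essentially the same route as the paper's proof: a row-by-row induction on $j$ tracking how the modular recursion for $R$ and $Q$ propagates a $\pm 1$ perturbation, with the hypothesis $r_{i,j-1}>0$ exactly ruling out underflow, $h(i)$ marking the unique overflow in rows $i\in S$, the row comparison with row $h(i)+1$ proved alongside the main claim, and part (b) concluded by exhibiting a strict drop in some quotient-column sum. One small wording slip: in your bookkeeping for $i\notin S$ you say the deficit is ``discharged at indices $j$ with $j+1\in S$, where $m'_j = m_j+1$''; in fact the deficit \emph{appears} there, and it is at indices $j\in S$, $j+1\notin S$ (where $m'_j = m_j+1$) that it is cancelled — your stated conclusion $r'_{ij}=r_{ij}-1 \Leftrightarrow j+1\in S$ is correct, just the parenthetical explanation is garbled. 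This does not affect the soundness of the plan.
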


The proof of the lemma is given in Section \ref{proofs}.

\begin{proof}[Proof of Theorem \ref{whs}]
 The ``only if'' part of the theorem follows immediately from part (b) of Lemma \ref{lemma}, let us prove the ``if'' part.
 Clearly $\sigma_j \leq 1$ for all $j$, and if $j+1 \in S, j \notin S$, then $m_j(\lambda) \geq r_{jj} > 0$, so $\lambda^{S}/\lambda$ is a horizontal strip (see Remark \ref{remark}). It remains to prove that $(\lambda^{S})^{(k)}/\lambda^{(k)}$ is a vertical strip. By Lemma \ref{lemma} and Corollary \ref{kconj}, we have $(\lambda^S)^{(k)} = \left(\lambda^{(k)}\right)^T$, where $T$ is a multiset with $\tau_{j} = |\set{i \colon i \in S, h(i) = k + 1 - j}|$ copies of $j$. By Remark \ref{remark}, we need to prove that $\tau_j \leq m_{j-1}(\lambda^{(k)})$. But whenever $h(i) = k+1-j$ for $i \in S$, we have $q'_{i,k+2-j} = q_{i,k+2-j}-1$, and in particular, $q_{i,k+2-j} \geq 1$. By Corollary \ref{kconj}, $m_{j-1}(\lambda^{(k)}) \geq |\set{i \colon i \in S, h(i) = k + 1 - j}| = \tau_j$.
\end{proof}

\begin{corollary} \label{pieri}
 For a $k$-bounded partition $\lambda$ and $1 \leq n \leq k$, we have
 $$s_\lambda^{(k)} h_n = \sum_S s_{\lambda^S}^{(k)},$$
 where the sum is over all sets $S \subseteq [k]$ of size $n$ with the property $r_{ij} > 0$ for $i \notin S$, $j+1 \in S$, $i \leq j$. In particular, $s_\lambda^{(k)} h_k = s_{\lambda \cup k}^{(k)}$.
\end{corollary}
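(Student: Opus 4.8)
The plan is to deduce the corollary directly from the defining $k$-Pieri rule \eqref{kpieri} by reindexing its right-hand side, and then to substitute the combinatorial criterion for weak horizontal strips supplied by Theorem \ref{whs}. No genuinely new ingredient is needed; the work is purely bookkeeping.

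First I would start from \eqref{kpieri}: $s_\lambda^{(k)} h_n = \sum_\nu s_\nu^{(k)}$, where $\nu$ runs over all $k$-bounded partitions for which $\nu/\lambda$ is a weak horizontal strip of size $n$. Since a weak horizontal strip is in particular a horizontal strip, every such $\nu$ satisfies $\lambda \subseteq \nu$, so the multiset $S := \nu - \lambda$ of Notation \ref{notation} is defined, and because $\nu/\lambda$ has at most one cell in each column, $S$ is actually a \emph{set}, with $|S| = n$ and $\nu = \lambda^S$. Conversely, whenever $S$ is a set for which $\lambda^S/\lambda$ is a horizontal strip we have $\lambda \subseteq \lambda^S$, so $\lambda^S$ is again a partition, and it is $k$-bounded precisely when $m_j(\lambda^S) = 0$ for all $j > k$, which (as $\lambda$ is $k$-bounded) holds if and only if $S \subseteq [k]$. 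Hence the sum in \eqref{kpieri} is exactly $\sum_S s_{\lambda^S}^{(k)}$, taken over the sets $S \subseteq [k]$ with $|S| = n$ for which $\lambda^S/\lambda$ is a weak horizontal strip.

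Next I would invoke Theorem \ref{whs}: $\lambda^S/\lambda$ is a weak horizontal strip if and only if $r_{i,j-1} > 0$ for all $i \notin S$, $j \in S$ with $i < j$. Replacing $j$ by $j+1$ turns this condition into $r_{ij} > 0$ for all $i \notin S$, $j+1 \in S$ with $i \le j$, which is exactly the condition in the statement; this proves the displayed identity. For the final assertion, take $n = k$: the unique size-$k$ subset of $[k]$ is $S = [k]$, and for it the requirement ``$r_{ij} > 0$ for $i \notin S$'' is vacuous (no $i \in \{1,\dots,k\}$ lies outside $[k]$), so $S = [k]$ always contributes. A one-line computation from $m_j(\lambda^S) = m_j(\lambda) + \sigma_j - \sigma_{j+1}$ gives $m_j(\lambda^{[k]}) = m_j(\lambda)$ for $j < k$ and $m_k(\lambda^{[k]}) = m_k(\lambda) + 1$, i.e.\ $\lambda^{[k]} = \lambda \cup k$, whence $s_\lambda^{(k)} h_k = s_{\lambda \cup k}^{(k)}$.

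I do not anticipate a real obstacle: all the substance lies in \eqref{kpieri} and Theorem \ref{whs}. The only points requiring care are the index shift $j \leftrightarrow j+1$ between the two formulations, and the check that $\nu \mapsto \nu - \lambda$ is a bijection from the summation set of \eqref{kpieri} onto the sets $S \subseteq [k]$ described here — the latter being precisely where the restriction $S \subseteq [k]$, equivalently the $k$-boundedness of $\lambda^S$, enters.
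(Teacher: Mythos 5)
Your argument is correct and follows the same route as the paper: restate \eqref{kpieri} in terms of the sets $S = \nu - \lambda$ and then substitute the characterization of weak horizontal strips from Theorem \ref{whs}. The paper's proof is just a terser version of what you wrote, with the bijection $\nu \leftrightarrow S$, the index shift $j \mapsto j+1$, and the $k$-boundedness check left implicit; your more careful bookkeeping of those points is all sound.
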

\begin{proof}
 The first statement in just a restatement of \eqref{kpieri}, the Pieri rule for $k$-Schur functions, using Theorem \ref{whs}. The only $k$-subset of $[k]$ is $[k]$ itself, and $r_{ij} > 0$ for $i \notin S$, $j+1 \in S$, $i \leq j$, is clearly satisfied. It is also obvious that $\lambda^{[k]} = \lambda \cup k$.
\end{proof}

The description of weak vertical strips is more complicated. The following theorem tells us how to use the residue table to determine whether or not $\lambda^S/\lambda$ is a weak vertical strip for a multiset $S$ with $\sigma_j$ copies of $j$. 

\begin{theorem} \label{wvs}
 For a $k$-bounded partition $\lambda$ and a multiset $S \subseteq [k]$ of size $\leq k$ with $\sigma_i$ copies of $i$, $\lambda^S/\lambda$ is a weak vertical strip if and only if for $i < j$, $\sigma_j > 0$, we have
 $$\sigma_j \leq r_{i,j-1} \leq k + 1 - j - \sigma_i + |\{ h \colon i \leq h \leq j-2, r_{ih} > k-h-\sigma_i, r_{ih} > r_{i,j-1} \} |. $$
\end{theorem}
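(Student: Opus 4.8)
The strategy is to reduce the statement about weak vertical strips to a statement about residue and quotient tables, paralleling the proof of Theorem \ref{whs} but tracking the finer structure needed when $\sigma_i$ can exceed $1$. Recall that $\lambda^S/\lambda$ is a weak vertical strip precisely when (i) $\lambda^S/\lambda$ is a vertical strip and (ii) $(\lambda^S)^{(k)}/\lambda^{(k)}$ is a horizontal strip. Condition (i) is handled immediately by Remark \ref{remark}: it is equivalent to $m_j(\lambda) \geq \sigma_{j+1}$ for all $j$, i.e.\ $r_{jj} + (k+1-j) q_{jj} \geq \sigma_{j+1}$. So the real content lies in translating (ii) into the displayed inequality, and the inclusion $\lambda^{(k)} \subseteq (\lambda^S)^{(k)}$ (which condition (ii) presupposes) must also be extracted, in analogy with part (b) of Lemma \ref{lemma}.

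\textbf{Key steps.} First I would establish the analogue of Lemma \ref{lemma} for a multiset $S$: assuming the inequalities in the theorem hold, I would give explicit formulas for the entries $r'_{ij}$ and $q'_{ij}$ of the residue and quotient tables of $\lambda^S$ in terms of those of $\lambda$ and the "threshold" data. The natural bookkeeping device is, for each $i$ with $\sigma_i$ relevant, a nested sequence of cut-off columns $h_1(i) < h_2(i) < \cdots$ (one for each of the $\sigma_i$ cells added or removed as we sweep rightward along row $i$ of the residue table), defined by when a running residue overflows a multiple of $k+1-j$; the commented-out version of Lemma \ref{lemma} in the excerpt is essentially this statement and suggests the precise recursion. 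Analyzing how the residue $r_{i,j-1}$ together with the corrections $-\sigma_i$ coming from row entries behaves as we cross these cut-offs yields exactly the two-sided bound in the theorem: the lower bound $\sigma_j \le r_{i,j-1}$ is the vertical-strip/inclusion condition in column $j$, while the upper bound $r_{i,j-1} \le k+1-j-\sigma_i + |\{h : \ldots\}|$ records that no more than $m_{j}\big((\lambda^S)^{(k)}\big) - m_{j}\big(\lambda^{(k)}\big) \le 1$ extra column-length-$(k-j+1)$ columns are created — the cardinality term counting the cut-offs $h$ strictly between $i$ and $j-1$ at which an overflow already occurred. Having the formulas for $Q'$, I would then invoke Corollary \ref{kconj} to compute $m_\ell(\lambda^{(k)})$ and $m_\ell((\lambda^S)^{(k)})$ as column sums of $Q$ and $Q'$, and check that $(\lambda^S)^{(k)} = (\lambda^{(k)})^T$ for a multiset $T$ supported on at most the columns, and that $\lambda^{(k)} \subseteq (\lambda^S)^{(k)}$ with $\tau_\ell \le 1$ for all $\ell$ — which is exactly "horizontal strip" via Remark \ref{remark}. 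The converse direction (necessity) proceeds as in part (b) of Lemma \ref{lemma}: if one of the inequalities fails for some $i<j$ with $\sigma_j>0$, I would exhibit directly, from the residue-table recursion, either a column of $(\lambda^S)^{(k)}$ strictly shorter than the corresponding column of $\lambda^{(k)}$ (violating inclusion) or two cells of $(\lambda^S)^{(k)}/\lambda^{(k)}$ in the same row (violating horizontality).

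\textbf{Main obstacle.} The hard part will be the multiset analogue of Lemma \ref{lemma}(a): when $\sigma_i > 1$, adding $\sigma_i$ cells in column $i$ of $\lambda$ perturbs row $i$ of the residue table by a quantity that can trigger several carries at once, and the carries interact with the perturbations $\sigma_{j+1}$ coming from other columns. Getting the nested cut-off columns $h_l(i)$ correctly defined, proving they are well-defined and strictly increasing, and verifying the case analysis for $r'_{ij}$ and $q'_{ij}$ across all the regimes $h_{l-1}(i) < j < h_l(i)$ and $j = h_l(i)$ is an intricate but essentially mechanical induction on $j$ (and a secondary induction on $|S|$, peeling off one cell at a time as in the inner induction of Proposition \ref{meaning}). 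Once that lemma is in hand, the deduction of the theorem via Corollary \ref{kconj} and Remark \ref{remark} is short. I would defer the full combinatorial verification of this lemma to Section \ref{proofs}, stating it there as a standalone lemma and using it here only as a black box.
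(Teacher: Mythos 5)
The paper itself does not prove Theorem \ref{wvs}; it merely states that the argument is ``similar to the proof of Theorem \ref{whs} and is omitted.'' Your outline is the natural adaptation of that proof, and the architecture is right: establish a multiset analogue of Lemma \ref{lemma} tracking how $R$ and $Q$ transform under $\lambda \mapsto \lambda^S$ via nested carry positions $h_l(i)$, then feed the resulting $Q'$ into Corollary \ref{kconj} and apply the Remark \ref{remark} characterization of horizontal strips, with the converse argued as in Lemma \ref{lemma}(b). In this sense you have identified the same plan the author had in mind.

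Two cautions about what is actually being deferred. First, the commented-out draft lemma you point to carries the hypothesis $r_{ij} \ge \sigma_{j+1} - \sigma_i$; that is the multiset version of the \emph{weak horizontal strip} hypothesis (the analogue of Theorem \ref{whs}), not the hypothesis that appears in Theorem \ref{wvs}. You cannot import its conclusions directly: the recursion for $R'$ and $Q'$ must be re-derived under the two-sided constraint of the present theorem, and the draft's displayed formula also has a typo ($h_{l-1}(i) < i < h_l(i)$ should be $h_{l-1}(i) < j < h_l(i)$). Second, for $(\lambda^S)^{(k)}/\lambda^{(k)}$ to be a \emph{horizontal} strip, Remark \ref{remark} requires both that $T$ be a set, i.e.\ $\tau_\ell \le 1$ for every $\ell$, and that $m_{\ell-1}(\lambda^{(k)}) \ge \tau_\ell - \tau_{\ell-1}$; in the proof of Theorem \ref{whs} only the weaker vertical-strip inequality $m_{\ell-1}(\lambda^{(k)}) \ge \tau_\ell$ was needed, and the set condition came for free. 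Here it does not, so you must show precisely how the upper bound in the theorem — in particular the correction term $\lvert\{h : i \le h \le j-2,\ r_{ih} > k-h-\sigma_i,\ r_{ih} > r_{i,j-1}\}\rvert$ — enforces $\tau_\ell \le 1$, and how the lower bound $\sigma_j \le r_{i,j-1}$ (for \emph{all} $i<j$, not only $i$ outside the support of $S$) enforces the Pieri inequality. Your gloss that the bound ``records that no more than one extra column is created'' is plausible but is not an argument. Since the multiset lemma and this correspondence together constitute essentially the entire proof, and you defer both, nothing has been established — which, to be fair, matches what the paper provides.
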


In particular, if $\sigma_j > 0$, then $\sigma_j \leq r_{j-1,j-1} \leq k - j + 1 - \sigma_{j-1}$. Also, if $S$ is a set, then $\lambda^S/\lambda$ is a weak vertical strip if and only if:
\begin{itemize}
 \item for $i < j$, $j \in S$, we have $1 \leq r_{i,j-1}$;
 \item for $i < j$, $i,j \in S$, we have $r_{i,j-1} = k + 1 -j \Rightarrow r_{ih} = k-h$ for some $h$, $i \leq h \leq j-2$.
\end{itemize}
In other words, $r_{i,j-1}$ is allowed to be maximal for $i,j \in S$, but that has to be ``compensated for'' by another $r_{ih}$ to the left of $r_{i,j-1}$ being maximal as well.

\begin{example}
 Take $k = 4$, $S = \set{1,3}$, $\lambda = 22111$ and $\mu = 22$. The residue tables of $\lambda$ and $\mu$ are $\begin{smallmatrix}
    3 & 2 & 0 & 0 \\
   & 2 & 0 & 0 \\
   &   & 0 & 0 \\
   &   &   & 0
   \end{smallmatrix}$ and $ \begin{smallmatrix}
   0 & 2 & 0 & 0 \\
   & 2 & 0 & 0 \\
   &   & 0 & 0 \\
   &   &   & 0
 \end{smallmatrix}$. While $r_{12} \geq 1$, $r_{22} \geq 1$ and $r_{12} = 2$ for both $\lambda$ and $\mu$, we have $r_{11} = 3$ for $\lambda$ and $r_{11} < 3$ for $\mu$. Therefore $\lambda^S/\lambda=321111/22111$ is a weak vertical strip while $\mu^S/\mu = 321/22$ is not.
\end{example}

The proof of Theorem \ref{wvs} is similar to the proof of Theorem \ref{whs} and is omitted.

\section{An application: multiplication with a $k$-rectangle} \label{application}

A $k$-rectangle is the Schur (and $k$-Schur) function $s_{l^{k+1-l}} = s_{l^{k+1-l}}^{(k)}$. Multiplication with a $k$-rectangle is very special; the following theorem is known (see \cite[Theorem 40]{LM07}), but we give a new and more elementary proof.

\begin{theorem}
 For a $k$-bounded partition $\lambda$ and $l$, $1 \leq l \leq k$, we have
 $$s_\lambda^{(k)} s_{l^{k+1-l}} = s_{\lambda \cup l^{k+1-l}}^{(k)}.$$
\end{theorem}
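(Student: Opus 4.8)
The plan is to prove the identity $s_\lambda^{(k)} s_{l^{k+1-l}} = s_{\lambda \cup l^{k+1-l}}^{(k)}$ by induction on $k+1-l$, using the Pieri rule for $k$-Schur functions together with the residue-table description of weak horizontal strips from Theorem~\ref{whs} (equivalently Corollary~\ref{pieri}). The base case $k+1-l = 0$, i.e.\ $l = k+1$, is vacuous, and the first genuine case is the multiplication by a single row. More usefully, the strategy is to reduce the $k$-rectangle $l^{k+1-l}$ to the complete homogeneous symmetric function: since $s_{l^{k+1-l}}$ is a Schur function, one can expand it via the (ordinary) Jacobi--Trudi or Pieri relations, but the cleanest route is to induct on the number of rows $k+1-l$ and peel off one copy of $h_l$ at a time, writing $s_{l^{m}} = s_{l^{m-1}} h_l - (\text{lower terms})$ where the lower terms are Schur functions $s_\mu$ with $\mu \subsetneq l^m$ in dominance; one must check these lower terms do not interfere, which motivates instead a more direct combinatorial argument below.

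The cleaner approach I would actually carry out: show directly that for $n = l$ (so $1 \le l \le k$), the only $k$-bounded partition $\nu$ with $s_\nu^{(k)}$ appearing in $s_\mu^{(k)} h_l$, where $\mu$ already contains many parts equal to $l$, and such that iterating gives a single term, is $\mu \cup l$. Concretely, I would prove by induction on $j = k+1-l, k-l, \ldots, 1$ that
\begin{equation*}
s_\lambda^{(k)} \, s_{l^{j}} = s_{\lambda \cup l^{j}}^{(k)},
\end{equation*}
the inductive step being $s_\lambda^{(k)} s_{l^{j}} = s_\lambda^{(k)} s_{l^{j-1}} h_l - (\text{corrections})$. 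To avoid the corrections, I would instead use the known fact that $s_{l^{j}} = \det(h_{l - r + c})_{1 \le r,c \le j}$ and argue at the level of the residue-table condition: by Corollary~\ref{pieri}, $s_\mu^{(k)} h_l = \sum_S s_{\mu^S}^{(k)}$ over $l$-subsets $S \subseteq [k]$ with $r_{ij}(\mu) > 0$ whenever $i \notin S$, $j+1 \in S$, $i \le j$. The heart of the matter is the computation of the residue table of $\lambda \cup l^{p}$: I claim that adding a $k$-rectangle's worth of copies of $l$ leaves the residue table essentially controlled, because $m_l(\lambda \cup l^{k+1-l}) = m_l(\lambda) + (k+1-l)$ and $(k+1-l) \equiv 0 \pmod{k+1-l}$, so $r_{ll}$ and hence the whole $l$-th column of the residue table is unchanged; more generally I would show columns $1,\dots,l-1$ are untouched and columns $l,\dots,k$ change in a controlled, "periodic" way that makes the Pieri condition force $S = [k] \setminus (\text{something})$ to collapse to exactly one choice at each stage.

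The key steps in order: (1) compute explicitly how the residue and quotient tables of $\lambda$ change when one passes to $\lambda \cup l^{k+1-l}$, using the defining recursions $r_{ij} = (m_j + r_{i,j-1}) \bmod (k+1-j)$ and the divisibility $(k+1-l) \mid (k+1-l)$; (2) deduce from Corollary~\ref{pieri} that the only weak horizontal strip $S$ of size $l$ that, when applied to $\lambda \cup l^{p}$, yields a $k$-bounded partition containing enough copies of $l$ to eventually reach $\lambda \cup l^{k+1-l}$, is the "obvious" one adding a full column, and that all other $S$ either fail the residue condition or lead to a dead end; (3) assemble these into the induction $s_\lambda^{(k)} s_{l^{j}} = s_{\lambda\cup l^{j}}^{(k)}$, noting $s_{l^{j}}$ satisfies $s_{l^{j}} h_l = s_{l^{j+1}} + (\text{strictly smaller Schur functions, none of which is a }k\text{-rectangle or contributes})$, or alternatively using the transpose/$e_l$ version via Theorem~\ref{wvs} if horizontal strips prove awkward. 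The main obstacle I anticipate is step~(2): controlling \emph{all} the subsets $S$ appearing in the repeated Pieri expansion and showing the massive cancellation that leaves a single surviving term — this is exactly where the residue-table bookkeeping has to be done carefully, tracking which entries $r_{ij}$ are positive as copies of $l$ accumulate, and it is the step where the "periodicity mod $k+1-l$" phenomenon must be exploited to its fullest. A secondary subtlety is ensuring that the lower-order Schur-function terms in any Jacobi--Trudi-type expansion of $s_{l^{j}}$ genuinely contribute zero after multiplication, which is cleanest to sidestep by arguing directly with the combinatorial Pieri rule rather than with determinantal identities.
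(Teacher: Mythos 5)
Your proposal circles the right ingredient but doesn't land on a workable induction, and the one you outline most concretely has a real gap. You propose inducting on the number of rows $j$ of the rectangle via $s_{l^{j}} = s_{l^{j-1}} h_l - (\text{corrections})$, but those correction terms are genuine Schur functions (not negligible), and to control $s_\lambda^{(k)}$ times each of them you would need to already know the multiplication formula you are trying to prove, or at least a comparable statement. You acknowledge this (``the main obstacle I anticipate \dots is exactly where the residue-table bookkeeping has to be done carefully'') but do not resolve it; ``massive cancellation'' is named, not exhibited. Your alternative route via Jacobi--Trudi has the same problem in a different guise.

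Two more specific points. First, your central observation about residue tables is correct but understated in a way that matters: you write that columns $l,\dots,k$ ``change in a controlled, periodic way,'' but in fact the \emph{entire} residue table of $\lambda \cup l^{k+1-l}$ equals that of $\lambda$ --- column $l$ is unchanged because $(k+1-l) \equiv 0 \pmod{k+1-l}$, and then every later column is unchanged because the recursion $r_{ij} = (m_j + r_{i,j-1}) \bmod (k+1-j)$ propagates the equality rightward. The quotient tables differ only in column $l$. That stronger statement is what makes a clean induction possible. Second, the paper's induction is structured quite differently from yours: it inducts on $|\lambda|$ and, within a fixed size, by reverse induction on $\lambda_1$, peeling $h_{\lambda_1}$ off of $\lambda$ rather than peeling $h_l$ off of the rectangle. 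Writing $\lambda' = (\lambda_2,\dots,\lambda_\ell)$, Corollary~\ref{pieri} gives $s_\lambda^{(k)} = s_{\lambda'}^{(k)} h_{\lambda_1}$ only when $\lambda_1 = k$ (base of the inner induction); in the general inner step one instead expands both $s_{\lambda'}^{(k)} h_{\lambda_1}$ and $s_{\lambda' \cup l^{k+1-l}}^{(k)} h_{\lambda_1}$ via the residue-table Pieri rule, observes that \emph{the same} sets $S$ appear on both sides (because the residue tables agree), and cancels all but the $S = [\lambda_1]$ term using the outer hypothesis (smaller $|\lambda'|$) and the inner hypothesis (larger first part). That cancellation is bookkept exactly, not waved at. To salvage your proposal, you would need to abandon the ``peel $h_l$ from the rectangle'' plan and instead adopt a double induction of this kind, using the full residue-table invariance of $\lambda \mapsto \lambda \cup l^{k+1-l}$ as the engine.
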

\begin{proof}
 We prove the statement by induction on $n = |\lambda|$. For $\lambda = \emptyset$, this is the statement that $s_{l^{k+1-l}} = s_{l^{k+1-l}}^{(k)}$, which follows from the fact that $l^{k+1-l}$ is a $(k+1)$-core.\\
 Assume that we have proved the statement for all $k$-bounded partitions of size $<n$; we prove the statement for partitions of size $n$ by reverse induction on $\lambda_1$.\\
 If $n < k$, then the maximal possible $\lambda_1$ is $n$, when $\lambda = n$ and $s_\lambda^{(k)} = s_n = h_n$. By Corollary \ref{pieri}, $s_\lambda^{(k)} s_{l^{k+1-l}} = s_{l^{k+1-l}}^{(k)} h_n = \sum_{S} s_{(l^{k+1-l})^S}^{(k)}$, where the sum is over all subsets $S$ of $[k]$ of size $n$ for which $r_{i,j-1} > 0$ for all $i < j$, $i \notin S$, $j \in S$ for $R$ the residue table of $l^{k+1-l}$. But it is clear that the residue table of $l^{k+1-l}$ contains only zeros, so the condition is satisfied if and only if $S = [n]$. Note that $(l^{k+1-l})^{S} = l^{k+1-l} \cup n$ in this case. Therefore $s_\lambda^{(k)} s_{l^{k+1-l}} = s_{\lambda \cup l^{k+1-l}}^{(k)}$.\\
 If $n \geq k$, the maximal possible $\lambda_1$ is $k$. Write $\lambda'$ for $(\lambda_2,\ldots,\lambda_\ell)$. By Corollary \ref{pieri}, $s_\lambda^{(k)} = s_{\lambda'}^{(k)} h_k$, so
 $$s_\lambda^{(k)} s_{l^{k+1-l}} = (s_{\lambda'}^{(k)} h_k) s_{l^{k+1-l}} =  (s_{\lambda'}^{(k)} s_{l^{k+1-l}}) h_k = s_{\lambda' \cup l^{k+1-l}}^{(k)} h_k = s_{\lambda' \cup l^{k+1-l} \cup k}^{(k)} = s_{\lambda \cup l^{k+1-l}}^{(k)}.$$
 This completes the base of inner induction. Now let $\lambda_1 < \min(k,n)$, and again write $\lambda' = (\lambda_2,\ldots,\lambda_k)$. Since $|\lambda'| < |\lambda|$, we have
 $$s_{\lambda'}^{(k)} s_{l^{k+1-l}} = s_{\lambda' \cup l^{k+1-l}}^{(k)}$$
 by induction. Multiplication by $h_{\lambda_1}$ gives
 $$(s_{\lambda'}^{(k)} h_{\lambda_1}) s_{l^{k+1-l}} = s_{\lambda' \cup l^{k+1-l}}^{(k)}h_{\lambda_1},$$
 which yields, by Corollary \ref{pieri},
 \begin{equation} \label{eqn2} 
\left( \sum_{S \in \p S} s_{(\lambda')^S}^{(k)} \right) s_{l^{k+1-l}} = \sum_{S \in \p S'} s_{(\lambda' \cup l^{k+1-l})^S}^{(k)},
\end{equation}
 where $\p S$ (resp., $\p S'$) contains all $S \subseteq [k]$ of size ${\lambda_1}$ for which the entries $(i,j-1)$, $i < j$, $i \notin S$, $j \in S$, of the residue table of $\lambda'$ (resp., $\lambda' \cup l^{k+1-l}$) are $>0$. But $\lambda'$ and $\lambda' \cup l^{k+1-l}$ have the same residue tables, so $\p S = \p S'$. Clearly $[{\lambda_1}] \in \p S$, $(\lambda')^{[{\lambda_1}]} = \lambda$, $(\lambda' \cup l^{k+1-l})^{[{\lambda_1}]}=\lambda \cup l^{k+1-l}$, and if $S \neq [{\lambda_1}]$, then the largest part of $(\lambda')^S$ is $>{\lambda_1}$. By inner induction, the left-hand side of \eqref{eqn2} equals
 $$\left( s_\lambda^{(k)} + \!\!\!\sum_{[{\lambda_1}] \neq S \in \p S} \!\!\! s_{(\lambda')^S}^{(k)} \right) s_{l^{k+1-l}} = s_\lambda^{(k)} s_{l^{k+1-l}} + \!\!\!\sum_{[{\lambda_1}] \neq S \in \p S}\!\!\!s_{(\lambda')^S}^{(k)} s_{l^{k+1-l}} = s_\lambda^{(k)} s_{l^{k+1-l}} + \!\!\!\sum_{[{\lambda_1}] \neq S \in \p S}\!\!\!s_{(\lambda' \cup l^{k+1-l})^S}^{(k)},$$
 and clearly the right-hand side equals $s_{\lambda \cup l^{k+1-l}}^{(k)} + \sum_{[{\lambda_1}] \neq S \in \p S}s_{(\lambda' \cup l^{k+1-l})^S}^{(k)}$. After cancellations, we get $s_\lambda^{(k)} s_{l^{k+1-l}} = s_{\lambda \cup l^{k+1-l}}^{(k)}$.
\end{proof}

The theorem in particular implies that every $k$-Schur function can be written as the product of a $k$-Schur function corresponding to a $k$-irreducible partition, and Schur functions corresponding to rectangular partitions ${l^{k+1-l}}$.

\section{Murnaghan-Nakayama rule for $k$-Schur functions} \label{mn}

The Murnaghan-Nakayama rule has a generalization to $k$-Schur functions. There exists the concept of a \emph{$k$-ribbon} that plays the role of ribbons for Schur functions, in the sense that
$$s_\lambda^{(k)} p_n = \sum_\nu (-1)^{\hgt(\nu/\lambda)} s_\nu^{(k)},$$
for every $k$-bounded partition $\lambda$ and $n \leq k$, where the sum is over all $k$-bounded partitions $\nu$ for which $\nu/\lambda$ is a $k$-ribbon of size $n$, and $\hgt$ is an appropriately defined statistic for $k$-ribbons; see \cite[Corollary 1.4]{bsz}. The problem is that the definition of a $k$-ribbon (\cite[Definition 1.1]{bsz}) is extremely complicated; it involves not only the $k$-bounded partitions $\lambda$ and $\nu$ and the corresponding $(k+1)$-cores, but also the \emph{word} associated with ${\mf c^{(k)}}(\nu)/{\mf c^{(k)}}(\lambda)$ (which describes the contents of the cells added to ${\mf c^{(k)}}(\lambda)$ to obtain ${\mf c^{(k)}}(\nu)$).

\medskip

It turns out that residue tables again enable us to state the result in a unified and easy-to-check way. 

\medskip

Like in Theorems \ref{strong}, \ref{whs}, and \ref{wvs} (and see Remark \ref{remark}), we would like to answer the following question: given a multiset $S$ of size $n \leq k$, what conditions should a $k$-bounded partition $\lambda$ satisfy so that $s_{\lambda^S}^{(k)}$ appears in $s_\lambda^{(k)} p_n$, and with what coefficient? It turns out that the answer is the easiest when, for some $I$, we have
$$\sigma_1 \geq 1, \sigma_2 \geq 2, \sigma_3 \geq 2,\ldots,\sigma_I \geq 2, \sigma_{I+1} = \sigma_{I+2} = \ldots = 0.$$
Before we describe what data shows, let us remind the reader that the classical Murnaghan-Nakayama rule states that for such $S$, $s_{\lambda^S}$ appears in $s_\lambda p_n$ if and only if $m_i(\lambda) = \sigma_{i+1} - 1$ for $i = 1,\ldots,I-1$, and the coefficient is $(-1)^{n - I}$.

\medskip

Computer experimentation in the $k$-Schur function case shows the following. For $I = 1$ (when $S = \{1^{\sigma_1}\}$ and $\sigma_1 = n$), there is no condition to satisfy: for every $\lambda$, $s_{\lambda^S}^{(k)}$ appears in $s_\lambda^{(k)} p_n$ with coefficient $(-1)^{n-1}$. For $I = 2$, a $k$-irreducible partition $\lambda$ has to satisfy either $m_1(\lambda) \md k = \sigma_2-1$ (the ``classical'' answer) or $m_1(\lambda) \md k = k - \sigma_1$. For $I = 3$, a $k$-irreducible partition $\lambda$ has to satisfy one of six conditions (listed on the left in Table \ref{table1}), and for $I = 4$, one of $24$ conditions (six of which are listed on the right in the Table \ref{table1}).

\begin{table}[!ht]
{\small $$
\begin{array}{c|c||c|c|c}
 m_1(\lambda) \md k & m_2(\lambda) \md (k-1) &  m_1(\lambda) \md k& m_2(\lambda) \md (k-1) & m_3(\lambda) \md (k-2)\\ \hline
 \sigma_2-1 & \sigma_3-1 & \sigma_2-1 & \sigma_3-1 & \sigma_4-1 \\
 \sigma_2-1 & -\sigma_1-\sigma_2+k & \sigma_2+\sigma_3-1 & -\sigma_2+k-1 & -\sigma_1-\sigma_3+k-1 \\
 \sigma_2+\sigma_3-1 & -\sigma_2+k-1 & \sigma_2+\sigma_3+\sigma_4-1 & -\sigma_2-\sigma_4+k-1 & \sigma_4-1 \\
 -\sigma_1-\sigma_3+k & \sigma_3-1 & k-\sigma_1 & \sigma_1+\sigma_3-2 & \sigma_4-1 \\
 k-\sigma_1 & \sigma_1+\sigma_3-2 & -\sigma_1-\sigma_3+k & \sigma_3-1 & \sigma_1+\sigma_4-2 \\
 k-\sigma_1 & -\sigma_2+k-1 & -\sigma_1-\sigma_3-\sigma_4+k & \sigma_3+\sigma_4-1 & -\sigma_3+k-2
\end{array}$$}
\caption{$k$-ribbons in terms of $m_i(\lambda)$} \label{table1}
\end{table}

\medskip

The reader has probably guessed that there are $I!$ such conditions (compared to just one in the classical case!), but might be hard pressed to find a general pattern. A beautiful pattern emerges when we go to residue tables, however. Indeed, we have the following conjecture.

\begin{conjecture} \label{mn1}
 Given a multiset $S$ of size $n \leq k$ satisfying $\sigma_1 \geq 1, \sigma_i \geq 2$ for $2 \leq i \leq I$, $\sigma_{i} = 0$ for $i > I$, the coefficient of $s_{\lambda^S}^{(k)}$ in $s_\lambda^{(k)} p_n$ is nonzero if and only if for some permutation $\pi$ of $[I]$ we have, for $i = 2,\ldots,I$:
 \begin{align*}
  r_{1,i-1} &=\sum_{\three{j \in [I]}{j \neq 1}{\pi(j) \leq \pi(i)}} \sigma_j - \sum_{\three{j \in [i]}{j \neq 1}{\pi(j) \leq \pi(i)}} 1 & \mbox{if }& \pi(i) < \pi(1)\\
r_{1,i-1} &= k + 1 - \sigma_1 - \sum_{\three{j \in [I]}{j \neq 1}{\pi(j) > \pi(i)}} \sigma_j - \sum_{\three{j \in [i]}{j \neq 1}{\pi(j) \leq \pi(i)}} 1 & \mbox{if }& \pi(i) > \pi(1)
 \end{align*}
 Furthermore, when this condition is satisfied, the coefficient is independent of $\lambda$ and equals $(-1)^{n - I}$.
\end{conjecture}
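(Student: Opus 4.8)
The plan is to deduce Conjecture \ref{mn1} from the known Murnaghan--Nakayama rule for $k$-Schur functions, \cite[Corollary 1.4]{bsz}, by translating the definition of a $k$-ribbon and of its height $\hgt$ into the language of residue and quotient tables via the description of the bijection $\mf c^{(k)}$ obtained in Section \ref{s:kconj}. Fix $S$ as in the statement and set $\nu := \lambda^S$. Since $\nu$ is completely determined by $\lambda$ and $S$, the coefficient of $s_{\lambda^S}^{(k)}$ in $s_\lambda^{(k)} p_n$ is $0$ unless $\nu$ is a $k$-bounded partition and $\nu/\lambda$ is a $k$-ribbon, in which case it equals $(-1)^{\hgt(\nu/\lambda)}$; no cancellation can occur, so it suffices to characterize when $\nu/\lambda$ is a $k$-ribbon and to prove that then $\hgt(\nu/\lambda) = n - I$.

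First I would prove a multiset analogue of Lemma \ref{lemma}, expressing the residue and quotient tables $R', Q'$ of $\lambda^S$ in terms of $R, Q$ and $\sigma_1, \dots, \sigma_I$; since the support of $S$ lies in $[I]$, only the first $I$ columns are affected. Running the construction of $\mf c^{(k)}(\lambda)$ row by row and invoking Proposition \ref{meaning}, adding the $\sigma_i$ new cells in column $i$ inserts a ``block'' of rows into the core, and the entry $r_{1,i-1}$ records exactly how far this block sits below the cells of hook-length $\le k$ already present in column $i$. This is the structural reason why only the first row $(r_{11}, \dots, r_{1,I-1})$ of $R$ enters the final answer: the contents of the cells of $\mf c^{(k)}(\nu)/\mf c^{(k)}(\lambda)$, and hence the word associated with this skew core shape, depend only on $(r_{11}, \dots, r_{1,I-1})$ and $(\sigma_1, \dots, \sigma_I)$.

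The heart of the argument is the translation of \cite[Definition 1.1]{bsz}: $\nu/\lambda$ is a $k$-ribbon precisely when $\mf c^{(k)}(\nu)/\mf c^{(k)}(\lambda)$ is a broken ribbon whose associated word satisfies the condition of \cite{bsz}. I expect the $I$ blocks coming from columns $1, \dots, I$ to occupy $I$ distinct vertical positions in the core, so that recording their relative order yields a permutation $\pi$ of $[I]$ with $\pi(1)$ the rank of the column-$1$ block; the broken-ribbon-plus-word condition should then become the requirement that, in the order prescribed by $\pi$, consecutive blocks overlap in exactly one row. Unwinding this via the block description above and the identity $m_j(\lambda^{(k)}) = \sum_{i=1}^{k+1-j} q_{i,k+1-j}$ of Corollary \ref{kconj} should produce exactly the two displayed families of equations, the cases $\pi(i) < \pi(1)$ and $\pi(i) > \pi(1)$ corresponding to whether the column-$i$ block lies above or below the column-$1$ block. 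Finally one verifies that in every admissible configuration $\hgt(\nu/\lambda) = \sum_{i \in [I]} (\sigma_i - 1) = n - I$ -- each block contributes $\sigma_i - 1$ and each one-row overlap contributes nothing -- which gives the coefficient $(-1)^{n-I}$.

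The step I expect to be the main obstacle is precisely this last translation: correctly parsing the intricate definition of a $k$-ribbon and of $\hgt$ from \cite{bsz} -- which refers to the \emph{word} of $\mf c^{(k)}(\nu)/\mf c^{(k)}(\lambda)$, not merely to the two cores -- and making the block/permutation picture rigorous, in particular showing that every permutation $\pi$ of $[I]$ is realized and that $\hgt$ is genuinely independent of $\pi$. As consistency checks one should recover Table \ref{table1} for $I = 2, 3, 4$ (using that for $k$-irreducible $\lambda$ the diagonal entries $r_{ii} = m_i(\lambda) \md (k+1-i)$ and the first-row entries $r_{1,i-1}$ determine one another) and recover the classical Murnaghan--Nakayama rule from the single permutation $\pi_0$ with $\pi_0(1) = I$ and $\pi_0(j) = j-1$ for $2 \le j \le I$, for which the equations collapse to $r_{1,i-1} = \sigma_2 + \dots + \sigma_i - (i-1)$, i.e.\ to $m_i(\lambda) = \sigma_{i+1} - 1$ for $1 \le i \le I-1$ once $k$ is large. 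A possible alternative route -- induction on $n$ using $n\,h_n = \sum_{i=1}^{n} h_{n-i} p_i$ together with the Pieri rule in the form of Corollary \ref{pieri} -- appears to require bookkeeping at least as heavy as the direct core computation, since one still has to match the inductive output against the permutation-indexed conditions.
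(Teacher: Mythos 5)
This is labeled Conjecture \ref{mn1} in the paper, not a theorem: the paper offers no proof and explicitly says so at the end of Section \ref{mn}, where it states that "the resulting conjecture(s) would have to be proved. We leave this as an open problem"; the only support given is a computer check for all $k$-bounded partitions with $k\le 9$ and all $n\le k$. Your submission is therefore a research \emph{plan}, not a proof, and it cannot be compared against a proof in the paper because none exists.

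Evaluated as a plan, your outline is sensible and is almost certainly the route the author has in mind: reduce to the Murnaghan--Nakayama rule of \cite[Corollary 1.4]{bsz}, prove a multiset analogue of Lemma \ref{lemma} computing $R(\lambda^S)$ and $Q(\lambda^S)$ from $R(\lambda)$, $Q(\lambda)$ and $\sigma_1,\dots,\sigma_I$, then translate the $k$-ribbon condition of \cite[Definition 1.1]{bsz} and its height statistic into a condition on the first row of $R$. Your observation that the coefficient of a fixed $s_{\lambda^S}^{(k)}$ has no cancellation to worry about is correct, and your two consistency checks (recovering Table \ref{table1} for small $I$, and recovering the classical rule from the permutation $\pi_0(1)=I$, $\pi_0(j)=j-1$) both hold up. However, the substantive content of the conjecture is exactly the step you flag as the main obstacle: making the block/permutation picture rigorous, i.e.\ showing that the word attached to $\mf c^{(k)}(\lambda^S)/\mf c^{(k)}(\lambda)$ satisfies the (very intricate) condition of \cite{bsz} if and only if some $\pi\in S_I$ satisfies the displayed equations, and that the height is then $n-I$. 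You have not carried this out, and the paper does not carry it out either; until that translation is done the statement remains open. So there is a genuine gap, but it is the intended open problem, not an oversight.

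One smaller point worth recording: before invoking \cite{bsz} you must also check that $\lambda^S$ is in fact a $k$-bounded partition under your hypotheses on $S$ and $\lambda$ (otherwise $s_{\lambda^S}^{(k)}$ is not defined), and that condition should fall out of the same first-row analysis; it would be cleaner to state it explicitly as part of the "only if" direction rather than absorb it silently into "the coefficient is zero."
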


\begin{example}
 For $I = 4$ and $\pi \in \set{4123,3214,4312,3412,2341,1243}$, we get the first rows of residue tables corresponding to partitions on the right-hand side of Table \ref{table1}. For example, for $\pi = 3214$, the conjecture gives conditions
 $$r_{11} = \sigma_2+\sigma_3 - 1, \qquad r_{12} = \sigma_3 - 1, \qquad r_{13} = k - \sigma_1 - 2,$$
 and for a ($k$-irreducible) partition $\lambda$ with $m_1(\lambda) \md k= \sigma_2+\sigma_3-1$, $m_2(\lambda) \md (k-1) = -\sigma_2+k-1$, $m_3(\lambda) \md (k-2)= -\sigma_1-\sigma_3+k-1$, we have
 \begin{align*} 
 r_{11} &= m_1(\lambda) \md k = \sigma_2+\sigma_3 - 1,\\
 r_{12} &= (m_2(\lambda) + r_{11}) \md (k-1) = (-\sigma_2+k-1+\sigma_2+\sigma_3-1) \md (k-1) = \sigma_3-1,\\
 r_{13} &= (m_3(\lambda) + r_{12}) \md (k-2) = (-\sigma_1-\sigma_3+k-1 + \sigma_3 - 1) \md (k-2) = k - \sigma_1 - 2.
 \end{align*}
\end{example}

Of course, since $m_i(\lambda)$ is, modulo $k+1-i$, equal to $r_{1i}-r_{1,i-1}$, we could rephrase Conjecture \ref{mn1} to give conditions on $m_i(\lambda)$ directly; these conditions would be only slightly more complicated that the ones given above. However, the approach via residue tables is more than justified when we observe what happens when we allow $\sigma_i = 1$ for $i = 2,\ldots,I$. Indeed, the conditions on $m_i(\lambda)$ become completely intractable, while the equalities on the first row of the residue table simply change to equalities and inequalities.

\begin{conjecture} \label{mn2}
 Given a multiset $S$ of size $n \leq k$ satisfying $\sigma_i \geq 1$ for $1 \leq i \leq I$, $\sigma_{i} = 0$ for $i > I$, the coefficient of $s_{\lambda^S}^{(k)}$ in $s_\lambda^{(k)} p_n$ is nonzero if and only if for some
 \begin{itemize}
  \item[(R1)] set $U \subseteq [I]$ satisfying $\sigma_i > 1 \Rightarrow i \in U$,
  \item[(R2)] permutation $\pi$ of $U$, and
  \item[(R3)] map $\varphi \colon [I] \setminus U \to U$ satisfying $\varphi(i) < i$ for all $i$,
 \end{itemize}
 we have, for $i = 2,\ldots,I$:
{\tiny
 \begin{align*}
  r_{1,i-1} &= \!\!\!\sum_{\three{j \in U}{j \neq 1}{\pi(j) \leq \pi(i)}}\!\!\! \sigma_j - \!\!\!\sum_{\three{j \in U \cap [i]}{j \neq 1}{\pi(j) \leq \pi(i)}}\!\!\! 1 - \!\!\!\!\!\sum_{\three{j \in [i-1] \setminus U}{\varphi(j) \neq 1}{\pi(\varphi(j)) \leq \pi(i)}}\!\!\!\!\! 1 & \mbox{if } & {\pi(i) < \pi(1)}\\
  r_{1,i-1} &= k + 1 - \sigma_1 - \!\!\!\!\!\sum_{\three{j \in U }{j \neq 1}{\pi(j) > \pi(i)}}\!\!\!\!\! \sigma_j - \!\!\!\!\!\sum_{\three{j \in U \cap [i] }{j \neq 1}{\pi(j) \leq \pi(i)}}\!\!\!\!\! 1 - \!\!\!\!\!\!\sum_{\three{j \in [i-1] \setminus U}{\varphi(j) \neq 1}{\pi(\varphi(j)) \leq \pi(i)}}\!\!\!\!\!\!\!\! 1  & \mbox{if } & {\pi(i) > \pi(1)}\\
   \!\!\!\sum_{\three{j \in U }{j \neq 1}{\pi(j) < \pi(\varphi(i))}}\!\!\! \sigma_j - \!\!\!\sum_{\three{j \in U \cap [i]}{j \neq 1}{\pi(j) \leq \pi(\varphi(i))}}\!\!\! 1 - \!\!\!\!\!\sum_{\three{j \in [i-1] \setminus U}{\varphi(j) \neq 1}{\pi(\varphi(j)) < \pi(\varphi(i))}}\!\!\!\!\! 1 <
  r_{1,i-1} &< \!\!\!\sum_{\three{j \in U }{j \neq 1}{\pi(j) \leq \pi(\varphi(i))}}\!\!\! \sigma_j - \!\!\!\sum_{\three{j \in U \cap [i] }{j \neq 1}{\pi(j) \leq \pi(\varphi(i))}}\!\!\! 1 - \!\!\!\!\!\sum_{\three{j \in [i-1] \setminus U}{\varphi(j) \neq 1}{\pi(\varphi(j)) \leq \pi(\varphi(i))}}\!\!\!\!\! 1 & \mbox{if } & {\pi(\varphi(i)) < \pi(1)}\\
  k+1 - \sigma_1 - \!\!\!\!\!\!\!\!\!\sum_{\three{j \in U }{j \neq 1}{\pi(j) \geq \pi(\varphi(i))}}\!\!\!\!\!\!\! \sigma_j - \!\!\!\!\!\!\!\sum_{\three{j \in U \cap [i]}{j \neq 1}{\pi(j) \leq \pi(\varphi(i))}}\!\!\!\!\!\!\! 1 - \!\!\!\!\!\!\sum_{\three{j \in [i-1] \setminus U}{\varphi(j) \neq 1}{\pi(\varphi(j)) < \pi(\varphi(i))}}\!\!\!\!\!\!\!\!\!\!\! 1 <
  r_{1,i-1} &< k+1 - \sigma_1 - \!\!\!\!\!\!\!\!\!\sum_{\three{j \in U }{j \neq 1}{\pi(j) > \pi(\varphi(i))}}\!\!\!\!\!\!\! \sigma_j - \!\!\!\!\!\!\!\sum_{\three{j \in U \cap [i] }{j \neq 1}{\pi(j) \leq \pi(\varphi(i))}}\!\!\!\!\!\!\! 1 - \!\!\!\!\!\!\sum_{\three{j \in [i-1] \setminus U}{\varphi(j) \neq 1}{\pi(\varphi(j)) \leq \pi(\varphi(i))}}\!\!\!\!\!\!\!\!\!\!\! 1 & \mbox{if } & {\pi(\varphi(i)) > \pi(1)}\\
  k+1 - \sigma_1 - \!\!\!\sum_{j \in U \cap [i]}\!\!\! 1 - \!\!\!\!\!\sum_{\genfrac{}{}{0pt}{}{j \in [i-1] \setminus U}{\varphi(j) \neq 1}}\!\!\!\!\! 1 <
  r_{1,i-1} &  & \mbox{if } & { \varphi(i) = 1}
 \end{align*}}
  Furthermore, when this condition is satisfied, the coefficient is independent of $\lambda$ and equals $(-1)^{n - I}$.
\end{conjecture}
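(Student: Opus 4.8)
Since the Murnaghan--Nakayama rule for $k$-Schur functions is already a theorem (\cite[Corollary 1.4]{bsz}), the coefficient of $s_{\lambda^S}^{(k)}$ in $s_\lambda^{(k)}p_n$ equals $(-1)^{\hgt(\lambda^S/\lambda)}$ when $\lambda^S/\lambda$ is a $k$-ribbon of size $n$ in the sense of \cite[Definition 1.1]{bsz} (and $0$ otherwise), where $\hgt$ is the $k$-ribbon statistic of \cite{bsz}. Thus Conjecture \ref{mn2} is equivalent to two purely combinatorial assertions: that, for a multiset $S$ with $\sigma_i\ge 1$ for $i\le I$ and $\sigma_i=0$ for $i>I$, the skew core shape $\mf{c}^{(k)}(\lambda^S)/\mf{c}^{(k)}(\lambda)$ is a $k$-ribbon of size $n$ precisely when some triple $(U,\pi,\varphi)$ satisfying (R1)--(R3) makes the displayed equalities and strict double inequalities on the first row $r_{11},\dots,r_{1,I-1}$ of the residue table of $\lambda$ hold; and that in that case $\hgt(\lambda^S/\lambda)=n-I$. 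The plan is to prove both by unwinding the definition of a $k$-ribbon with the residue/quotient-table description of $\mf{c}^{(k)}$ provided by Proposition \ref{meaning}.

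First I would record how $\mf{c}^{(k)}(\lambda^S)$ sits relative to $\mf{c}^{(k)}(\lambda)$ in terms of the residue and quotient tables, extending Lemma \ref{lemma} from subsets $S\subseteq[k]$ to the multisets at hand (straightening $\lambda^S$ to a partition, as in the statement, one may work with a generalized multiset). By Proposition \ref{meaning} the columns of $\mf{c}^{(k)}(\lambda)$ of length $\le k$, together with their prescribed ordering, are read off from the columns of the residue table, and the columns of length $>k$ from the quotient table; so tracking the cells added to $\mf{c}^{(k)}(\lambda)$ when the cells of $S$ are inserted amounts to running the recursion $r_{1j}=(m_j+r_{1,j-1})\bmod(k+1-j)$ on $\lambda^S$ instead of $\lambda$ and recording at which columns the running values ``wrap around'' their moduli. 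This bookkeeping is exactly what should produce a triple $(U,\pi,\varphi)$: $U$ records the levels $i\le I$ that create genuinely new long rows of $\mf{c}^{(k)}(\lambda^S)$ (levels with $\sigma_i\ge 2$ necessarily contribute nonzero entries to the quotient table and so lie in $U$, matching (R1)); $\pi$ records the order in which the relevant partial sums reach their moduli (the permutation already visible in Conjecture \ref{mn1}); and $\varphi$ records, for a level $i\notin U$, the earlier level whose new row absorbs its single extra cell, with $\varphi(i)<i$ forced. The five cases in the statement then correspond to whether $i\in U$ and to the position of $\pi(i)$ or $\pi(\varphi(i))$ relative to $\pi(1)$: the equalities are ``wraps at this column'' conditions and the strict double inequalities are ``has not yet wrapped'' conditions, which is precisely the difference between a forced $U$-level ($\sigma_i\ge 2$, as in Conjecture \ref{mn1}) and an optional level $i\in[I]\setminus U$ controlled by $\varphi$.

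With this dictionary in hand, the next step is to feed it into the $k$-ribbon condition itself. That condition is phrased in \cite{bsz} through the \emph{word} of $\mf{c}^{(k)}(\lambda^S)/\mf{c}^{(k)}(\lambda)$ --- the sequence of residues modulo $k+1$ of the added cells --- together with a connectivity requirement; via Proposition \ref{meaning} those residues are visible in the rows of the residue table of $\lambda$, so the lattice/ballot-type constraint on the word should become, after substitution, exactly the solvability of the system in $(U,\pi,\varphi)$ above, exhibiting the disjunction over triples as the precise description of the $k$-ribbon case. The height is then computed as usual: $\hgt$ is the number of rows the relevant skew shape occupies minus the number of components, and since the cells of $S$ spread over the $I$ ``new levels'' $1,\dots,I$ beyond the part already overlapping $\mf{c}^{(k)}(\lambda)$, this should come out to $n-I$. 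Carrying out this identification, and separately checking the height, completes the argument.

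The main obstacle is this last step: the definition of a $k$-ribbon in \cite{bsz} is recursive and genuinely intricate --- it involves not merely $\lambda$, $\lambda^S$ and their $(k+1)$-cores but the full word of the skew core shape --- so the heart of the proof is a careful and somewhat lengthy case analysis showing that this recursive condition, rewritten in residue-table coordinates, collapses to the closed-form system parametrized by $(U,\pi,\varphi)$. I expect it is prudent to settle Conjecture \ref{mn1} first, where $U=[I]$ is forced and $\varphi$ is empty so that only a permutation is in play and the bookkeeping is much lighter, and then to deduce Conjecture \ref{mn2} from it, either by a degeneration argument in which a part with $\sigma_i\ge 2$ is allowed to ``split'' and one tracks how the corresponding equality opens up into the strict double inequality governed by $\varphi$, or by redoing the case analysis directly with the extra levels present. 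An alternative route that avoids \cite{bsz} is to expand $p_n=\sum_{m=0}^{n-1}(-1)^m(n-m)\,h_{n-m}e_m$, apply the $k$-Pieri rule \eqref{kpieri} in the form of Corollary \ref{pieri} and the conjugate $k$-Pieri rule \eqref{kconjpieri} in the form of Theorem \ref{wvs}, and track residue tables through the composition by combining Lemma \ref{lemma} with its vertical-strip analogue; but then the difficulty merely migrates to constructing an explicit sign-reversing involution on the resulting signed, weighted family of pairs of strips that cancels everything except the terms meeting the $(U,\pi,\varphi)$ conditions, which does not look easier.
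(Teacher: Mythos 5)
The statement you are attempting to prove is a \emph{conjecture} in the paper, not a theorem; the author offers no proof. The only evidence in the paper is computer verification (``The conjectures were checked with a computer for all $k$-bounded partitions for $k$ up to $9$ and for all $n\leq k$''), and the paragraph immediately following explicitly says ``the resulting conjecture(s) would have to be proved. We leave this as an open problem.'' So there is no argument in the paper to compare your proposal against.

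On its own merits, your proposal is an honest research plan rather than a proof, and the hard work is entirely deferred. You correctly locate the natural strategy: invoke \cite[Corollary 1.4]{bsz} so that the analytic statement reduces to the combinatorial claim that $\mf c^{(k)}(\lambda^S)/\mf c^{(k)}(\lambda)$ is a $k$-ribbon of size $n$ exactly when some $(U,\pi,\varphi)$ satisfying (R1)--(R3) makes the displayed system hold, and then attack that claim via the residue/quotient-table tracking of Proposition~\ref{meaning} and Lemma~\ref{lemma}. But you stop precisely at the step the author flags as the open problem: showing that the recursive, word-based definition of a $k$-ribbon in \cite[Definition 1.1]{bsz} collapses, after translation into residue-table coordinates, to the closed-form system parametrized by $(U,\pi,\varphi)$. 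You call this ``a careful and somewhat lengthy case analysis'' and leave it undone. Three further items would need substance before the argument closes: (i) Lemma~\ref{lemma} is stated and proved only for subsets $S\subseteq[k]$, whereas the $S$ here is a genuine multiset, so you would need to state and prove the multiset extension you invoke; (ii) the $k$-ribbon height statistic of \cite{bsz} is not simply ``rows occupied minus components''---it is defined from the word of the skew core shape---so the claimed identity $\hgt = n-I$ has to be proved against that definition, not assumed by analogy with the classical case; and (iii) the alternative route via $p_n=\sum_{m\geq 0}(-1)^m(n-m)h_{n-m}e_m$ with the Pieri and conjugate Pieri rules is likewise only gestured at, since the required sign-reversing involution is not constructed. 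In short, the framework you sketch is plausible, but no step is completed, so the conjecture remains open.
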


\begin{remark}
 Note that $\pi(i)$ only makes sense if $i \in U$, and $\varphi(i)$ (and $\pi(\varphi(i))$) only if $i \notin U$. Also note that (R3) implies that $1 \in U$.\\
 If $\sigma_i \geq 2$ for $i = 2,\ldots,I$, then $U = [I]$ and $\varphi$ is the empty map, so Conjecture \ref{mn2} is a generalization of Conjecture \ref{mn1}.
\end{remark}

\begin{example}
 Take $S = \set{\sigma_1,\sigma_2,\sigma_3,1,\sigma_5}$ with $\sigma_2,\sigma_3,\sigma_5 \geq 2$. For $U$, we can choose either $U = \set{1,2,3,4,5}$ (which gives the same conditions as Conjecture \ref{mn1}) or $U = \set{1,2,3,5}$. For, say, $U = \set{1,2,3,5}$, $\pi = 2135$, $\varphi(4) = 3$, we get
 $$r_{11} = \sigma_2-1, \quad r_{12} = k-\sigma_1-\sigma_5-1, \quad k-\sigma_1-\sigma_3-\sigma_5-1< r_{13} < k-\sigma_1-\sigma_5-1, \quad r_{14} = k-\sigma_1-3.$$
\end{example}

The conjectures were checked with a computer for all $k$-bounded partitions for $k$ up to $9$ and for all $n \leq k$.

\medskip

Of course, there are two more crucial steps that need to be done before one can truly say that residue tables are the right way to describe $k$-ribbons. First, we would need to describe the conditions for an arbitrary $S$, \textit{i.e.} one that can have $\sigma_i = 0$, $\sigma_j > 0$ for $i < j$; some preliminary work has been done in this direction and it certainly seems feasible. And secondly, the resulting conjecture(s) would have to be proved. We leave this as an open problem.

\section{Further conjectures and open problems} \label{further}

\subsection{Multiplication with an almost-$k$-rectangle and quantum Monk's formula} \label{almost}

The coefficients in the expansion of a product of a $k$-Schur function with a $k$-Schur function corresponding to a the partition $l^{k+1-l}$ with a cell removed are also either $0$ or $1$. The following was stated as a conjecture by the author and proved by Luc Lapointe and Jennifer Morse (private communication; the proof is presented here with permission).

\begin{proposition} \label{jenn}
 Let $\nu$ denote the partition with $k-l$ copies of $l$ and one copy of $l-1$ for $1 \leq l \leq k$, and let $\lambda$ be a $k$-bounded partition with residue table $R = (r_{ij})_{1 \leq i \leq j \leq k}$. Then $s_\lambda^{(k)} s_\nu = \sum_{I,J} s_{\lambda^{S(I,J)}}^{(k)}$, where the sum is over all $I,J$, $I \leq l$, $J > l$, for which $r = r_{I,J-1}$ is strictly smaller than all entries to its left in $R$, and $S(I,J)$ is the multiset $\set{1^{k+1-l},\ldots,(I-1)^{k+1-l},I^{k-l-r},(I+1)^{k+1-l},\ldots,l^{k+1-l},J^r }$.
\end{proposition}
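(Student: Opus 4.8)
The plan is to reduce the claimed product rule to results already established in the paper, chiefly the multiplicativity property \eqref{eqn}, the Pieri rule (Corollary \ref{pieri}), and the strong-cover description (Theorem \ref{strong}). First I would exploit the fact that $\nu = (l^{k-l}, l-1)$ differs from the $k$-rectangle $l^{k+1-l}$ by a single cell in the last row, so that $s_\nu$ can be expressed via a Murnaghan--Nakayama- or Pieri-type expansion involving $s_{l^{k+1-l}}$; concretely, writing $s_\nu$ in terms of $s_{l^{k+1-l}}$ and a lower piece should let me factor the product as $(s_\lambda^{(k)} \cdot \text{(something small)}) \cdot s_{l^{k+1-l}}$, and then apply Theorem from Section \ref{application} to absorb the $k$-rectangle. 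This is the same maneuver as in the proof of \eqref{eqn}: multiplication by a $k$-rectangle is ``transparent'' on the level of $k$-bounded partitions (it just unions in $l^{k+1-l}$ after passing to the $k$-conjugate), so the combinatorial content must live entirely in the small factor.

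The cleaner route, which I would pursue in parallel, is to recognize that $s_\nu = s_{l^{k-l},l-1}$ is, up to the $k$-rectangle, essentially dual to a single box: in the quantum cohomology / affine Grassmannian dictionary, multiplication by $s_{l^{k-l},l-1}$ corresponds (after the rational substitution relating $k$-Schur functions to quantum Schubert polynomials, cf.\ \cite{ls}) to the quantum Monk class, whose expansion is governed exactly by strong covers. So I would set up a bijection between the terms $s_{\lambda^{S(I,J)}}^{(k)}$ appearing on the right-hand side and the pairs $(I,J)$ with $I \le l < J$ and $r_{I,J-1}$ strictly smaller than all entries to its left --- the very same index set that Theorem \ref{strong} attaches to strong covers of $\lambda$. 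The key computation is to check that the multiset $S(I,J) = \{1^{k+1-l},\dots,(I-1)^{k+1-l}, I^{k-l-r}, (I+1)^{k+1-l},\dots, l^{k+1-l}, J^r\}$ is precisely what one obtains by first taking the strong cover move $\lambda^{\{I^{-r-1}, J^r\}}$ dictated by Theorem \ref{strong} and then unioning in the $k$-rectangle $l^{k+1-l}$ --- i.e.\ $\lambda^{S(I,J)} = (\lambda^{\{I^{-r-1},J^r\}})^{(k)} \cup l^{k+1-l}$ up to $k$-conjugation --- so that \eqref{eqn} converts the strong-cover expansion of $s_\lambda^{(k)} s_{l^{k-l},l-1}$ into the stated rectangle-shifted form.

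Carrying this out, the steps in order are: (1) write a clean identity expressing $s_\nu$ via $s_{l^{k+1-l}}$ and an auxiliary degree-one-type factor, most likely $s_\nu \cdot (\text{k-Schur for the single box complement}) = s_{l^{k+1-l}} \cdot(\dots)$, using the conjugate Pieri rule \eqref{kconjpieri} in reverse; (2) apply the Section \ref{application} theorem to pull $s_{l^{k+1-l}}$ through; (3) identify the residual product as a sum over strong covers of the appropriate ``shifted'' partition and invoke Theorem \ref{strong}; (4) translate the strong-cover index $(I,J)$ and multiplicity data back through the $k$-conjugate bijection $\mf b^{(k)}, \mf c^{(k)}$ to recover $S(I,J)$ on the nose, checking in particular that the coefficient is $1$ (not the multiplicity $q_{I,I}+\cdots+q_{I,J-1}$ from Theorem \ref{strong}) because the rectangle-union collapses the multiplicity, and that the constraint $I \le l < J$ emerges from the geometry of where the removed cell of $\nu$ sits.

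\textbf{Main obstacle.} The hard part will be step (4): verifying that the strong-cover data attached to $\lambda$ (or to $\lambda$ with a rectangle adjoined) matches the explicit multiset $S(I,J)$ after passing through $\mf c^{(k)}$ and back, and in particular explaining why the strong-cover \emph{multiplicities} $q_{I,I}+\cdots+q_{I,J-1}$ wash out to give coefficient exactly $1$ here. This requires a careful bookkeeping of how column lengths of the $(k+1)$-core change when one simultaneously performs the ribbon move of Theorem \ref{strong} and adjoins $l^{k+1-l}$; I expect this to hinge on a lemma, in the spirit of Lemma \ref{lemma} and Proposition \ref{meaning}, describing how the residue and quotient tables transform under $\lambda \mapsto \lambda \cup l^{k+1-l}$ (they should be essentially unchanged in the relevant entries, since the rectangle is a $(k+1)$-core), and this is precisely the technical heart that the Lapointe--Morse argument must supply.
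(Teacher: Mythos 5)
Your proposal circles the right ingredients---strong covers (Theorem \ref{strong}), the $k$-rectangle identity \eqref{eqn}, and residue/quotient-table bookkeeping---but it misses the single idea that makes the argument close: the paper's proof runs through the skew operator $p_1^\perp$, the adjoint of multiplication by $p_1$ with respect to the pairing under which $k$-Schur and dual $k$-Schur functions are dual. That operator is a derivation, and by \cite[Theorem 2.2.22]{llmssz} it sends $s_\mu^{(k)}$ to the multiplicity-weighted sum of $s^{(k)}$'s over strong covers of $\mu$; in particular $p_1^\perp(s_{l^{k+1-l}}) = s_\nu$. There is no multiplicative identity relating $s_\nu$ and $s_{l^{k+1-l}}$ of the shape you propose in step (1): you cannot write $s_\nu \cdot (\text{small factor}) = s_{l^{k+1-l}} \cdot (\cdots)$, and a Pieri or Murnaghan--Nakayama expansion of $s_\nu$ in terms of $s_{l^{k+1-l}}$ is not the right move. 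The correct relation is \emph{lowering}, not multiplication, and without it steps (3) and (4) of your plan do not get off the ground.

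Once you have the derivation, the proof is short: applying $p_1^\perp$ to $s_\lambda^{(k)} s_{l^{k+1-l}} = s_{\lambda \cup l^{k+1-l}}^{(k)}$ and rearranging yields
$$s_\nu s_\lambda^{(k)} = p_1^\perp\bigl(s_{\lambda \cup l^{k+1-l}}^{(k)}\bigr) - s_{l^{k+1-l}}\, p_1^\perp\bigl(s_\lambda^{(k)}\bigr),$$
and both terms on the right expand, via Theorem \ref{strong} and \eqref{eqn}, over exactly the same index set of pairs $(I,J)$ because $\lambda$ and $\lambda \cup l^{k+1-l}$ have the \emph{same} residue table. The coefficient-1 issue you flag as the main obstacle then resolves itself: the quotient table of $\lambda \cup l^{k+1-l}$ is that of $\lambda$ with $1$ added to every entry of column $l$, so the two multiplicities $q'_{I,I}+\cdots+q'_{I,J-1}$ and $q_{I,I}+\cdots+q_{I,J-1}$ cancel to $0$ unless $I \le l < J$, in which case their difference is exactly $1$. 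This is a subtraction, not a ``collapse''; framing it as the latter is what makes the obstacle look harder than it is. Finally, the identification $(\lambda^{\{I^{-r-1},J^r\}}) \cup l^{k+1-l} = \lambda^{S(I,J)}$ is a direct comparison of part-multiplicities and involves no passage through $\mf c^{(k)}$ or $k$-conjugation at all, so the $k$-conjugate you inserted into that identity is spurious. In short: without the derivation $p_1^\perp$, your plan has a genuine gap; with it, the rest of your outline becomes unnecessary.
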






The importance of the proposition is that it can be used to to deduce the quantum Monk's formula via the isomorphism of \cite{ls} that identifies quantum Schubert polynomials and k-Schur functions. See \cite{monk}.

\begin{example}
 Let us compute $s^{(4)}_{32211} \cdot s_{221}$ by using the proposition for $k = 4$, $l = 2$, $\lambda = 32211$. The residue table of $\lambda$ is $\begin{smallmatrix}
    2 & \underline{1} & \underline 0 & 0 \\
   & \underline 2 & \underline 1 & \underline 0 \\
   &   & 1 & 0 \\
   &   &   & 0
   \end{smallmatrix}$; the entries in rows up to $l  = 2$ and in columns from $l = 2$ onwards which are strictly smaller than the entries to their left	 are underlined. Each such entry yields one term in $s^{(4)}_{32211} \cdot s_{221}$, and since $\lambda^{S(1,3)} = \lambda^{\set{1,2^3,3}} = 332222$, $\lambda^{S(1,4)} = \lambda^{\set{1^2,2^3}} = 3222221$, $\lambda^{S(2,3)} = \lambda^{\set{1^3,3^2}} = 33311111$, $\lambda^{S(2,4)} = \lambda^{\set{1^3,2,4}} = 42221111$, $\lambda^{S(2,5)} = \lambda^{\set{1^3,2^2}} = 32222111$, this means that $s^{(4)}_{32211} \cdot s_{221}$ equals
  $$s^{(4)}_{332222}+s^{(4)}_{3222221}+s^{(4)}_{33311111}+s^{(4)}_{42221111}+s^{(4)}_{32222111}.$$
\end{example}

\begin{proof}[Proof of Proposition \ref{jenn}]
 Let $\langle \cdot , \cdot \rangle_k$ be the scalar product with respect to which the $k$-Schur functions are dual to the dual $k$-Schur functions (see \cite[\S 2.2.2]{llmssz}). Let $p_1^\perp$ be such that $\langle p_1^\perp f,g \rangle_k = \langle f, p_1 g \rangle_k$. It easily follows from the definition and \cite[Proposition 7.9.3]{stanley} (which also holds for $\langle \cdot , \cdot \rangle_k$) that $p_1^\perp p_\lambda = m_1(\lambda) p_{\bar \lambda}$, where $\bar \lambda$ is $\lambda$ with one copy of $1$ removed (if $m_1(\lambda) = 0$, we interpret this as $p_1^\perp p_\lambda = 0$). Therefore $p_1^\perp (p_\lambda p_\mu) = (p_1^\perp p_\lambda) p_\mu + p_\lambda (p_1^\perp p_\mu)$, and so $p_1^\perp (fg) = (p_1^\perp f)g+f(p_1^\perp g)$ for all $f,g$.\\
 It immediately follows from duality and \cite[Theorem 2.2.22]{llmssz} that $p_1^\perp(s_\lambda^{(k)})$ is the sum of all $s_\mu^{(k)}$'s (with multiplicities) such that $\lambda$ is a strong cover of $\mu$. For example, $p_1^\perp(s_{l^{k+1-l}}) = s_\nu$ (in the notation of the proposition), as the residue table of $l^{k+1-l}$ is all zeros, the only elements of the residue table that are strictly smaller than all the elements to the left are the ones on the diagonal, and only the one in position $(l,l)$ has a non-zero multiplicity (equal to $q_{ll} = 1$); see Theorem \ref{strong}. That means that
 $$p_1^\perp (s_{l^{k+1-l}} s_\lambda^{(k)}) = p_1^\perp(s_{l^{k+1-l}}) s_\lambda^{(k)} + s_{l^{k+1-l}} p_1^\perp(s_\lambda^{(k)}) = s_\nu s_\lambda^{(k)} + s_{l^{k+1-l}} p_1^\perp(s_\lambda^{(k)}),$$
 and
 $$s_\nu s_\lambda^{(k)} = p_1^\perp(s_{\lambda \cup l^{k+1-l}}^{(k)}) - s_{l^{k+1-l}} p_1^\perp(s_\lambda^{(k)}).$$
 Obviously, the residue tables of $\lambda$ and $\lambda \cup l^{k+1-l}$ are equal, and the quotient table of $\lambda^{k+1-l}$ is that of $\lambda$ with $1$ added to every entry in column $l$. Now take a pair $(I,J)$ so that $r_{I,J-1}$ is smaller than all the elements to its left in the residue table $R = (r_{ij})$ of $\lambda$ (or $\lambda \cup l^{k+1-l}$). If $I > l$ or $J \leq l$, the elements in positions $(I,I),\ldots,(I,J-1)$ in the quotient tables of $\lambda$ and $\lambda \cup l^{k+1-l}$ are equal, so the corresponding $k$-Schur functions cancel out. If $I \leq l$ and $J > l$, the elements in positions $(I,I),\ldots,(I,l-1),(I,l+1),\ldots,(I,J-1)$ of the quotient tables are equal, and the element in position $(I,l)$ is greater by $1$ in the quotient table of $\lambda \cup l^{k+1-l}$. After cancellations, we get precisely one copy of the corresponding $s_\nu^{(k)}$.
\end{proof}

\begin{remark}
 The proof of the proposition illustrates that ``$\lambda$ covers $\mu$ in the strong order with multiplicity $0$'' is fundamentally different from ``$\lambda$ does not cover $\mu$ in the strong order''. Indeed, when $\lambda$ covers $\mu$ in the strong order with multiplicity $0$, $\lambda \cup l^{k+1-l}$ covers $\mu \cup l^{k+1-l}$ in the strong order with multiplicity $1$ for an appropriate $l$, which was crucial in the proof. In other words, it is fundamentally better to describe strong covers in terms of residue and quotient tables than in terms of cores.
\end{remark}

\begin{problem}
 Describe the conditions under which $s^{(k)}_{\lambda^S}$ appears in $s^{(k)}_\lambda s_\mu$, where $\mu$ is a partition with $k-l$ copies of $l$ and one copy of $l'$ for $1 \leq l \leq k$ and $0 \leq l' \leq l$.
\end{problem}

For $l' = l$, this is \eqref{eqn}, and for $l' = l-1$, this is Proposition \ref{jenn}.

\subsection{Splitting of $k$-bounded partitions} \label{split}

For a $k$-bounded partition $\lambda$, denote by $\partial_k(\lambda)$ the cells of $\mf c^{(k)}(\lambda)$ with hook-length $\leq k$. If $\partial_k(\lambda)$ is not connected, we say
that $\lambda$ \emph{splits}. Each of the connected components of $\partial_k(\lambda)$ is a horizontal translate of $\partial_k(\lambda^i)$ for some $k$-bounded partition $\lambda^i$. Call $\lambda^1,\ldots,\lambda^I$ the \emph{components} of $\lambda$. It is easy to see that if $\lambda$ splits into  $\lambda^1,\ldots,\lambda^I$, there must be $J_1,\ldots,J_I$ so that $\lambda^i = \langle (J_{i-1}+1)^{m_{J_{i-1}+1}(\lambda)},\ldots,J_i^{m_{J_i}(\lambda)}\rangle$, \textit{i.e.}, for each $i$, all copies of $i$ correspond to the same component. We say that $\lambda$ \emph{splits at $J_1,\ldots,J_I$}.

\begin{example}
 Figure \ref{fig:split} depicts $\partial_5(54433211)$.

\begin{figure}[!ht]
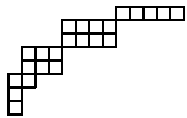

 \begin{center}

\ytableausetup{boxsize=0.4em}
  \begin{center}
 \ydiagram{8+5,4+4,4+4,1+3,1+3,2,1,1}
 \end{center}
\ytableausetup{nosmalltableaux}

\end{center}
 \caption{Splitting of a $k$-partition.} \label{fig:split}
\end{figure}
 It follows that $\lambda$ splits into components $\lambda^1=33211,\lambda^2=44,\lambda^3=5$. In other words, $\lambda$ splits at $3$, $4$, and $5$.

\end{example}

Denton \cite[Theorem 1.1]{denton} proved the following theorem.

\begin{theorem} \label{thm:split}
 Suppose $\lambda$ splits into $\lambda^1,\ldots,\lambda^I$. Then $s^{(k)}_\lambda = \prod_{i = 1}^I s^{(k)}_{\lambda^i}$. \qed
\end{theorem}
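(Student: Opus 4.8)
### Proof proposal for Theorem \ref{thm:split}

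The plan is to reduce to the case $I=2$ by induction on $I$, and then to prove the two-component case $s^{(k)}_\lambda = s^{(k)}_{\lambda^1} s^{(k)}_{\lambda^2}$ by induction on $|\lambda^2|$, peeling off one part at a time and using the $k$-Pieri rule together with the residue-table description of weak horizontal strips from Theorem \ref{whs} and Corollary \ref{pieri}. The key structural fact to exploit is that if $\lambda$ splits at $J_1,\ldots,J_I$, then the residue and quotient tables of $\lambda$ are "block-diagonal" along the diagonal: by the recursive definition of $R$ and $Q$, once we pass a splitting index $J_i$ (where $r_{\bullet,J_i}=0$ in the relevant rows, since $\partial_k$ of the lower component has closed off), the computation of later columns restarts, so the submatrix of $R(\lambda)$ with rows and columns in $(J_{i-1},J_i]$ equals $R(\lambda^i)$ (suitably reindexed), and similarly for $Q$. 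I would first establish this block-decomposition as a lemma, directly from the definitions in Section \ref{residue} and the geometric picture in Proposition \ref{meaning}: the non-zero entries of column $J_i$ of $R(\lambda)$ are the parts of $\lambda_{(J_i)}^{(k)}$ that are $\le k-J_i$, and because the component closes off, adding cells in columns $>J_i$ does not interact with rows associated to columns $\le J_i$.

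With the block-decomposition in hand, the two-component induction runs as follows. Write $\lambda^2 = (\mu_1,\ldots,\mu_r)$ with all parts in the range $(J_1,J_2]$, and let $\tilde\lambda = \langle$ all parts of $\lambda^1$ together with $\mu_2,\ldots,\mu_r\rangle$. Then $\lambda = \tilde\lambda^{\{\mu_1\}}$ in a suitable sense (adding one part of size $\mu_1$), and more precisely $s^{(k)}_{\tilde\lambda} h_{\mu_1}$ is a sum of $s^{(k)}_{\nu}$ over weak horizontal strips. Using Theorem \ref{whs} and the fact that $\tilde\lambda$ also splits (its component containing column $\mu_1$'s range is obtained from $\lambda^2$ by removing a part), I would argue that in the expansion $s^{(k)}_{\tilde\lambda^1}s^{(k)}_{\tilde\lambda^2} h_{\mu_1}$ the Pieri terms that cross the splitting boundary $J_1$ are exactly cancelled or absent because $r_{i,j-1}=0$ for rows $i$ on the $\lambda^1$-side and columns $j$ on the $\lambda^2$-side (the residue-table block-diagonality), so the horizontal strip must stay within one block. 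This forces $s^{(k)}_{\tilde\lambda^1}s^{(k)}_{\tilde\lambda^2}h_{\mu_1} = s^{(k)}_{\tilde\lambda^1}\,(s^{(k)}_{\tilde\lambda^2}h_{\mu_1})$, and by the inductive hypothesis on $|\lambda^2|$, $s^{(k)}_{\tilde\lambda^1}s^{(k)}_{\tilde\lambda^2} = s^{(k)}_{\tilde\lambda}$; combining and using $s^{(k)}_{\tilde\lambda^2}h_{\mu_1} = s^{(k)}_{\lambda^2} + (\text{other terms, which must also split the same way})$ one isolates $s^{(k)}_{\lambda^1}s^{(k)}_{\lambda^2}$ by matching leading terms in dominance order, exactly as in the cancellation argument in the proof of the $k$-rectangle theorem in Section \ref{application}.

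Alternatively — and this may be cleaner — I would try a direct bijective/generating-function argument: by Corollary \ref{pieri} iterated, any $k$-Schur function expands, upon multiplication by a product of $h_n$'s, into sums governed entirely by the residue table, and a $k$-weak semistandard tableau of shape $\lambda$ decomposes canonically into a pair of $k$-weak semistandard tableaux of shapes $\lambda^1$ and $\lambda^2$ precisely because every weak horizontal strip inside $\lambda$ respects the splitting (again from Theorem \ref{whs}: a strip using a column $j$ in the $\lambda^2$-range and a column $i$ in the $\lambda^1$-range would require $r_{i,j-1}>0$, contradicting block-diagonality). This would give $K^{(k)}_{\lambda\mu} = \sum_{\mu^1\cup\mu^2 = \mu} K^{(k)}_{\lambda^1\mu^1}K^{(k)}_{\lambda^2\mu^2}$, which upon inverting the Kostka-type matrices yields the product formula for $k$-Schur functions.

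The main obstacle, in either route, is making precise and proving the residue-table block-decomposition lemma — that splitting at $J_1,\ldots,J_I$ forces $R(\lambda)$ and $Q(\lambda)$ to be block-diagonal with blocks $R(\lambda^i)$, $Q(\lambda^i)$ — and, relatedly, verifying that \emph{every} $k$-weak horizontal strip (not just covers) respects the block structure, including the subtlety that a strip could in principle "tunnel" through several columns; ruling this out requires the full force of Lemma \ref{lemma}(b), that $r_{i,j-1}=0$ with $i\notin S$, $j\in S$ obstructs the $k$-conjugate containment. Once that is nailed down, the inductive bookkeeping is routine and parallels Section \ref{application}.
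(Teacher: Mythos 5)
The paper does not prove Theorem~\ref{thm:split}: it cites Denton \cite[Theorem 1.1]{denton} and, immediately after Proposition~\ref{prop:split}, explicitly poses as an \emph{open problem} to find a simpler proof via residue tables and the block structure described there. So your proposal is not competing with a paper proof; it is an attempt at the stated open problem. That is worth flagging, because the gaps you acknowledge at the end of your sketch are precisely the reason the paper leaves this open rather than giving the short residue-table argument.

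Beyond the missing bookkeeping, there is a concrete error in the central step. Proposition~\ref{prop:split} says $\lambda$ splits at $J$ iff every $r_{iJ}\in\{0,k-J\}$. Your claim that a weak horizontal strip cannot ``cross'' the boundary relies on reading Theorem~\ref{whs} as forbidding $i\le J<j$ with both $i,j\in S$; in fact Theorem~\ref{whs} imposes conditions only for $i\notin S$, $j\in S$, and those conditions are of the form $r_{i,j-1}>0$, which the value $r_{iJ}=k-J$ \emph{satisfies} rather than contradicts (for $J<k$). So block-diagonality of $R(\lambda)$ does not prevent a weak strip from adding cells on both sides of $J$, and the proposed decomposition of a $k$-weak SSYT of shape $\lambda$ into independent tableaux on $\lambda^1$ and $\lambda^2$ does not follow. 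A further, separate obstruction to the Kostka-factorization route is that the intermediate shapes $\lambda^{(1)}\subsetneq\cdots\subsetneq\lambda^{(m-1)}$ in a weak chain to $\lambda$ need not themselves split at $J$, so one cannot appeal to block-diagonality of \emph{their} residue tables at all. Either route would need a genuinely new idea to handle these points; as it stands, the proposal is a plausible program rather than a proof, and the paper agrees by classifying this as open.
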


The following is easy to prove.

\begin{proposition}\label{prop:split}
 A $k$-bounded partition $\lambda$ splits at $J$ if and only if all entries of column $J$ of the residue table of $\lambda$ contains are equal to either $0$ or $k-J$. \qed
\end{proposition}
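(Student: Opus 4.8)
The plan is to connect the combinatorial definition of splitting at $J$ with the recursive structure of the residue table established in Proposition \ref{meaning}. Recall that $\lambda$ splits at $J$ means that in $\partial_k(\lambda) = \partial_k(\mf c^{(k)}(\lambda))$, the cells corresponding to parts of size $\leq J$ form a union of connected components that is disjoint (as a horizontal translate) from the cells corresponding to parts of size $> J$; equivalently, when we build $\mf c^{(k)}(\lambda)$ by adding rows from the bottom, the rows of length $\leq J$ and the rows of length $> J$ do not share any column among their low-hook-length cells. The key observation is that, by Proposition \ref{meaning}, column $J$ of the residue table records exactly the parts of $\lambda_{(J)}^{(k)}$ that are at most $k-J$ (the nonzero entries), while the sums of columns $1,\ldots,J$ of the quotient table record the parts of $\lambda_{(J)}^{(k)}$ equal to $k+1-1,\ldots,k+1-J$ respectively. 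Thus column $J$ of $R$ together with columns $1,\ldots,J$ of $Q$ completely determine the multiset of column-lengths of $\partial_k(\mf c^{(k)}(\lambda_{(J)}))$, i.e. of the ``bottom portion'' $\partial_k(\lambda)$ contributed by parts $\leq J$.

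First I would make precise the geometric picture: $\lambda$ splits at $J$ iff no column of $\mf c^{(k)}(\lambda)$ simultaneously contains a low-hook-length cell in a row of length $\leq J$ and a low-hook-length cell in a row of length $>J$. Since rows are added from the bottom and longer rows sit above shorter ones, and since (as shown in the proof of Proposition \ref{meaning}) the low-hook-length cells of each $\lambda_{(j)}^{(k)}$ of length $\leq k+1-j$ are top-justified, this ``non-overlap'' condition is equivalent to: every column of $\partial_k(\mf c^{(k)}(\lambda_{(J)}))$ has length either $0$ or exactly $k-J$. Indeed, a column that currently has length strictly between $1$ and $k-J$ would, upon adding a row of length $>J$ above, receive an extra cell of low hook-length and hence connect the two portions; a column of length $0$ stays empty when rows of length $>J$ are added only if... — here one must check carefully that adding a row of length $j' > J$ affects precisely the columns that already have positive length or length $\geq k-j'$, which follows from the mechanics of $\mf c^{(k)}$ used throughout Section \ref{s:kconj}. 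Conversely a full column of length exactly $k-J$ becomes a column of length $k+1-j'$ upon adding a row of length $j' \le J$... I would instead phrase the clean equivalence directly: splitting at $J$ $\iff$ $\lambda_{(J)}^{(k)}$ has all parts equal to $k-J$ (the ``empty column'' case being absorbed since those columns never again receive a cell from rows $\leq J$), together with the observation that such columns do not interfere with later rows.

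Then the translation to the residue table is immediate from Proposition \ref{meaning}: the nonzero entries of column $J$ of $R$ are exactly the parts of $\lambda_{(J)}^{(k)}$ that are $\leq k-J$, so "all parts of $\lambda_{(J)}^{(k)}$ that are $\leq k-J$ equal $k-J$" is precisely "every entry of column $J$ of $R$ is $0$ or $k-J$." Conversely, if column $J$ of $R$ has an entry strictly between $0$ and $k-J$, that entry is a part of $\lambda_{(J)}^{(k)}$ of length in $\{1,\ldots,k-J-1\}$, giving a column of $\partial_k$ that links the $\leq J$ rows to the $>J$ rows, so $\lambda$ does not split at $J$.

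The main obstacle I anticipate is the careful bookkeeping in the geometric equivalence, specifically verifying that adding rows of length $>J$ on top of a configuration whose columns all have length $0$ or $k-J$ never merges a length-$(k-J)$ column of the bottom portion with the newly added cells in a way that would (spuriously) count as non-splitting, and dually that a column of length $0$ in $\partial_k(\mf c^{(k)}(\lambda_{(J)}))$ genuinely corresponds to a column of $\mf c^{(k)}(\lambda)$ with no low-hook-length cells below row-length $J$; both points rely on the precise statement that when a row of length $j'$ is added in the construction of $\mf c^{(k)}$, the columns of length $\geq k+1-j'$ are unchanged, those of length $k-j'$ grow to length $k+1-j'$, and those of length $<k-j'$ grow by one — exactly the mechanism already invoked in the proof of Proposition \ref{meaning}. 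Once that lemma is isolated, the proposition follows in a line. Given the paper labels this "easy to prove," I expect the intended argument is essentially this chain of equivalences, and the write-up can be kept short by citing the relevant parts of the proof of Proposition \ref{meaning}.
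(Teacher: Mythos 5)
Your strategy — translate the residue-table condition via Proposition \ref{meaning} into a statement about the column lengths of $\partial_k(\mf c^{(k)}(\lambda_{(J)}))$, then argue geometrically about what the rows of $\lambda$ of length $>J$ do — is the natural one here (the paper itself offers no proof, just $\qed$). But the ``only if'' direction as you argue it has a genuine gap. You assert that a column of $\partial_k(\mf c^{(k)}(\lambda_{(J)}))$ of length $l$ with $0<l<k-J$ must become linked to a cell coming from a row of length $>J$. That is only guaranteed when the smallest part $j'$ of $\lambda$ exceeding $J$ satisfies $l\leq k-j'$; only then does the row of length $j'$ actually deposit a cell on that column, since columns of $\partial_k$-length $\geq k+1-j'$ are left unchanged by the mechanics you cite. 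When $m_{J+1}=0$ this can fail. For instance take $k=5$, $\lambda=411$, $J=1$: then $r_{11}=2$ is strictly between $0$ and $k-J=4$, yet the single $\partial_5$-column of $\mf c^{(5)}(11)=11$ has length $2=k+1-4$, so the row of length $4$ is placed entirely to its right and $\partial_5(\mf c^{(5)}(411))$ has two components separated exactly between the $1$'s and the $4$ — no link occurs, contrary to your claim. The statement can be rescued, but only under the reading where ``splits at $J$'' is taken in the maximal sense (implicitly $m_{J+1}>0$ or $J\geq\lambda_1$), a convention the paper leaves unstated; your proof should either invoke that convention explicitly, or handle the case $m_{J+1}=0$ by passing to the largest $J'$ with $\lambda_{(J')}=\lambda_{(J)}$ and comparing the residue columns along the way.

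A smaller slip: the intermediate formulation ``every column of $\partial_k(\mf c^{(k)}(\lambda_{(J)}))$ has length either $0$ or exactly $k-J$'' is wrong as stated — full columns of length $\geq k+1-J$ are allowed (and generic). Your later phrasing, that every part of $\lambda_{(J)}^{(k)}$ which is $\leq k-J$ equals $k-J$, is the correct one and is what Proposition \ref{meaning} delivers directly; the write-up should use that version throughout.
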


The consequence of $r_{i,J} \in \set{0,k-J}$ for $i = 1,\ldots,J$ is that $r_{i,J+1} = r_{J+1,J+1}$, $r_{i,J+2} = r_{J+2,J+2}$, \textit{etc.} In other words, the residue table of a partition that splits at $J$ is composed of two essentially independent parts, the triangles $(r_{ij})_{1 \leq i \leq j \leq J-1}$ and $(r_{ij})_{J \leq i \leq j \leq k}$; the rectangle $(r_{ij})_{1 \leq i \leq J-1, J \leq j \leq k}$ is composed of $J-1$ copies of the line $(r_{iJ})_{J \leq i \leq k}$. So Denton's theorem states that when the residue table is composed of such independent parts (each a residue table itself!), the corresponding $k$-Schur function is the product of $k$-Schur functions of the parts.

\begin{problem}
 Find a simpler proof of Theorem \ref{thm:split}, using Proposition \ref{prop:split} and the preceding paragraph.
\end{problem}

\subsection{LLMS insertion} \label{llms}

In \cite{LLMS10}, a variant of the Robinson-Schen\-sted insertion for strong marked and weak tableaux is presented. More specifically, starting with a square integer matrix $M$ of size $m \times m$, they construct a \emph{growth diagram} of $M$, an $(m+1) \times (m+1)$ grid that has the empty partition in every vertex in the top row and left-most column, the integer $m_{ij}$ in the square between vertices $(i,j)$, $(i,j+1)$, $(i+1,j)$ and $(i+1,j+1)$, a strong marked horizontal strip (we omit the definition) on horizontal edges, and a weak horizontal strip on vertical edges; like in the classical case, the new weak and strong strips are constructed using certain local rules, but they are extremely complicated. The procedure has very important implications and it would be important to understand it better.

\medskip

The local rules simplify slightly when specialized to standard tableaux \cite[\S 10.4]{LLMS10} (\textit{i.e.}, when $M$ is a $0/1$ matrix). The following description of case X (external insertion) hints that a description in terms of residue tables could be possible. 

\begin{conjecture}
 Suppose that all three known corners of a square in the growth diagram are the same, say $\lambda$, and the number within the square is $1$. Then the unknown corner of the square is $\lambda^{\set i}$, where $i$, $1 \leq i \leq k$, is the unique index for which $r_{j,i-1} > 0$ for $1 \leq j \leq i-1$ and $r_{ij} < k - j$ for $i \leq j \leq k-1$.
\end{conjecture}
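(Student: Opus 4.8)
The plan is to identify the index $i$ explicitly via the weak Pieri structure and then show the growth-diagram local rule in case X produces exactly $\lambda^{\set i}$. First I would analyze the claimed characterization of $i$: the conditions $r_{j,i-1} > 0$ for $1 \leq j \leq i-1$ say precisely (by Theorem \ref{whs}) that $\lambda^{\set i}$ covers $\lambda$ in the weak order, while the conditions $r_{ij} < k-j$ for $i \leq j \leq k-1$ should be shown to be equivalent to $\lambda^{\set i}$ being the \emph{smallest} such weak cover in a suitable sense (for instance, minimal in dominance among the weak covers of $\lambda$, or the unique one not obtained by ``pushing further right''). So the first key step is the combinatorial lemma: for every $k$-bounded partition $\lambda$ there is exactly one index $i$ satisfying both families of inequalities, and $\lambda^{\set i}$ is characterized intrinsically (independently of residue tables) as the minimal weak cover of $\lambda$.

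\textbf{Key steps.} Second, I would recall the case X local rule from \cite[\S 10.4]{LLMS10}: when the three known corners all equal $\lambda$ and the square entry is $1$, the unknown (bottom-right) corner $\mu$ is determined by taking the weak horizontal strip $\mu/\lambda$ of size $1$ that is, in their terminology, the ``minimal'' or ``first available'' weak cover — the local rules for standard tableaux reduce external insertion to choosing the leftmost legal addition. Third, I would translate that notion of ``leftmost legal weak addition'' into the residue-table language. Adding a cell in column $i$ is legal as a weak strip exactly when $r_{j,i-1} > 0$ for all $j < i$ (Theorem \ref{whs}); the leftmost such $i$ that is also consistent with the affine/core combinatorics governing the growth diagram is cut out by requiring that for all columns $j \geq i$ the residue $r_{ij}$ has not yet reached its maximum $k-j$ — this is what prevents the insertion from ``cascading'' past column $i$. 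Matching these two descriptions of the same cell gives the conjecture.

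\textbf{Main obstacle.} The hard part will be the second step: making precise the sense in which the LLMS case-X rule selects the residue-table index described, since the published local rules are stated in terms of strong marked and weak tableaux on cores, not residue tables, and even in the standard-tableau specialization they involve a delicate case analysis on the shapes at the four corners. Concretely, one must show that the bottom-right corner $\mu$ produced by their rule satisfies simultaneously (i) $\mu/\lambda$ is a weak horizontal strip of size $1$, and (ii) $\mu$ is the unique such $\mu$ for which the corresponding strong marked strip condition on the \emph{horizontal} edges of the growth diagram can be met — and then to see that (ii) is equivalent to $r_{ij} < k-j$ for $i \leq j \leq k-1$. I expect this equivalence to follow from Proposition \ref{meaning} (the residue table encodes the core $\mf c^{(k)}(\lambda)$ column by column), together with Theorem \ref{strong} describing which strong covers are available, but verifying that the two notions of ``minimal'' coincide — rather than merely being comparable — will require carefully unwinding the LLMS local rules. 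Uniqueness of $i$ (exactly one index works, so $\mu$ is well defined) should be the easiest part and can be proved directly from the recursive definition of the residue table.
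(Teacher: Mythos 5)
The statement you are addressing is labeled a Conjecture in the paper, and the paper does \emph{not} prove it: the only thing established there is the auxiliary proposition that the index $i$ is unique (so that $\lambda^{\set i}$ is well defined), and the equivalence with the LLMS case-X local rule is left open as a stated problem. So there is no proof of record to compare against, and your proposal is likewise not a proof but a roadmap whose crucial second step you yourself flag as unfinished. That is honest, but two of the unproved working assumptions in the roadmap are in fact false, which would derail the plan if carried out as written.

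First, you suggest that the extra conditions $r_{ij} < k-j$ for $i \leq j \leq k-1$ cut out the ``smallest'' weak cover, e.g.\ minimal in dominance, or the leftmost legal addition. This cannot be right: for any $\lambda$, $i=1$ always gives a weak cover (the conditions $r_{j,i-1}>0$ for $j<i$ are vacuous), so the leftmost and dominance-smallest weak cover of $\lambda$ is always $\lambda\cup 1$, which would make the conjecture trivial and wrong. Concretely, for $k=2$ and $\lambda=1$, both $1^{\set 1}=11$ and $1^{\set 2}=2$ are weak covers and $11$ is strictly below $2$ in dominance, yet the conjecture (and the LLMS example the paper cites) produce $2$, because $r_{11}=1=k-1$ rules out $i=1$. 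The same example falsifies the claim that the LLMS case-X rule for standard tableaux is ``choose the leftmost legal addition.'' What does follow from the paper's own machinery (part (a) of the Lemma describing how the quotient table changes under a weak strip, with $S=\set i$) is that $r_{ij}<k-j$ for $i\leq j\leq k-1$ is exactly the condition $h(i)=k$, i.e.\ that adding the cell in column $i$ of $\lambda$ adds a cell in column $1$ of $\lambda^{(k)}$. Replacing your ``minimal weak cover'' heuristic by this precise characterization --- the unique $i$ that is both a weak addition on the $\lambda$ side and a column-$1$ addition on the $k$-conjugate side --- is a more promising intermediate step, but the genuine gap remains: showing that this is what the LLMS local rule selects is precisely the content of the conjecture, and neither the paper nor your outline closes it.
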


\begin{example}
 For $k = 2$ and $\lambda = \emptyset$ (resp., $\lambda = 1$, $\lambda = 11$, $\lambda = 111$), the residue table is $\begin{smallmatrix}0 & 0 \\  & 0\end{smallmatrix}$ (resp., $\begin{smallmatrix}1 & 1 \\  & 0\end{smallmatrix}$, $\begin{smallmatrix}0 & 0 \\  & 0\end{smallmatrix}$, $\begin{smallmatrix}1 & 1 \\  & 0\end{smallmatrix}$), and the unique $i$ satisfying the condition from the conjecture is $1$ (resp., $2$, $1$, $2$). That means that the bottom right corner of the square of the growth diagram is $\emptyset^{\set 1} = 1$ (resp., $1^{\set 2} = 2$, $11^{\set 1} = 111$, $111^{\set 2} = 211$), as confirmed by the example in \cite[\S 10.4]{LLMS10}.
\end{example}

The following justifies the term ``unique'' in the conjecture.

\begin{proposition}
 For a $k$-bounded partition $\lambda$, there exists exactly one $i$, $1 \leq i \leq k$, for which $r_{j,i-1} > 0$ for $1 \leq j \leq i-1$ and $r_{ij} < k - j$ for $i \leq j \leq k-1$.
\end{proposition}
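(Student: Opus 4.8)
The plan is to reduce the statement to a one-dimensional claim about a single walk determined by the first column data, then argue both existence and uniqueness simultaneously by tracking a monotone quantity. For a candidate index $i$, the two conditions split naturally: the ``left'' condition $r_{j,i-1}>0$ for $1\le j\le i-1$ says that in each of rows $1,\dots,i-1$ the residue entry in column $i-1$ is nonzero, and the ``right'' condition $r_{ij}<k-j$ for $i\le j\le k-1$ is a condition purely on row $i$. First I would observe, using the recursion $r_{ij}=(m_j(\lambda)+r_{i,j-1})\bmod(k+1-j)$ together with Proposition \ref{meaning}, that the nonvanishing of $r_{j,i-1}$ for all $j<i$ is equivalent to saying that the partition $\lambda_{(i-1)}^{(k)}$ has no part equal to $k+1-i$, i.e.\ that when we build the core $\mf c^{(k)}(\lambda_{(i-1)})$ there is no column of length exactly $k+1-i$; and the condition $r_{ij}<k-j$ for all $j\ge i$ says that in row $i$ the residue never attains its maximal value $k-j$, which by the same bijection means $\lambda_{(k)}^{(k)}=\lambda^{(k)}$, restricted to parts $\le k-i$, ``continues'' the pattern of $\lambda_{(i-1)}^{(k)}$ without ever hitting the boundary.

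The cleaner route, which I expect to be the one actually used, is to phrase this in terms of the splitting sets of Subsection \ref{split}. By Proposition \ref{prop:split}, $\lambda$ splits at $J$ exactly when every entry of column $J$ of $R$ lies in $\{0,k-J\}$; and the discussion after that proposition shows that the columns of $R$ strictly between two consecutive splitting points are ``generic'' in a controlled way. I would let $J$ be the \emph{largest} index such that $\lambda$ splits at $J$ with column $J$ identically zero — equivalently, the start of the last component of $\lambda$ — and then set $i=J+1$ (with the convention that $i=1$ if $\lambda$ has no such split, i.e.\ if $\lambda=\emptyset$ forces $i=1$, or more generally if the relevant column is never all-zero). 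The left condition $r_{j,i-1}>0$ for $j<i$ then amounts to: column $i-1=J$ of $R$ has all entries equal to $0$, which is exactly our choice. The right condition $r_{ij}<k-j$ for $j\ge i$ amounts to: starting from the all-zero column $J$, the row-$i$ walk $r_{i,J},r_{i,J+1},\dots$ never reaches the top, which is equivalent to saying the \emph{last} component $\langle (J+1)^{m_{J+1}(\lambda)},\dots,k^{m_k(\lambda)}\rangle$ never again splits (with an all-max or all-zero column). So existence reduces to: among the indices $i$ with column $i-1$ all-zero, the one with $i-1$ maximal automatically satisfies the right condition, because any later all-max/all-zero column would contradict maximality once one checks an all-max column of $R$ also corresponds to a split.

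For uniqueness I would show directly that the two conditions cannot both hold for two distinct indices $i<i'$. If both hold for $i$, then in particular $r_{i,i'-2}<k-(i'-2)$, so $r_{i,i'-2}$ is not maximal; on the other hand the left condition at $i'$ forces $r_{i,i'-1}>0$, and combined with $r_{i,i'-2}<k-(i'-2)$ and the recursion one derives a contradiction with the right condition at $i$ at column $i'-1$ — more precisely, having column $i'-1$ entirely nonzero while row $i$ stays strictly below the maximum in all columns $\ge i$ forces, via $r_{i,i'-1}=(m_{i'-1}(\lambda)+r_{i,i'-2})\bmod(k+2-i')$, an inconsistency once one also uses that the left condition at $i$ is vacuous. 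The cleanest packaging is: the set of valid $i$ is in bijection with the choices of ``where the last component begins,'' and there is exactly one such place. The main obstacle I anticipate is the bookkeeping in translating between the three equivalent languages (residue-table entries, the core-building process of Proposition \ref{meaning}, and splitting points) and nailing down the edge cases $i=1$ (always valid when no earlier column is all-zero, in particular for $\lambda=\emptyset$) and $i=k$; once the dictionary is set up, both existence and uniqueness should follow from the observation that the all-zero columns of $R$ are totally ordered and we simply take the last one (shifted by one), with the right-hand condition being automatic for that choice and violated for every other.
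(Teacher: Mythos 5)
The proposal does not work as written, and the central issue is a sign error in translating the left-hand condition. The condition $r_{j,i-1}>0$ for $1\le j\le i-1$ says that every entry of column $i-1$ of the residue table (in rows $1,\dots,i-1$) is \emph{strictly positive}, i.e.\ \emph{nonzero}. You write that this ``amounts to: column $i-1=J$ of $R$ has all entries equal to $0$,'' which is the exact opposite. So the link you draw to splitting via Proposition \ref{prop:split} breaks immediately: if you chose $J$ with column $J$ identically zero and set $i=J+1$, the left condition $r_{j,J}>0$ would in fact \emph{fail} for every $j\le J$. This undermines the whole ``pick the start of the last component'' strategy, and I do not see a way to repair it by flipping $i$ by one or swapping $0\leftrightarrow k-J$, since the left condition forbids both extremes, not just one.

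The uniqueness sketch also stops short of an actual contradiction. You correctly derive $0<r_{i,i'-1}<k+1-i'$ from the two sets of conditions with $i<i'$, but these inequalities are perfectly consistent, so ``one derives a contradiction \dots via $r_{i,i'-1}=(m_{i'-1}+r_{i,i'-2})\bmod(k+2-i')$'' is not a proof. The mechanism the paper uses, and which your sketch is missing, is to compare rows $i$ and $i'$ all the way out to column $k$: the bound $0<r_{i,i'-1}<k+1-i'$ forces $r_{i,i'}\neq r_{i',i'}$, yet $r_{i,k}=r_{i',k}=0$, so there is a last column $j\in[i',k-1]$ where the two rows disagree and the recursion at $j+1$ forces $r_{i,j}\equiv r_{i',j}\pmod{k-j}$ with $r_{i,j}\neq r_{i',j}$; this can only happen if one of them equals $k-j$, contradicting the right-hand condition for both $i$ and $i'$. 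For existence, the paper takes the \emph{minimal} $i'$ satisfying the right-hand condition and shows by a chain of ``rows coincide past a $0$ or a maximal entry'' arguments that the left-hand condition is automatic; your splitting-based existence claim neither reproduces this nor supplies a substitute for it. In short, the approach differs from the paper's direct residue-table argument, but as proposed it contains a false translation and an unfinished uniqueness step, so it is not a correct proof.
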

\begin{proof}
 Let $i'$ be the minimal $i$ for which $r_{ij} < k - j$ for $i \leq j \leq k-1$ ($i = k$ satisfies this condition, so $i'$ is well defined). We prove that $r_{j,i'-1} > 0$ for $1 \leq j \leq i'-1$ by contradiction. Assume that $r_{j,i'-1} = 0$ for some $j$, $1 \leq j \leq i' - 1$, let $j'$ be the largest such $j$. Since $r_{j',i'-1} = 0$, row $j'$ from position $i'$ onwards is the same as row $i'$; therefore $r_{j'j} < k - j$ for $i' \leq j \leq k-1$. By minimality of $i'$, there must be $j$, $j' \leq j < i'-1$, for which $r_{j'j} = k - j$, let $j''$ be the largest such $j$. Since $r_{j'j''} = k-j''$, row $j'$ from position $j''+1$ onwards is the same as row $j''+1$. That implies that $r_{j'' + 1,j} < k - j$ for $j''+ 1 \leq j \leq k - 1$, and since $j'' + 1 < i'$, this contradicts the minimality of $i'$.\\
 Now assume that both $i'$ and $i''$, $i' < i''$, satisfy the conditions. Since $r_{i',i''-1} > 0$ (by the conditions for $i''$) and $r'_{i',i''-1} < k + 1- i''$ (by the conditions for $i'$), we have $r_{i'i''} \neq r_{i''i''}$. But $r_{i'k} = r_{i''k}$ ($ = 0$), so there must be $j$, $i'' \leq j < k$, so that $r_{i'j} \neq r_{i''j}$ and $r_{i',j+1} = r_{i'',j+1}$. Since $r_{i',j+1} = (r_{i'j} + m_{j+1}(\lambda)) \md (k-j) = r_{i'',j+1} = (r_{i''j} + m_{j+1}(\lambda)) \md (k-j)$, we have $r_{i'j} \md (k-j) = r_{i''j} \md (k-j)$, which can only be true if one of $r_{i'j}$ and $r_{i''j}$ is $0$ and the other one is $k-j$, which contradicts $r_{i'j} < k - j$ and $r_{i''j} < k - j$.
\end{proof}

\begin{problem}
 Describe cases A and B of LLMS insertion in the standard case in terms of residue tables. Describe cases A, B, C and X of the general LLMS insertion in terms of residue tables.
\end{problem}

\section{On multiplication of arbitrary $k$-Schur functions} \label{kLR}

\subsection{Classical Littlewood-Richardson rule} \label{lr}

A description of the coefficients in the expansion of a product of two Schur functions in terms of Schur functions, the \emph{Littlewood-Richardson rule}, is one of the major results of classical symmetric function theory. The theorem has many versions; we will need (and later slightly adapt) the following.

\medskip

Recall that we call the coefficients $c_{\lambda\mu}^\nu$ in the expansion $s_\lambda s_\mu = \sum_\nu c_{\lambda\mu}^\nu s_\nu$ \emph{Littlewood-Richard\-son coefficients}. For a (skew) semistandard Young tableaux $T$ (a map from the cells of the diagram of $\lambda$ to $\set{1,2,\ldots}$), take the numbers in $T$ from top to bottom, right to left; the resulting word is called the \emph{reverse reading word of $T$}. A word $a = a_1a_2\ldots a_m$ with $a_i \in \N$ is a \emph{lattice permutation} if the number of $i$'s in $a_1a_2\ldots a_j$ is greater than or equal to the number of $(i+1)$'s for all $i$ and $j$.

\begin{theorem}{\cite[Theorem A1.3.3]{stanley}}
 For partitions $\lambda,\mu,\nu$, the coefficient $c_{\lambda\mu}^\nu$ is equal to the number of semistandard Young tableaux of shape $\nu/\lambda$ and weight $\mu$ whose reverse reading word is a lattice permutation. \qed
\end{theorem}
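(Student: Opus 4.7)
The strategy is to establish the claim through the plactic monoid and jeu de taquin. By the adjointness of multiplication by $s_\lambda$ and the skewing operator under the Hall inner product (for which $\{s_\nu\}$ is orthonormal), one has
$$c_{\lambda\mu}^\nu \;=\; \langle s_\lambda s_\mu,\, s_\nu \rangle \;=\; \langle s_\mu,\, s_{\nu/\lambda}\rangle,$$
so $c_{\lambda\mu}^\nu$ is precisely the coefficient of $s_\mu$ in the skew Schur function, which expands monomially as $s_{\nu/\lambda} = \sum_T x^{\mathrm{wt}(T)}$ over skew SSYT $T$ of shape $\nu/\lambda$. The task reduces to identifying a combinatorially natural subset of such $T$ of weight $\mu$ whose cardinality equals this coefficient.

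The natural device is jeu de taquin, which sends each skew SSYT to a unique straight-shape SSYT (its rectification) while preserving weight. First I would show that grouping skew SSYT of shape $\nu/\lambda$ by the shape of their rectification recovers the Schur expansion: the coefficient of $s_\mu$ in $s_{\nu/\lambda}$ equals the number of skew SSYT of shape $\nu/\lambda$ rectifying to any one fixed SSYT of shape $\mu$. Taking the fixed tableau to be the row-superstandard tableau of shape $\mu$ (row $i$ filled entirely with $i$'s, which is the unique SSYT of shape $\mu$ and weight $\mu$), the fiber consists of all skew SSYT whose reverse reading word is Knuth-equivalent to the word $1^{\mu_1}2^{\mu_2}\cdots\ell^{\mu_\ell}$.

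The main obstacle, and the technical heart of the argument, is identifying this Knuth-equivalence class with the set of skew SSYT whose reverse reading word is a lattice permutation. This rests on two classical ingredients. First, Knuth's theorem: two words are Knuth-equivalent iff they have the same insertion tableau under the RSK correspondence. Second, the characterization of Yamanouchi (lattice permutation) words of content $\mu$ as precisely those Knuth-equivalent to $1^{\mu_1}2^{\mu_2}\cdots\ell^{\mu_\ell}$; the forward direction follows by induction on length (an elementary Knuth move does not disturb the lattice property), and the reverse direction follows because the row-superstandard word is itself a lattice permutation and every Knuth class contains at most one lattice permutation of its content (the latter proved by comparing suffixes letter by letter). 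Once these two facts are in place, combining with the rectification-fiber description completes the proof. Each ingredient is individually nontrivial, but jeu de taquin invariance of the Schur expansion and the Yamanouchi-versus-row-superstandard bijection are the two points at which genuine combinatorial input is required.
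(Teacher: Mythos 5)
The paper offers no proof of this statement: it is the classical Littlewood--Richardson rule quoted directly from Stanley (\cite[Thm.~A1.3.3]{stanley}) and stated without argument, so there is nothing internal to compare against. Your overall strategy --- reduce by adjointness to the coefficient of $s_\mu$ in $s_{\nu/\lambda}$, group skew tableaux by rectification, and then choose the row-superstandard tableau as the distinguished fiber --- is the standard plactic/jeu-de-taquin proof, and both the adjointness reduction and the rectification-fiber theorem are correctly stated. The gap is in the step you yourself flag as the technical heart.

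All three of the Knuth-theoretic assertions you rely on there are false. The word $1^{\mu_1}2^{\mu_2}\cdots\ell^{\mu_\ell}$ is the unique weakly increasing word of content $\mu$, so its insertion tableau is a single row of length $|\mu|$ and its Knuth class is the singleton $\{1^{\mu_1}\cdots\ell^{\mu_\ell}\}$; thus ``lattice permutations of content $\mu$ are those Knuth-equivalent to $1^{\mu_1}\cdots\ell^{\mu_\ell}$'' would force there to be exactly one lattice permutation per content, whereas already $112$ and $121$ are both lattice permutations of content $(2,1)$. Knuth moves do not preserve the lattice property: the move $acb\equiv cab$ (with $a=b=1$, $c=2$) sends the lattice permutation $121$ to $211$, which is not one. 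And a Knuth class can hold several lattice permutations of the same content: $1121$ and $1211$ are both lattice permutations of content $(3,1)$ with the same insertion tableau. Your fiber description therefore breaks: the skew tableau of shape $(3,2)/(2)$ with entry $1$ in cell $(1,3)$ and entries $1,2$ in cells $(2,1),(2,2)$ rectifies to the row-superstandard tableau of shape $(2,1)$ and has lattice reverse reading word $121$, yet $121$ is not Knuth-equivalent to $112$ (their insertion tableaux have shapes $(2,1)$ and $(3)$). The correct invariant of rectification is the row reading word $w(T)$ (left to right, bottom to top), which is the \emph{reverse} of the paper's reverse reading word; the relevant Knuth class is that of $\ell^{\mu_\ell}\cdots 2^{\mu_2}1^{\mu_1}$, and membership in it is equivalent to $w(T)$ being a \emph{reverse} lattice word (every suffix has partition content), i.e.\ to $w(T)^{\mathrm{rev}}$ being a lattice permutation. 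That last equivalence is the lemma you actually need, and you cannot obtain it by reversing your claims, since reversal does not preserve Knuth equivalence when letters repeat (e.g.\ $221$ is Knuth-equivalent to $212$, but $122$ is not).
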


\begin{example}
 Take $\lambda = 32$, $\mu = 21$ and $\nu = 431$. The semistandard Young tableaux of shape $431/32$ and weight $21$ are shown in Figure \ref{fig4}.
 
\begin{figure}[!ht]
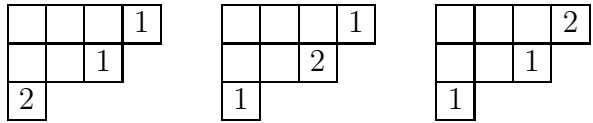
 
\begin{center}
 \ytableausetup{boxsize=1.2em}
$$\begin{ytableau}
  \phantom{1} & \phantom{1}& \phantom{1}& 1 \\ 
\phantom{1}& \phantom{1}& 1 \\ 
2
  \end{ytableau} \qquad 
\begin{ytableau}
  \phantom{1} & \phantom{1}& \phantom{1}& 1 \\ 
\phantom{1}& \phantom{1}& 2 \\ 
1
  \end{ytableau} \qquad 
\begin{ytableau}
  \phantom{1} & \phantom{1}& \phantom{1}& 2 \\ 
\phantom{1}& \phantom{1}& 1 \\ 
1
  \end{ytableau} \quad $$
\caption{The computation of $c_{32,21}^{431} = 2$.}\label{fig4}
\end{center}
\end{figure}

The reverse reading words are $112$, $121$ and $211$, respectively. Since only the first two are lattice permutations, we have $c_{32,21}^{431} = 2$.
\end{example}

There is another way to think about this, one that will prove crucial for our purposes (compare with Remark \ref{remark}). Define an \emph{array} $\p A = (A_1,A_2,\ldots)$ to be a sequence of (finite) sets of positive integers, with all but finitely many of them empty. The \emph{shape} of an array $\p A$ is the sequence $(|A_1|,|A_2|,\ldots)$. The \emph{weight} of an array $\p A$ is the composition $(m_1,m_2,\ldots)$, where $m_i$ is the total number of $i$'s in $A_1,A_2,\ldots$. For example, $\p A = (\set{2},\emptyset,\set{1},\set{1},\emptyset,\emptyset,\ldots)$ and $\p B = (\emptyset,\set{1,2},\set{1},\emptyset,\emptyset,\ldots)$ are arrays of shapes $1011$ and $021$ and weight $21$. We will usually present the sets $A_i$ as columns of increasing numbers and the empty sets as dots, and leave out the trailing empty sets; so $\p A = 2 \cdot 1 1$, $\p B = \begin{matrix}
\cdot & 1 & 1 \\
   & 2 & 
\end{matrix}$. We write $a_{ij}$ for the $j$-th largest integer in $A_i$, with $a_{ij} = \infty$ if $j > |A_i|$. 
If $\lambda \subseteq \nu$, call the sequence $(\nu'_i-\lambda'_i)_{i \geq 1}$ the \emph{shape} of $\nu/\lambda$. For example, the shape of $431/32$ is $1011$.

\medskip

We can identify every semistandard Young tableau of shape $\nu/\lambda$ and weight $\mu$ with the addition of an array of shape $\nu/\lambda$ and weight $\mu$ to $\lambda$. The previous example shows the addition of arrays $2 \cdot 1 1$, $1 \cdot 2 1$ and $1 \cdot 1 2$ to $32$. 

\medskip

On the other hand, we cannot add \emph{any} array to any shape. For example, if we add $2 \cdot 1 1$ to a partition $\lambda$ with $m_2(\lambda) = 0$, the $1$ in the third column will be to the right of an empty square, which is impossible. Indeed, it is easy to see that we can add $\p A = 2 \cdot 1 1$ if and only if $m_2(\lambda) \geq 1$, and we can add $\p B = \begin{matrix}
\cdot & 1 & 1 \\
   & 2 & 
\end{matrix}$ if and only if $m_1(\lambda) \geq 2$.

\medskip

Write $\lambda^{\p A}$ for the tableau obtained if $\p A$ is added to $\lambda$. We have the following lemma.

\begin{lemma} \label{lemma1}
 The tableau $\lambda^{\p A}/\lambda$ is semistandard if and only if $m_{i}(\lambda) \geq c_i$ for all $i$, where $c_i$ is the minimal among nonnegative integers $c$ for which $a_{i,j-c} \leq a_{i+1,j}$ for all $j$, $c < j \leq |A_{i+1}|$. \qed
\end{lemma}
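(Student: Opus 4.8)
The plan is to translate semistandardness of $\lambda^{\mathcal A}/\lambda$ into conditions on consecutive pairs of columns and then read the numbers $c_i$ off those conditions. First I would spell out the layout of $\lambda^{\mathcal A}$: by construction the cells of $\lambda^{\mathcal A}/\lambda$ in column $i$ occupy rows $\lambda'_i+1,\ldots,\lambda'_i+|A_i|$, the cell in row $\lambda'_i+j$ carrying the entry $a_{ij}$, so that $a_{i,1}<a_{i,2}<\cdots<a_{i,|A_i|}$, with $a_{ij}=\infty$ for $j>|A_i|$; and $m_i(\lambda)=\lambda'_i-\lambda'_{i+1}$ records how far column $i$ of $\lambda$ overhangs column $i+1$. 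Two remarks then cut the problem down. Column-strictness of $\lambda^{\mathcal A}/\lambda$ is automatic, since within column $i$ the entries $a_{i,1}<a_{i,2}<\cdots$ already strictly increase downward and nothing needs to be checked between a skew cell and a cell of $\lambda$. Hence $\lambda^{\mathcal A}/\lambda$ is semistandard if and only if (i) $\lambda^{\mathcal A}$ is a partition, i.e.\ $(\lambda'_i+|A_i|)_{i\ge1}$ is weakly decreasing (which in particular forces $\lambda\subseteq\lambda^{\mathcal A}$), and (ii) every row of $\lambda^{\mathcal A}/\lambda$ weakly increases.

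Next I would localize (i) and (ii) to adjacent columns. Fix $i$. The skew cell of column $i+1$ in row $\lambda'_{i+1}+j$ carries $a_{i+1,j}$, and its left neighbour is the cell $(\lambda'_{i+1}+j,\,i)$. This neighbour belongs to $\lambda$, and so imposes no constraint, exactly when $j\le m_i(\lambda)$; otherwise it is the skew cell of column $i$ carrying $a_{i,\,j-m_i(\lambda)}$, where $a_{i,\,j-m_i(\lambda)}=\infty$ precisely when $j-m_i(\lambda)>|A_i|$, i.e.\ precisely when that left neighbour is in fact missing and the shape fails to be a diagram. Consequently the conjunction of ``$\lambda^{\mathcal A}$ is a diagram in columns $i,i+1$'' and ``the rows of $\lambda^{\mathcal A}/\lambda$ weakly increase across columns $i,i+1$'' holds if and only if $a_{i,\,j-m_i(\lambda)}\le a_{i+1,j}$ for all $j$ with $m_i(\lambda)<j\le|A_{i+1}|$ --- that is, if and only if $c=m_i(\lambda)$ satisfies the condition defining $c_i$.

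It then remains to identify ``$c=m_i(\lambda)$ satisfies that condition'' with the inequality $m_i(\lambda)\ge c_i$. The condition is upward closed in $c$: if it holds for $c$ and $c'>c$, then for $c'<j\le|A_{i+1}|$ one has $1\le j-c'<j-c$, hence $a_{i,\,j-c'}\le a_{i,\,j-c}\le a_{i+1,j}$; since $c=|A_{i+1}|$ satisfies it vacuously, the admissible values of $c$ are exactly $c_i,c_i+1,\ldots$, and $m_i(\lambda)$ is admissible if and only if $m_i(\lambda)\ge c_i$. Taking the conjunction over all $i$ --- only finitely many of which are nontrivial, since $c_i=0$ whenever $A_i=A_{i+1}=\emptyset$ --- then yields the lemma.

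The step I expect to be the main obstacle is the one in the second paragraph: making precise that encoding a missing left neighbour by the value $\infty$ genuinely absorbs the shape-validity requirement into the inequalities for the $c_i$, so that ``$m_i(\lambda)\ge c_i$ for all $i$'' is equivalent to semistandardness outright, not merely to semistandardness conditional on $\lambda^{\mathcal A}$ being a legal skew shape. Everything else is a routine check of which cell lies to the left of which.
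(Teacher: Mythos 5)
Your proof is correct, and it is exactly the sort of elementary verification the paper has in mind — the lemma is stated with \qed and no proof, and the paper's only gloss on it is the sentence ``we need to push down a column $A_i$ until the entries are to the left of larger or equal entries in column $A_{i+1}$,'' which is precisely the content of your second paragraph. Each step checks out: column-strictness is automatic; shape-validity and the row condition between columns $i$ and $i+1$ together reduce to $a_{i,j-m_i(\lambda)}\le a_{i+1,j}$ for $m_i(\lambda)<j\le|A_{i+1}|$ (with the $\infty$-convention correctly absorbing the case of a missing left neighbour, since $a_{i+1,j}$ is finite for $j\le|A_{i+1}|$); and the upward-closedness in $c$, together with vacuous validity at $c=|A_{i+1}|$, identifies the admissible set as $\{c_i,c_i+1,\ldots\}$. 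One small point worth flagging: your argument relies on $a_{i,1}<a_{i,2}<\cdots$, i.e.\ $a_{ij}$ is the $j$-th \emph{smallest} element of $A_i$, whereas the paper writes ``$j$-th largest''; the worked example (Figure with $\mathcal A=(\{1\},\{2,3\},\emptyset,\{1,4\},\{1,2,3\})$, giving $m_1\ge1$, $m_3\ge2$, $m_4\ge2$) and the requirement that columns increase downward both confirm your reading, so the paper's ``largest'' is evidently a slip. The step you singled out as a possible obstacle is in fact fine as you wrote it.
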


While the statement may seem complicated, it is just saying that we need to ``push down'' a column $A_i$ until the entries are to the left of larger or equal entries in column $A_{i+1}$. We say that \emph{$\lambda$ satisfies the LR condition for $\p A$} if the conditions of Lemma \ref{lemma1} are satisfied.

\begin{example}
 For the array $\p A = (\set 1, \set{2,3}, \emptyset, \set{1,4}, \set{1,2,3},\ldots)$,
 we have that $\lambda^{\p A}/\lambda$ is a semistandard Young tableau if and only if $m_1(\lambda) \geq 1$, $m_3(\lambda) \geq 2$ and $m_4(\lambda) \geq 2$. See Figure \ref{fig5}.

\begin{figure}[!ht] 
\begin{center}
 \ytableausetup{boxsize=1.2em}
\begin{ytableau}
{} & & & & 1 \\
& & & & 2 \\
& & & 1 & 3 \\
& & & 4 \\
& 2 \\
1 & 3
  \end{ytableau}
\caption{An illustration of LR conditions for an array.}\label{fig5}
\end{center}
\end{figure}
\end{example}

Of course, the semistandard Young tableaux that appear in Littlewood-Richardson rule have to satisfy an additional property, namely, the reverse reading word has to be a lattice permutation. Our crucial observation is the fact that whether or not the reverse reading word of the semistandard Young tableau $\lambda^{\p A}/\lambda$ is a lattice permutation or not depends only on $\p A$. We call an array $\p A$ an \emph{LR array} if the total number of $i$'s in sets $A_j, A_{j+1},\ldots$ is greater than or equal to the total number of $(i+1)$'s in these sets, for all $i$ and $j$. The array $\p A$ from the last example is an LR array, while $1 \cdot 1 2$ is not. We have the following result.

\begin{lemma} \label{lemma2}
 The reverse reading word of $\lambda^{\p A}/\lambda$ (for every $\lambda$ that satisfies the LR condition for $\p A$) is a lattice permutation if and only if $\p A$ is an LR array. \qed
\end{lemma}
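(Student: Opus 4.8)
The plan is to prove the two implications separately, exploiting the fact that, for \emph{every} valid $\lambda$, the columns of the skew tableau $\lambda^{\p A}/\lambda$ carry exactly the sets $A_1,A_2,\ldots$, each written in increasing order from top to bottom: only the way the rows interleave these columns depends on $\lambda$, the column data does not. (This is the manifestation, in our setting, of the classical equivalence between the row and column reading word characterizations of Littlewood--Richardson tableaux.)

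For the direction ``$\p A$ not an LR array $\Rightarrow$ some reverse reading word fails to be a lattice permutation'', it suffices to produce one witness $\lambda$. Taking $\lambda=\langle 1^N,2^N,\ldots,M^N\rangle$ with $M=\max\{i:A_i\neq\emptyset\}$ and $N$ enormous, the shape $\lambda^{\p A}/\lambda$ becomes a disjoint union of its columns arranged in a descending staircase — column $i$ lying entirely above column $i-1$ — so the reverse reading word is literally $A_MA_{M-1}\cdots A_1$ with each $A_i$ listed in increasing order. If $\p A$ violates the LR-array inequality for some index $j$, say there is a value $i$ with fewer $i$'s than $(i+1)$'s among $A_j,\ldots,A_M$, then the prefix that reads off all of $A_M,\ldots,A_j$ already contains more $(i+1)$'s than $i$'s, so it is not a ballot sequence.

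For the converse, ``$\p A$ an LR array $\Rightarrow$ the reverse reading word of $\lambda^{\p A}/\lambda$ is a lattice permutation for every valid $\lambda$'', I would argue prefix by prefix. A prefix $P$ of the reverse reading word is an upper set of cells — it contains, for each of its cells, the cell directly above it — so, since entries strictly increase down each column, the entries of column $c$ lying in $P$ form an initial segment $\{v\in A_c:v\leq f(c)\}$ for a threshold $f(c)$ (with $f(c)=-\infty$ if no cell of column $c$ is in $P$). A short count then gives $\#_i(P)-\#_{i+1}(P)\geq \#\{c:i\in A_c,\ i+1\notin A_c,\ f(c)\geq i\}-\#\{c:i+1\in A_c,\ i\notin A_c,\ f(c)\geq i+1\}$, so for the ballot condition at $P$ it suffices to prove
$$\#\{c : i \in A_c,\ i+1 \notin A_c,\ f(c) \geq i\}\ \geq\ \#\{c : i+1 \in A_c,\ i \notin A_c,\ f(c) \geq i+1\}.$$
Write $\mathcal X=\{c:i\in A_c,\ i+1\notin A_c\}=\{x_1>x_2>\cdots\}$ and $\mathcal Y=\{c:i+1\in A_c,\ i\notin A_c\}=\{y_1>y_2>\cdots\}$. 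The LR-array hypothesis, applied with this value $i$ and all $j$, says precisely that every suffix $[j,\infty)$ meets $\mathcal X$ at least as often as $\mathcal Y$, equivalently $|\mathcal X|\geq|\mathcal Y|$ and $x_t\geq y_t$ for all $t$. The key geometric claim is then: if $c\in\mathcal Y$ with $f(c)\geq i+1$ and $c'\in\mathcal X$ with $c'\geq c$, then $f(c')\geq i$. Indeed, the cell of $\lambda^{\p A}$ in column $c$ and in the row of column $c'$'s $i$-cell exists (the outer shape is a partition and $c\leq c'$); either it lies in $\lambda$, or it lies in the skew part and has entry $\leq i$, hence entry $<i$ because $i\notin A_c$; in either case column $c$'s $(i+1)$-cell is strictly lower than column $c'$'s $i$-cell, so the fact that column $c$'s $(i+1)$-cell is in $P$ — a downward-closed-in-rows condition within a column — forces column $c'$'s $i$-cell into $P$. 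Applying the claim along the matching $y_t\mapsto x_t$ yields the displayed inequality.

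The main obstacle will be this last step of the converse: verifying the geometric claim with full care, in particular the book-keeping of when the relevant dominating cell lies in $\lambda$ versus in the skew part, and being precise about the location of the columns of $\lambda^{\p A}$ after the ``push-down'' in the definition — and hence about the exact threshold $f(c)$ when $P$ terminates in the interior of a row. Everything else is routine: the passage between the suffix-majorization form of the LR-array condition and its original form, and the reduction of the ballot condition at $P$ to the displayed inequality, are elementary counting.
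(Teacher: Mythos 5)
The paper carries a terminal \qed{} on this lemma and offers no proof --- it is asserted as a (presumably standard) observation. So there is nothing in the paper to compare your argument against; I'll assess it on its own.

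Your proof is correct. For the ``only if'' direction, the witness $\lambda=\langle 1^N,\ldots,M^N\rangle$ with $N$ large does isolate the columns of $\lambda^{\p A}/\lambda$ into disjoint row-ranges (column $i$ occupying rows $(M-i+1)N+1,\ldots,(M-i+1)N+|A_i|$, with nothing else from the skew part in those rows), so the reverse reading word is literally $A_M A_{M-1}\cdots A_1$ with each $A_c$ listed in increasing order; the LR conditions $m_c(\lambda)\geq c_c$ are automatic once $N$ is large, and a failing suffix condition for $\p A$ at index $j$ gives a failing prefix of this word. For the ``if'' direction, the reduction of the ballot condition at a prefix $P$ to
$\#\{c\in\mathcal X:f(c)\geq i\}\geq\#\{c\in\mathcal Y:f(c)\geq i+1\}$
with $\mathcal X,\mathcal Y$ the columns containing $i$ but not $i+1$ and vice versa, the translation of the LR-array hypothesis into the majorization $|\mathcal X|\geq|\mathcal Y|$ and $x_t\geq y_t$, and the geometric claim all check out. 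For the claim itself: since $\lambda^{\p A}$ is a partition shape, the cell $(r,c)$ in the row $r$ of column $c'$'s $i$-cell exists; if it is in $\lambda$ then all skew cells of column $c$ sit strictly below row $r$, while if it is a skew cell then its entry is $\leq i$ by row-weakness and $<i$ because $i\notin A_c$, so again the $(i+1)$-cell of column $c$ is strictly below row $r$. Either way the $i$-cell of column $c'$ is read earlier, so $f(c')\geq i$, and the matching $y_t\mapsto x_t$ finishes the count. The phrasing ``downward-closed-in-rows condition within a column'' is slightly muddled --- the operative fact is simply that a prefix of the row-by-row reading order is closed under ``read earlier'', and strict row inequality already yields ``read earlier'' --- but the mathematics is right. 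In short, you have supplied a complete proof of what the paper treats as a known reformulation of the column-by-column ballot characterization of Littlewood--Richardson fillings.
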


Lemmas \ref{lemma1} and \ref{lemma2} allow us to formulate the following version of the Littlewood-Richardson rule.

\begin{theorem}
 The Littlewood-Richardson coefficient $c_{\lambda\mu}^\nu$ is equal to the number of LR arrays with the same shape as $\nu/\lambda$ and weight $\mu$ whose LR conditions $\lambda$ satisfies. \qed
\end{theorem}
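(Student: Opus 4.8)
The plan is to deduce this as a direct repackaging of the classical Littlewood-Richardson rule, using Lemmas~\ref{lemma1} and~\ref{lemma2} to translate the two conditions appearing there. By the version recalled above (Theorem~A1.3.3 of \cite{stanley}), $c_{\lambda\mu}^\nu$ counts the semistandard Young tableaux $T$ of shape $\nu/\lambda$ and weight $\mu$ whose reverse reading word is a lattice permutation (this is automatically $0$, with no tableaux, unless $\lambda \subseteq \nu$, which matches the array side since an array of that shape can only be formed when $\lambda'_i \le \nu'_i$ for all $i$). So it suffices to exhibit a weight-preserving bijection between such tableaux and the LR arrays whose shape equals the shape of $\nu/\lambda$ and whose LR conditions $\lambda$ satisfies.

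First I would set up the essentially tautological correspondence between fillings of the skew shape $\nu/\lambda$ and arrays of matching shape. Given $T$, let $A_i$ be the set of entries of $T$ in column $i$; since columns of $T$ strictly increase, each $A_i$ is a genuine set with $|A_i| = \nu'_i - \lambda'_i$, so $\p A = (A_1, A_2, \ldots)$ is an array whose shape is the shape of $\nu/\lambda$ and whose weight equals the weight of $T$. Conversely, given an array $\p A$ of that shape, fill column $i$ of $\nu/\lambda$ with the elements of $A_i$ in increasing order down the column; this is inverse to the previous map, and with $\lambda^{\p A}/\lambda = T$. Now $T = \lambda^{\p A}/\lambda$ is an honest semistandard Young tableau precisely when its rows are weakly increasing (columns being strict automatically), which is exactly the content of Lemma~\ref{lemma1}: $\lambda^{\p A}/\lambda$ is semistandard if and only if $\lambda$ satisfies the LR condition for $\p A$. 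Hence the correspondence restricts to a bijection between semistandard Young tableaux of shape $\nu/\lambda$ and weight $\mu$ and arrays $\p A$ of the shape of $\nu/\lambda$ and weight $\mu$ for which $\lambda$ satisfies the LR condition. Finally, Lemma~\ref{lemma2} says that for any such $\p A$ the reverse reading word of $\lambda^{\p A}/\lambda$ is a lattice permutation if and only if $\p A$ is an LR array; intersecting the two conditions and invoking the classical rule yields the claimed count.

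The only real work is bookkeeping, and I do not expect a genuine obstacle. One has to check that the shape of the array read off from $T$ is literally the tuple $(\nu'_i - \lambda'_i)_{i\ge 1}$, and that the indexing convention $a_{ij} = $ the $j$-th largest entry of $A_i$ used in Lemma~\ref{lemma1} is consistent with ``fill column $i$ in increasing order from the top,'' so that Lemmas~\ref{lemma1} and~\ref{lemma2} apply verbatim to the array produced by the bijection. All of the nontrivial combinatorial content has already been isolated in those two lemmas, so the proof amounts to assembling them with the classical rule.
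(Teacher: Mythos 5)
Your proposal is correct and takes essentially the same approach as the paper: the paper marks this theorem with an immediate \qed because it is a direct consequence of Lemmas \ref{lemma1} and \ref{lemma2} together with the classical rule, and your write-up spells out exactly that deduction (the column-wise correspondence between fillings and arrays, Lemma \ref{lemma1} handling semistandardness, Lemma \ref{lemma2} handling the lattice-word condition).
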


\begin{example}
 As noted before (in different words), there are three arrays of shape $1011$ and weight $21$, namely $2 \cdot 1 1$, $1 \cdot 2 1$ and $1 \cdot 1 2$, but only the first two are LR arrays. Since the LR conditions for $2 \cdot 1 1$ (resp., $1 \cdot 2 1$) are $m_2(\lambda) \geq 1$ (resp., $m_2(\lambda) \geq 1$, $m_3(\lambda) \geq 1$), then for $\nu$ with $\nu-\lambda=1011$ we have $c_{\lambda,21}^\nu \in \set{0,1,2}$, depending on whether neither, one, or both of these sets of conditions are satisfied. For example, since $\lambda = 32$ satisfies conditions for both $2 \cdot 1 1$ and $1 \cdot 2 1$ and the shape of $431/32$ is $1011$, we have $c_{32,21}^{431} = 2$.
\end{example}

The author readily admits that this is a very complicated way to state the Littlewood-Richardson rule. However, it seems possible that the Littlewood-Richardson rule for $k$-Schur functions could be stated in a similar way. 

\subsection{Toward a $k$-Littlewood-Richardson rule} \label{toward}

Define the \emph{$k$-Littlewood-Richardson coefficients} $c_{\lambda\mu}^{(k),\nu}$ by the formula 
$$s^{(k)}_\lambda s^{(k)}_\mu = \sum_\nu c_{\lambda\mu}^{(k),\nu} s^{(k)}_\nu$$
for $k$-bounded partitions $\lambda,\mu$, where the sum is over all $k$-bounded partitions $\nu$. Then there appear to exist \emph{$k$-LR conditions}, (relatively) simple conditions on the residue table of $\lambda$, similar to conditions in Theorems \ref{whs} and \ref{wvs}, so that $c_{\lambda\mu}^{(k),\nu}$ is the number of LR arrays of the same shape as $\nu/\lambda$ and weight $\lambda$ for which these conditions are satisfied. 

\medskip

The exact form of some of these conditions is the content of this subsection.

\medskip

Note that in particular we conjecture that $c_{\lambda\mu}^{(k),\nu}$ is bounded from above by the number of LR arrays of the same shape as $\nu/\lambda$ and weight $\lambda$, which is, of course, not at all obvious.

\medskip

Let us first restate Theorems \ref{whs} and \ref{wvs} in the language of $k$-Littlewood-Richardson coefficients. 

\medskip

Recall the natural analogues of the Pieri rule and the conjugate Pieri rule for $k$-Schur functions, \eqref{kpieri} and \eqref{kconjpieri}.

\medskip

There is clearly exactly one array of shape $s$ and weight $n$ if $s$ contains $n$ ones and the rest are zeros (and it is clearly also an LR array). Also, there is exactly one LR array of shape $s$ and weight $1^n$, as long as the size of $s$ is $n$. For example, the only LR array of shape $101101$ and weight $4$ is $1 \cdot 1 1 \cdot 1$, and the only LR array of shape $301201$ and weight $1^7$ is 
$$\begin{matrix}
5 & \cdot & 4 & 2 & \cdot & 1 \\
6 & & & 3 & & \\
7 & & & & & 
\end{matrix}$$

Based on Theorem \ref{whs}, it makes sense to define the {$k$-LR conditions} for an array $\p A = (A_1,A_2,\ldots)$ with $A_i$ either $\set 1$ or $\emptyset$ as $r_{i,j-1}(\lambda) > 0$ for $A_i = \emptyset$, $A_j = \set 1$.

\medskip

Because of Theorem \ref{wvs}, it makes sense to define the {$k$-LR conditions} for an array $\p A = (A_1,A_2,\ldots)$ for which the numbers in each $A_i$ are consecutive, every number in $A_i$ is larger than any number in $A_j$ for $i < j$, and $\bigcup A_i = [n]$ for some $n$, as follows:
$$|A_j| \leq r_{i,j-1} \leq k + 1 - j - |A_i| + |\{ h \colon i \leq h \leq j-2, r_{ih} > k-h-|A_i|, r_{ih} > r_{i,j-1} \} | $$
 for $i < j$, $A_j \neq \emptyset$. This allows us to state the following.

\medskip

The $k$-Littlewood-Richardson coefficient $c_{\lambda, n}^{(k),\nu}$ (resp., $c_{\lambda,1^n}^{(k),\nu}$) is equal to the number of LR arrays of shape $\nu-\lambda$ and weight $n$ (resp., $1^n$) whose $k$-LR conditions $\lambda$ satisfies.

\medskip

These two sets of conditions have something in common. Namely, given an LR array $\p A$ of weight $\mu$, we have some lower and/or upper bounds on $r_{i,j-1}$ for each $i,j$, $i < j$, based on what $A_i$ and $A_j$ are (note that these bounds can involve other $r_{ih}$ for $i \leq h \leq j-2$).

\medskip

One would hope that such conditions exist for all LR arrays. As in Section \ref{mn}, it makes sense to first guess such conditions in the simplest case, namely when the array consists of only $1$'s and $2$'s. The main difficulty in guessing the correct conditions is that while the $k$-Littlewood-Richardson coefficients $c_{\lambda, n}^{(k),\nu}$ and $c_{\lambda,1^n}^{(k),\nu}$ are always $0$ or $1$, this does not, of course, hold for $c_{\lambda \mu}^{(k),\nu}$ for arbitrary $\mu$. To illustrate, take $\mu = 22$. There are two LR arrays of shape $1111$ and weight $22$, namely, $2211$ and $2121$, and indeed $c_{\lambda,22}^{(k),\nu}$ is always either $0$, $1$ or $2$ when $\nu/\lambda$ has shape $1111$ (for all $k$ checked). Among the $120$ $5$-irreducible partitions, the desired $k$-Littlewood-Richardson coefficients are all $0$ or $1$; the following shows the residue tables of all $24$ of them with the coefficient equal to $1$.

$$
\begin{smallmatrix}
 0 & 1 & 1 & 1 & 0 \\
   & 1 & 1 & 1 & 0 \\
   &   & 0 & 0 & 0 \\
   &   &   & 0 & 0 \\
   &   &   &   & 0
\end{smallmatrix}\quad
\begin{smallmatrix}
 0 & 1 & 1 & 0 & 0 \\
   & 1 & 1 & 0 & 0 \\
   &   & 0 & 1 & 0 \\
   &   &   & 1 & 0 \\
   &   &   &   & 0
\end{smallmatrix}\quad
\begin{smallmatrix}
 0 & 2 & 1 & 1 & 0 \\
   & 2 & 1 & 1 & 0 \\
   &   & 2 & 0 & 0 \\
   &   &   & 0 & 0 \\
   &   &   &   & 0
\end{smallmatrix}\quad
\begin{smallmatrix}
 0 & 2 & 1 & 0 & 0 \\
   & 2 & 1 & 0 & 0 \\
   &   & 2 & 1 & 0 \\
   &   &   & 1 & 0 \\
   &   &   &   & 0
\end{smallmatrix}\quad
\begin{smallmatrix}
 1 & 1 & 2 & 0 & 0 \\
   & 0 & 1 & 1 & 0 \\
   &   & 1 & 1 & 0 \\
   &   &   & 0 & 0 \\
   &   &   &   & 0
\end{smallmatrix}\quad
\begin{smallmatrix}
 1 & 1 & 2 & 1 & 0 \\
   & 0 & 1 & 0 & 0 \\
   &   & 1 & 0 & 0 \\
   &   &   & 1 & 0 \\
   &   &   &   & 0
\end{smallmatrix}\quad
\begin{smallmatrix}
 1 & 3 & 1 & 1 & 0 \\
   & 2 & 0 & 0 & 0 \\
   &   & 1 & 1 & 0 \\
   &   &   & 0 & 0 \\
   &   &   &   & 0
\end{smallmatrix}\quad
\begin{smallmatrix}
 1 & 3 & 1 & 0 & 0 \\
   & 2 & 0 & 1 & 0 \\
   &   & 1 & 0 & 0 \\
   &   &   & 1 & 0 \\
   &   &   &   & 0
\end{smallmatrix}
$$
$$
\begin{smallmatrix}
 2 & 2 & 0 & 0 & 0 \\
   & 0 & 1 & 1 & 0 \\
   &   & 1 & 1 & 0 \\
   &   &   & 0 & 0 \\
   &   &   &   & 0
\end{smallmatrix} \quad
\begin{smallmatrix}
 2 & 2 & 0 & 1 & 0 \\
   & 0 & 1 & 0 & 0 \\
   &   & 1 & 0 & 0 \\
   &   &   & 1 & 0 \\
   &   &   &   & 0
\end{smallmatrix} \quad
\begin{smallmatrix}
 2 & 3 & 1 & 1 & 0 \\
   & 1 & 2 & 0 & 0 \\
   &   & 1 & 1 & 0 \\
   &   &   & 0 & 0 \\
   &   &   &   & 0
\end{smallmatrix} \quad
\begin{smallmatrix}
 2 & 3 & 1 & 0 & 0 \\
   & 1 & 2 & 1 & 0 \\
   &   & 1 & 0 & 0 \\
   &   &   & 1 & 0 \\
   &   &   &   & 0
\end{smallmatrix} \quad
\begin{smallmatrix}
 2 & 0 & 1 & 1 & 0 \\
   & 2 & 0 & 0 & 0 \\
   &   & 1 & 1 & 0 \\
   &   &   & 0 & 0 \\
   &   &   &   & 0
\end{smallmatrix} \quad
\begin{smallmatrix}
 2 & 0 & 1 & 0 & 0 \\
   & 2 & 0 & 1 & 0 \\
   &   & 1 & 0 & 0 \\
   &   &   & 1 & 0 \\
   &   &   &   & 0
\end{smallmatrix} \quad
\begin{smallmatrix}
 2 & 1 & 2 & 0 & 0 \\
   & 3 & 1 & 1 & 0 \\
   &   & 1 & 1 & 0 \\
   &   &   & 0 & 0 \\
   &   &   &   & 0
\end{smallmatrix} \quad
\begin{smallmatrix}
 2 & 1 & 2 & 1 & 0 \\
   & 3 & 1 & 0 & 0 \\
   &   & 1 & 0 & 0 \\
   &   &   & 1 & 0 \\
   &   &   &   & 0
\end{smallmatrix}
$$

$$
\begin{smallmatrix}
 3 & 0 & 1 & 1 & 0 \\
   & 1 & 2 & 0 & 0 \\
   &   & 1 & 1 & 0 \\
   &   &   & 0 & 0 \\
   &   &   &   & 0
\end{smallmatrix} \quad
\begin{smallmatrix}
 3 & 0 & 1 & 0 & 0 \\
   & 1 & 2 & 1 & 0 \\
   &   & 1 & 0 & 0 \\
   &   &   & 1 & 0 \\
   &   &   &   & 0
\end{smallmatrix} \quad
\begin{smallmatrix}
 3 & 2 & 0 & 0 & 0 \\
   & 3 & 1 & 1 & 0 \\
   &   & 1 & 1 & 0 \\
   &   &   & 0 & 0 \\
   &   &   &   & 0
\end{smallmatrix} \quad
\begin{smallmatrix}
 3 & 2 & 0 & 1 & 0 \\
   & 3 & 1 & 0 & 0 \\
   &   & 1 & 0 & 0 \\
   &   &   & 1 & 0 \\
   &   &   &   & 0
\end{smallmatrix} \quad
\begin{smallmatrix}
 4 & 1 & 1 & 1 & 0 \\
   & 1 & 1 & 1 & 0 \\
   &   & 0 & 0 & 0 \\
   &   &   & 0 & 0 \\
   &   &   &   & 0
\end{smallmatrix} \quad
\begin{smallmatrix}
 4 & 1 & 1 & 0 & 0 \\
   & 1 & 1 & 0 & 0 \\
   &   & 0 & 1 & 0 \\
   &   &   & 1 & 0 \\
   &   &   &   & 0
\end{smallmatrix} \quad
\begin{smallmatrix}
 4 & 2 & 1 & 1 & 0 \\
   & 2 & 1 & 1 & 0 \\
   &   & 2 & 0 & 0 \\
   &   &   & 0 & 0 \\
   &   &   &   & 0
\end{smallmatrix} \quad
\begin{smallmatrix}
 4 & 2 & 1 & 0 & 0 \\
   & 2 & 1 & 0 & 0 \\
   &   & 2 & 1 & 0 \\
   &   &   & 1 & 0 \\
   &   &   &   & 0
\end{smallmatrix}
$$

From this data, one has to guess two sets of conditions on residue tables so that precisely one of them is satisfied for all the tables above, and neither of them is satisfied for any of the other $96$ tables. It is easy to see that the following sets of conditions work:

\begin{itemize}
\item the condition
$$r_{14} \neq r_{34} \mbox{ and } r_{14} \neq r_{44} \mbox{ and } r_{24} \neq r_{34} \mbox{ and } r_{24} \neq r_{44}$$
is satisfied for the first four and the last four of the residue tables above, but not for others; we declare them the $k$-LR conditions for $2211$
 \item the condition
$$r_{14} \neq r_{24} \mbox{ and } r_{34} \neq r_{44}$$
is satisfied for the residue tables $5$--$20$ above, but not for the others; we declare them the $k$-LR conditions for $2121$.
\end{itemize}

These exact same conditions work for $k = 6, 7, 8, 9$ as well, as a straightforward (and time consuming) computer check shows.

\medskip

The author was able to obtain (and check for a large number of cases) such conditions for LR arrays with an arbitrary number of $1$'s, at most two $2$'s, and no dots before or in between\footnote{In a previous version of this manuscript, these conditions were written in a much more complicated manner in terms of the first $n-1$ columns of the residue table; it was Mar\' ia Elena Pinto who realized that the conditions are much simpler if we just observe the $n$-th column.}.

\medskip 

For no $2$'s (\textit{i.e.}, for the LR array $\p A = 11\ldots 1$), there are no conditions to satisfy; this is consistent with Theorem \ref{whs}. For one $2$, \textit{i.e.}\ for an LR array $\p A = 1\ldots 1 2 1 \ldots 1$ (with $2$ in position $I$ and total length $n$), the condition is $r_{I,n} \neq r_{j,n}$ for $j = I+1,\ldots,n$.

\begin{example}
 Let us say that we want to compute the coefficient of $s_{65444433222}^{(8)}$ in $s_{5444433222}^{(8)} s_{51}^{(8)}$. Note that $65444433222 = 5444433222^{\set{1,2,3,4,5,6}}$, so we are interested in LR arrays of shape $111111$ and weight $51$. These arrays, and the conditions, are given by the following:
 \begin{description}
  \item[211111] $r_{16} \neq r_{26},r_{36},r_{46},r_{56},r_{66}$
  \item[121111] $r_{26} \neq r_{36},r_{46},r_{56},r_{66}$
  \item[112111] $r_{36} \neq r_{46},r_{56},r_{66}$
  \item[111211] $r_{46} \neq r_{56},r_{66}$
  \item[111121] $r_{56} \neq r_{66}$
 \end{description}
 The residue table of $5444433222$ (for $k = 8$) is
 $$ \begin{smallmatrix}
  0 & 3 & 5 & 4 & 1 & 1 & 1 & 0 \\
   & 3 & 5 & 4 & 1 & 1 & 1 & 0 \\
   &   & 2 & 1 & 2 & 2 & 0 & 0 \\
   &   &   & 4 & 1 & 1 & 1 & 0 \\
   &   &   &   & 1 & 1 & 1 & 0 \\
   &   &   &   &   & 0 & 0 & 0 \\
   &   &   &   &   &   & 0 & 0 \\
   &   &   &   &   &   &   & 0 \\
  \end{smallmatrix}$$
  and it is easy to check that the conditions for $112111$ and $111121$ are satisfied, while the conditions for $211111$, $121111$, and $111211$ are not (because of, for example, $r_{16} = r_{26}$, $r_{26} = r_{46}$, and $r_{46} = r_{56}$, respectively), so $c_{5444433222,51}^{(8),65444433222} = 2$, as can be checked independently.
\end{example}

Say that we have an LR array of shape $11\ldots 1$ ($n$ ones) and weight $1^{n-2} 2^2$, in other words, we have $\p A = 1\ldots 1 2 1 \ldots 1 2 1 \ldots 1$, with $2$'s in positions $I$ and $J$. If $J \leq n - 2$, the conditions are $r_{In} \neq r_{jn}$ for $j = I+1,\ldots,J-1,J+1,\ldots,n$, $r_{Jn} \neq r_{jn}$ for $j = J+1,\ldots,n$. On the other hand, if $J = n - 1$, the condition are $r_{In} \neq r_{jn}$ for $j = I+1,\ldots,n-2$, $r_{n-1,n} \neq r_{nn}$.

\medskip

Example \ref{exintro} in Section \ref{intro} illustrates this for $n = 6$. The conditions \textbf{C1}--\textbf{C9} correspond to LR arrays $221111$, $212111$, $211211$, $211121$, $122111$, $121211$, $121121$, $112211$, $112121$, respectively.

\medskip

These conditions can be described in a unified manner: for an LR array $\p A$ consisting of $1$'s and at most two $2$'s, we get the condition $r_{in} \neq r_{jn}$ if and only if $\p A$ has $2$ in position $i$, $1$ in position $j$, unless $i \leq n-2$, $j = n$, and $\p A$ has $2$ in position $n-1$.

\medskip

One would hope for a general pattern: for an LR array $\p A$ consisting of $1$'s and $2$'s, we get the condition $r_{in} \neq r_{jn}$ if and only if $\p A$ has $2$ in position $i$, $1$ in position $j$, unless the number of $1$'s and $2$'s from position $l$ onwards is the same for some $l$, $i < l < j$. Another way to state this is as follows. If we interpret $1$ as an NE step and $2$ as a SE step and read the LR array from right to left, we get a path that starts on the $x$-axis never goes below it. Such a path can be divided into primitive Dyck paths, and another (possibly empty) primitive path that never returns to the $x$-axis. Every $2$ and $1$ in positions $i$ and $j$, $i < j$, that belong to the same primitive path give a condition $r_{in} \neq r_{jn}$.

\medskip

Unfortunately, such conditions fails for $\mu = 33$, \textit{i.e.} for LR arrays with three $1$'s and three $2$'s. They work for $k \leq 7$, but hold only for approximately $99.5\%$ of irreducible $8$-bounded partitions, $99\%$ of irreducible $9$-bounded partitions, $98.7\%$ of irreducible $10$-bounded partitions, etc.

\medskip

Perhaps a variant of this could work. After such conditions are found, the following would have to be resolved before a full conjecture on $k$-Littlewood-Richardson coefficients can be formed:
\begin{itemize}
 \item allow $\mu$ to have length $3$ or more, \textit{i.e.}\ allow $\p A$ to contain $1$'s, $2$'s, $3$'s etc.;
 \item figure out what happens when ``dots'' are added, \textit{i.e.}\ when we are allowed to have $A_i = \emptyset$ and $A_i = \set j$; 
 \item allow $\p A$ to be arbitrary, \textit{i.e.}\ allow $A_i$'s to contain any number of elements.
\end{itemize}

Judging from the work done so far, and perhaps counterintuitively, the author believes these three tasks will be easier than the figuring out the general case with $1$'s and $2$'s only.

\section{Proofs} \label{proofs}

This section contains two proofs that are quite technical and not necessary for the understanding of the main results of this paper.

\begin{proof}[Proof of Theorem \ref{size}]
 Think of the cells in $\mf c^{(k)}(\lambda)$ with hook-length $> k$ as empty. Our goal is to count the empty cells of $\mf c^{(k)}(\lambda)$.\\
 For $0 \leq J \leq k$, again write $\lambda_{(J)}$ for $\langle 1^{m_1(\lambda)},\ldots,J^{m_J(\lambda)} \rangle$. We prove that
 $$|\mf c^{(k)}(\lambda_{(J)})| - |\lambda_{(J)}| = \!\!\!\! \sum_{1 \leq i \leq j \leq J}\!\!\!\! r_{ij} q_{ij}+ \!\!\!\!\sum_{1 \leq i \leq j < h \leq J}\!\!\!\! r_{hh}q_{ij} + \!\!\!\!\sum_{1 \leq i \leq j \leq h \leq J}\!\!\!\! (k+1-h)q_{hh}q_{ij} - \!\!\sum_{1 \leq i \leq J	}\!\! i (k+1-i) \left(\begin{smallmatrix}q_{ii}+1 \\ 2\end{smallmatrix}\right)$$
 by induction on $J$, the theorem is this statement for $J = k$. Denote the sum on the right by $A$.\\
 The equality is obvious for $J = 0$, assume that it holds for $J - 1$. We prove the statement by induction on $m = m_J(\lambda_{(J)})$.\\
 If $m = 0$, $\lambda_{(J)} = \lambda_{(J-1)}$. By the (outer) induction hypothesis,
 \begin{align*}
  |\mf c^{(k)}(\lambda_{(J)})| - |\lambda_{(J)}| &= |\mf c^{(k)}(\lambda_{(J-1)})| - |\lambda_{(J-1)}| \\ 
&= \!\!\!\!\! \sum_{1 \leq i \leq j < J}\!\!\!\!\! r_{ij} q_{ij}+ \!\!\!\!\!\sum_{1 \leq i \leq j < h < J}\!\!\!\!\! r_{hh}q_{ij} + \!\!\!\!\!\sum_{1 \leq i \leq j \leq h < J}\!\!\!\!\! (k+1-h)q_{hh}q_{ij} - \!\!\!\sum_{1 \leq i < J}\! i (k+1-i) \left(\begin{smallmatrix}q_{ii}+1 \\ 2\end{smallmatrix}\right) 
 \end{align*}
 To complete the base of (inner) induction, we need to see that none of these four sums change when we replace $< J$ by $\leq J$. Since $r_{JJ} = q_{JJ} = 0$, this is clear for the second, third, and fourth sum. By definition of residue and quotient tables, $r_{iJ} = r_{i,J-1} \md (k+1-J)$ and $q_{iJ} = r_{i,J-1} \dv (k+1-J)$. Since $r_{i,J-1} \leq k+1-J$, $q_{iJ} \neq 0$ implies $r_{iJ} = 0$. Therefore $\sum_{1 \leq i \leq j < J} r_{ij} q_{ij} = \sum_{1 \leq i \leq j \leq J} r_{ij} q_{ij}$ as well.\\
 Now assume that $m > 0$ and that the statement holds for the partition $\bar \lambda$ obtained from $\lambda$ by removing one copy of $J$; denote its residue and quotient tables by $\bar R$ and $\bar Q$, respectively. By (inner) induction,
 $$|\mf c^{(k)}(\bar \lambda_{(J)})| - |\bar \lambda_{(J)}| = \!\!\!\! \sum_{1 \leq i \leq j \leq J}\!\!\!\! \bar r_{ij} \bar q_{ij}+ \!\!\!\!\sum_{1 \leq i \leq j < h \leq J}\!\!\!\! \bar r_{hh}\bar q_{ij} + \!\!\!\!\sum_{1 \leq i \leq j \leq h \leq J}\!\!\!\! (k+1-h)\bar q_{hh}\bar q_{ij} - \!\!\sum_{1 \leq i \leq J}\!\! i (k+1-i) \left(\begin{smallmatrix}\bar q_{ii}+1 \\ 2\end{smallmatrix}\right).$$
 Denote the sum on the right by $B$. We also know that $\mf c^{(k)}(\bar \lambda_{(J)})$ has $C = \sum_{1 \leq i \leq j \leq J} \bar q_{ij}$ columns of length $> k - J$, and the lengths of the other columns are precisely the non-zero entries of column $J$ of $\bar R$. When adding a new row of length $J$ to the core $\mf c^{(k)}(\bar \lambda_{(J)})$, we push it to the right until its left-most cell is positioned above the first column of $\mf c^{(k)}(\bar \lambda_{(J)})$ of length $\leq k - J$. In other words, we have increased the number of empty cells by $C$. Therefore it remains to prove that $B + C = A$. Since $r_{ij} = \bar r_{ij}$ and $q_{ij} = \bar q_{ij}$ for $1 \leq i \leq j < J$, this is equivalent to
 \begin{align*}
 &\phantom{=}\sum_{i=1}^J \bar r_{iJ} \bar q_{iJ} + \bar r_{JJ} \sum_{1 \leq i \leq j < J} q_{ij} + (k+1-J)\bar q_{JJ} \sum_{1 \leq i \leq j \leq J} \bar q_{ij} - J(k+1-J) \smallbinom{\bar q_{JJ}+1}2 + \sum_{1 \leq i \leq j \leq J} \bar q_{ij} \\
 &= \sum_{i=1}^J r_{iJ} q_{iJ} + r_{JJ} \sum_{1 \leq i \leq j < J} q_{ij} + (k+1-J)q_{JJ} \sum_{1 \leq i \leq j \leq J} q_{ij} - J(k+1-J) \smallbinom{q_{JJ}+1}2 \qquad \qquad  (*)
 \end{align*}
 Denote by $S$ the set of $i$, $1 \leq i \leq J$, for which $r_{iJ} = 0$.\\
 (a) First assume that $J \notin S$. For all $i \in S$, we have $\bar r_{iJ} = k - J$, $\bar q_{iJ} + 1 = q_{iJ} = q_{JJ} + 1$, and for all $i \notin S$, we have $\bar r_{iJ} + 1 = r_{iJ}$, $\bar q_{iJ} = q_{iJ}$. The sums on the left-hand side of $(*)$ equal
 \begin{align*} 
\sum_{i=1}^J \bar r_{iJ} \bar q_{iJ} &= \!\sum_{i \in S} (k-J) (q_{iJ} - 1) + \! \sum_{i \notin S} (r_{iJ}-1) q_{iJ} \\
 & = \left[(k-J) q_{JJ} |S| - \sum_{i \notin S} q_{iJ}\right] \! +\!\sum_{i =1}^J r_{iJ} q_{iJ},\\
 \bar r_{JJ} \sum_{1 \leq i \leq j < J} q_{ij} &= \left[ - \sum_{1 \leq i \leq j < J} q_{ij}\right] + r_{JJ} \sum_{1 \leq i \leq j < J} q_{ij}, \\
 (k+1-J)\bar q_{JJ} \sum_{1 \leq i \leq j \leq J} \bar q_{ij} &= (k+1-J)q_{JJ} \left( \sum_{1 \leq i \leq j \leq J} q_{ij} - |S| \right) \\
 &= \bigg[- (k+1-J)q_{JJ}|S|\bigg] + (k+1-J)q_{JJ} \sum_{1 \leq i \leq j \leq J} q_{ij}, \\
 - J(k+1-J)\smallbinom{\bar q_{JJ}+1}2 &= - J(k+1-J)\smallbinom{q_{JJ}+1}2,\\
 \sum_{1 \leq i \leq j \leq J} \!\bar q_{ij} &= \!\left[\!\sum_{1 \leq i \leq j < J} q_{ij} + |S| q_{JJ} + \sum_{i \notin S} q_{iJ}\right],
 \end{align*}
 where the sums that do not appear on the right-hand side of $(*)$ are enclosed in brackets. Sum the left-hand sides of the last five equalities and subtract the right-hand side of $(*)$:
$$(k-J) q_{JJ} |S| - \sum_{i \notin S} q_{iJ} - \sum_{1 \leq i \leq j < J} q_{ij}  - (k+1-J)q_{JJ}|S| + \sum_{1 \leq i \leq j < J} q_{ij} + |S| q_{JJ} + \sum_{i \notin S} q_{iJ}=0$$
 (b) The second case is when $J \in S$; then $\bar r_{iJ} = k - J$, $r_{iJ} = 0$, $\bar q_{iJ} + 1 = q_{iJ}$ for all $i \in S$, and for all $i \notin S$, we have $\bar r_{iJ} + 1 = r_{iJ}$, $\bar q_{iJ} = q_{iJ} = q_{JJ}$. The five sums are now
\begin{align*}
\sum_{i=1}^J \bar r_{iJ} \bar q_{iJ} &= \!\sum_{i \in S} (k-J) (q_{iJ} - 1) + \! \sum_{i \notin S} (r_{iJ}-1) q_{iJ} \\
 & = \left[(k-J) \sum_{i \in S} q_{iJ} - (k-J)|S| - \sum_{i \notin S} q_{iJ}\right] \! +\!\sum_{i =1}^J r_{iJ} q_{iJ},\\
  \bar r_{JJ} \sum_{1 \leq i \leq j < J} q_{ij} &=  \left[(k-J) \sum_{1 \leq i \leq j < J} q_{ij}\right] + r_{JJ} \sum_{1 \leq i \leq j < J} q_{ij}, \\
 (k+1-J)\bar q_{JJ} \!\!\!\sum_{1 \leq i \leq j \leq J}\!\!\! \bar q_{ij} &= (k+1-J)(q_{JJ}-1) \left( \sum_{1 \leq i \leq j \leq J} \!\!\!q_{ij} - |S| \right) \\
 &= \left[- (k+1-J)\left((q_{JJ}-1)|S| + \!\!\!\sum_{1 \leq i \leq j \leq J} \!\!\!q_{ij} \right)\right] + (k+1-J)q_{JJ} \!\!\!\sum_{1 \leq i \leq j \leq J}\!\!\! q_{ij}, \\
 - J(k+1-J)\smallbinom{\bar q_{JJ}+1}2 &= \bigg[ J(k+1-J)q_{JJ} \bigg] - J(k+1-J)\smallbinom{q_{JJ}+1}2,\\
\sum_{1 \leq i \leq j \leq J} \!\bar q_{ij} &= \left[\sum_{1 \leq i \leq j < J} q_{ij} + \sum_{i \in S} q_{iJ} - |S| + \sum_{i \notin S} q_{iJ}\right],
\end{align*}
Summing the left-hand sides of these equations and subtracting the right-hand side of $(*)$, we get
\begin{align*}
& \left[(k-J) \sum_{i \in S} q_{iJ} - \cancel{(k-J)|S|} - \bcancel{\sum_{i \notin S} q_{iJ}}\right] + \left[(k-J) \sum_{1 \leq i \leq j < J} q_{ij}\right] \\
&+ \left[- (k+1-J)\left((q_{JJ}-\cancel{1})|S| + \sum_{1 \leq i \leq j \leq J} q_{ij} \right)\right] + \bigg[ J(k+1-J)q_{JJ} \bigg] \\
& + \left[\sum_{1 \leq i \leq j < J} q_{ij} + \sum_{i \in S} q_{iJ} - \cancel{|S|} + \bcancel{\sum_{i \notin S} q_{iJ}}\right] \\
& = \cancel{(k-J) \sum_{i \in S} q_{iJ}} + \bcancel{(k-J) \sum_{1 \leq i \leq j < J} q_{ij}} - \xcancel{(k+1-J) q_{JJ}|S|} \\
& - (k+1-J)  \left( \bcancel{\sum_{1 \leq i \leq j < J}  q_{ij}} + \cancel{\sum_{i \in S} q_{iJ}} + \xcancel{(J - |S|) q_{JJ}}\right) + \xcancel{J(k+1-J)q_{JJ}} + \bcancel{\sum_{1 \leq i \leq j < J} q_{ij}} +  \cancel{\sum_{i \in S} q_{iJ}} \\
& = 0
\end{align*}
This completes the proof.
\end{proof}

\begin{proof}[Proof of Lemma \ref{lemma}]
 The proof of (a) is a careful examination of all cases and is by induction on $j \geq i$ for $i$ fixed. First recall that for $j = 1,\ldots,k$,

 $$m'_j = \left\{ 
\begin{array}{ccl}
 m_j + 1 & \colon j \in S, j+1 \notin S \\
 m_j - 1 & \colon j \notin S, j+1 \in S \\
 m_j & \colon j \in S, j+1 \in S \\
 m_j & \colon j \notin S, j+1 \notin S
\end{array}
\right.$$
 \begin{description}
  \item[Case I] $i \notin S$:\\
  We have $r_{ii} = m_i \md (k+1-i)$, $r'_{ii} = m'_i \md (k+1-i)$, $q_{ii} = m_i \dv (k+1-i)$, $q'_{ii} = m'_i \dv (k+1-i)$. If $i+1 \in S$, then $m_i' = m_i-1$ and $r_{ii} > 0$, so $r'_{ii} = (m_i-1) \md (k+1-i) = r_{ii}-1$ and $q'_{ii} = (m_i-1) \dv (k+1-i) = q_{ii}$. If $i+1 \notin S$, then $m_i' = m_i$ and $r'_{ii} = m_i \md (k+1-i) = r_{ii}$ and $q'_{ii} = m_i \dv (k+1-i) = q_{ii}$. This proves the case $i = j$. For $j > i$, we have four cases:
  \begin{description}
   \item[Case I.1] $j \in S$, $j+1 \in S$. In this case we have $r'_{i,j-1} = r_{i,j-1}-1$ by induction, $m_j' = m_j$. Therefore $r'_{ij} = (r'_{i,j-1} + m_j) \md (k+1-j) = (r_{i,j-1} - 1 + m_j) \md (k+1-j) = r_{ij} - 1$ because both $r_{i,j-1}$ and $r_{ij}$ are positive.
   \item[Case I.2] $j \notin S$, $j+1 \in S$. In this case we have $r'_{i,j-1} = r_{i,j-1}$ by induction, $m_j' = m_j-1$. Therefore $r'_{ij} = (r'_{i,j-1} + m_j) \md (k+1-j) = (r_{i,j-1} + m_j -1) \md (k+1-j) = r_{ij} - 1$ because $r_{ij}$ is positive.
   \item[Case I.3] $j \in S$, $j+1 \notin S$. In this case we have $r'_{i,j-1} = r_{i,j-1}-1$ by induction, $m_j' = m_j+1$. Therefore $r'_{ij} = (r'_{i,j-1} + m_j) \md (k+1-j) = (r_{i,j-1} - 1 + m_j + 1) \md (k+1-j) = r_{ij}$ because $r_{i,j-1}$ is positive.
   \item[Case I.4] $j \notin S$, $j+1 \notin S$. In this case we have $r'_{i,j-1} = r_{i,j-1}$ by induction, $m_j' = m_j$. Therefore $r'_{ij} = (r'_{i,j-1} + m_j) \md (k+1-j) = (r_{i,j-1} + m_j) \md (k+1-j) = r_{ij}$.
  \end{description}
  In all four cases, $q'_{ij} = q_{ij}$.
  \item[Case II] $i \in S$:\\
  If $i + 1 \in S$, then in particular $i < h(i)$, so we have to prove that $r'_{ii} = r_{ii}$ and $q'_{ii} = q_{ii}$. Since $m'_i = m_i$ and $r_{ii} = m_i \md (k+1-i)$, $r'_{ii} = m'_i \md (k+1-i)$, $q_{ii} = m_i \dv (k+1-i)$, $q'_{ii} = m'_i \dv (k+1-i)$, this is obvious.\\
  If $i + 1 \notin S$ and $i < h(i)$, then we have $r_{ii} < k - i$ (otherwise $h(i)$ would equal $i$). Then $m'_i = m_i + 1$ and $r'_{ii} = (m_i + 1) \md (k+1-i) = r_{ii}+1$, $q'_{ii} = (m_i + 1) \dv (k+1 - i) = q_{ii}$. If $i+1 \notin S$ and $i = h(i)$, then $r_{ii} = k - i$ and so $r'_{ii} = (m_i + 1) \md (k+1-i) = 0$ and $q'_{ii} = (m_i + 1) \dv (k+1-i) = q_{ii} + 1$. This completes the case $i = j$. For $j > i$, we have five cases, each of which has two or three subcases. But first let us prove the last statement of (a). By definition of $h(i)$, we have $r_{i,h(i)} = k - h(i)$, and since $m_{h(i)}$ is either $m_{h(i)}$ or $m_{h(i)+1}$, it follows that $r'_{i,h(i)}$ is either $k-h(i)$ or $0$. In either case, $r_{i,h(i)+1} = r_{h(i)+1,h(i)+1}$ and $r'_{i,h(i)+1} = r'_{h(i)+1,h(i)+1}$, $r_{i,h(i)+2} = r_{h(i)+1,h(i)+2}$ and $r'_{i,h(i)+2} = r'_{h(i)+1,h(i)+2}$, \textit{etc.}
  \begin{description}
   \item[Case II.1] $j+1 \in S$, $j < h(i)$: We need to prove $r'_{ij} = r_{ij}$ and $q'_{ij} = q_{ij}$:
   \begin{description}
    \item[Case II.1.a] $j \in S$: Since $j-1 < h(i)$, we have $r'_{i,j-1} = r_{i,j-1}$ by induction and $m'_j = m_j$ because $j,j+1 \in S$, so $r'_{ij} = (r'_{i,j-1} + m'_j) \md (k+1-j) = (r_{i,j-1} + m_j) \md (k+1-j) = r_{ij}$ and $q'_{ij} = (r'_{i,j-1} + m'_j) \dv (k+1-j) = (r_{i,j-1} + m_j) \dv (k+1-j) = q_{ij}$.
    \item[Case II.1.b] $j \notin S$: We have $r'_{i,j-1} = r_{i,j-1}+1$ and $m'_j = m_j -1$, so $r'_{ij} = r_{ij}$ and $q'_{ij} = q_{ij}$.
   \end{description}
   \item[Case II.2] $j+1 \in S$, $j > h(i)$: We need to prove that $r'_{ij} = r_{ij}-1$, and that $q'_{ij} = q_{ij}$ if $j > h(i) +1$ and $q'_{ij} = q_{ij}-1$ if $j = h(i) + 1$:
   \begin{description}
    \item[Case II.2.a] $j \in S$: In this case $j - 1 > h(i)$, so $r'_{i,j-1} = r_{i,j-1} - 1$ and $m'_j = m_j$; therefore $r'_{ij} = r_{ij} - 1$ and $q'_{ij} = q_{ij}$.
    \item[Case II.2.b] $j \notin S$, $j - 1 > h(i)$: Now $r'_{i,j-1} = r_{i,j-1}$ and $m'_j = m_j - 1$; we know that $r'_{ij} = r'_{h(i)+1,j}$ and $r_{ij} = r_{h(i)+1,j} > 0$ so $r'_{ij} = r_{ij} - 1$ and $q'_{ij} = q_{ij}$.
    \item[Case II.2.c] $j \notin S$, $j - 1 = h(i)$: Now $r_{i,j-1} = k+1-j$, $r'_{i,j-1} = 0$, $m'_j = m_j-1$; we know that $r'_{ij} = r'_{h(i)+1,j}$ and $r_{ij} = r_{h(i)+1,j} > 0$ and therefore $r'_{ij} = r_{ij} - 1$, $q'_{ij} = q_{ij}-1$.
   \end{description}
   \item[Case II.3] $j+1 \notin S$, $j < h(i)$: We need to prove $r'_{ij} = r_{ij}+1$ and $q'_{ij} = q_{ij}$:  
   \begin{description}
    \item[Case II.3.a] $j \in S$: In this case, $r'_{i,j-1} = r_{i,j-1}$ and $m'_j = m_j + 1$. Furthermore, since $j \neq h(i)$, $r_{i,j-1} < k + 1 - j$. This implies that $r'_{ij} = r_{ij} + 1$ and $q'_{ij} = q_{ij}$.
    \item[Case II.3.b] $j \notin S$: Now $r'_{i,j-1} = r_{i,j-1} + 1$, $r_{i,j-1} < k + 1 - j$, and $m'_j = m_j$. This implies that $r'_{ij} = r_{ij} + 1$ and $q'_{ij} = q_{ij}$.
   \end{description}
   \item[Case II.4] $j+1 \notin S$, $j = h(i)$: We have $r_{ij} = k - i$, we need to prove $r'_{ij} = 0$ and $q'_{ij} = q_{ij} + 1$:
   \begin{description}
    \item[Case II.4.a] $j \in S$: We have $j-1<h(i)$, so $r'_{i,j-1} = r_{i,j-1}$ and $m'_j = m_j + 1$. Therefore $r'_{ij} = (r_{i,j-1} + m_j + 1) \md (k+1-j) = 0$ and $q'_{ij} = (r_{i,j-1} + m_j + 1) \dv (k+1-j) = q_{ij}+1$.
    \item[Case II.4.b] $j \notin S$: Again, $j-1<h(i)$, so $r'_{i,j-1} = r_{i,j-1} + 1$, $r_{i,j-1} < k + 1 - j$, and $m'_j = m_j$; therefore $r'_{ij} = (r_{i,j-1} + 1 + m_j) \md (k+1-j) = 0$ and $q'_{ij} = (r_{i,j-1} +1 + m_j) \dv (k+1-j) = q_{ij}+1$.
   \end{description}
   \item[Case II.5] $j+1 \notin S$, $j > h(i)$: We need to prove that $r'_{ij} = r_{ij}$, and that $q'_{ij} = q_{ij}$ if $j > h(i) +1$ and $q'_{ij} = q_{ij}-1$ if $j = h(i) + 1$:
   \begin{description}
    \item[Case II.5.a] $j \in S$: We have $j - 1 > h(i)$, so $r'_{i,j-1} = r_{i,j-1} - 1\geq 0$ and $m'_j = m_j + 1$; therefore $r'_{ij} = r_{ij} $ and $q'_{ij} = q_{ij}$.
    \item[Case II.5.b] $j \notin S$, $j - 1 > h(i)$: Now $r'_{i,j-1} = r_{i,j-1}$ and $m'_j = m_j$, so $r'_{ij} = r_{ij}$ and $q'_{ij} = q_{ij}$.
    \item[Case II.5.c] $j \notin S$, $j - 1 = h(i)$: Now $r_{i,j-1} = k+1-j$, $r'_{i,j-1}=0$, $m'_j = m_j$ and $r'_{ij} = m_j \md (k+1-j) = (r_{i,j-1} + m_j) \md (k+1-j) = r_{ij}$ and $q'_{ij} = q_{ij}-1$.
   \end{description}
  \end{description}
 \end{description}
To prove (b), assume that $J$ is the smallest $j$ for which $j \in S$, $r_{i,j-1} = 0$ for some $i \notin S$, $i < j$. The computations from (a) still hold for $j < J$; in particular, in each line $i < J$ up to $J-1$, we have $q'_{ij} = q_{ij}$ unless $i \in S$ and $j = h(i)$ or $j = h(i)+1$, in which case $q'_{ij}=q_{ij}+1$ or $q'_{ij}=q_{ij}-1$. Furthermore, $h(i)+1 \notin S$, so $J \neq h(i) + 1$. That implies that the total number of parts $\geq k + 3 - J$ in $\lambda^{(k)}$ and $(\lambda^S)^{(k)}$ is the same. There is a unique $I$, $1 \leq I < J$, so that $I \notin S$, $I+1,\ldots,J \in S$. Then $m_j(\lambda^S) = m_j(\lambda)$ for $j = I+1,\ldots,J-1$, and $m_I(\lambda^S) = m_I(\lambda) - 1$ (if $I > 0$). Now if $i \notin S$, $r_{i,J-1} = 0$, then by the minimality of $J$, we must have $r_{ij} > 0$ for $j = I,\ldots,J-2$. Therefore $r'_{ij} = r_{ij} - 1$ and $q'_{ij} = q_{ij}$ for $j = I,\ldots,J-2$, and necessarily $r'_{i,J-1} = k +1 - J$ and $q'_{i,J-1} = q_{i,J-1}-1$. That means that there are fewer copies of $k+2-J$ in $(\lambda^S)^{(k)}$ than in $\lambda^{(k)}$, and this implies $\lambda^{(k)} \not\subseteq (\lambda^S)^{(k)}$.
\end{proof}

\section*{Acknowledgments}
\label{sec:ack}
Many thanks to Susanna Fishel for suggesting a project in $k$-Schur function theory that has led to the definition of residue and quotient tables; to Luc Lapointe for the much needed encouragement and many helpful suggestions; to Mar\' ia Elena Pinto for helping me realize that the conditions from Subsection \ref{toward} can be phrased in such a simple manner; to Jennifer Morse for suggesting looking into products with almost-$k$-rectangles; and to Christopher Hanusa for suggesting finding the size of the core corresponding to a $k$-bounded partition. Kudos also to Jennifer Morse, Anne Schilling, and Mike Zabrocki for the highly recommended introduction to $k$-Schur functions \cite[\S 2]{llmssz}.

\bibliographystyle{alpha}
\bibliography{bib}
\label{sec:biblio}

\end{document}